\setlist[enumerate]{label=(\alph*)} 
\def\a{\alpha} 
\def\b{\beta} 
\def\d{\delta} 
\def\e{\epsilon} 
\def\g{\gamma} 
\newcommand{\C}{\mathbb{C}} 
\newcommand{\Z}{\mathbb{Z}} 
\newcommand{\NN}{\mathbb{N}}
\newcommand{\ideal}[1]{\langle #1 \rangle}
\newcommand{\z}{\zeta}
\newcommand{\vare}{\varepsilon}
\newcommand{\End}{\text{End}}
\newcommand{\coch}[1]{\vare_{#1}^\vee}
\newcommand{\twid}[1]{\widetilde{#1}}
\newcommand{\zz}{\boldsymbol{z}}
\newcommand{\zn}{\boldsymbol{z}^{n_Q\a^\vee}}
\newcommand{\nequiv}{\not\equiv}
\newcommand{\Uv}[1]{U_{\sqrt{v}}(\widehat{\frak{gl}}({#1}))}
\newcommand{\Uq}[1]{U_q(\widehat{\frak{gl}}({#1}))}
\newcommand{\UqF}[1]{U_q^F(\widehat{\frak{gl}}({#1}))}
\newcommand\xrowht[2][0]{\addstackgap[.5\dimexpr#2\relax]{\vphantom{#1}}}
\newtheorem{Thm}{Theorem}[section]		%%with numbering
\newtheorem{Lemma}[Thm]{Lemma} 
\newtheorem{Prop}[Thm]{Proposition} 
\newtheorem{Cor}[Thm]{Corollary}
\newtheorem*{thm}{Main Result}	%%without numbering
\theoremstyle{definition}
\newtheorem{Def}[Thm]{Definition}
\newtheorem*{defn}{Definition}  
\theoremstyle{remark}
\newtheorem{Remark}[Thm]{Remark} 
\newtheorem{Example}[Thm]{Example}
\newcommand{\gammaice}[6]{
%\resizebox{1.75cm}{1.75cm}{\begin{tikzpicture}
\begin{tikzpicture}[scale=0.77, every node/.style={scale=0.8}]
\coordinate (a) at (-.75, 0);
\coordinate (b) at (0, .75);
\coordinate (c) at (.75, 0);
\coordinate (d) at (0, -.75);
\coordinate (aa) at (-.75,.5);
\coordinate (cc) at (.75,.5);
\draw (a)--(c);
\draw (b)--(d);
\draw[fill=white] (a) circle (.25);
\draw[fill=white] (b) circle (.25);
\draw[fill=white] (c) circle (.25);
\draw[fill=white] (d) circle (.25);
\node at (0,1) { };
\node at (a) {$#1$};
\node at (b) {$#2$};
\node at (c) {$#3$};
\node at (d) {$#4$};
\node at (aa) {$#5$};
\node at (cc) {$#6$};
\end{tikzpicture}}
\newcommand{\gamgam}[8]{
%\resizebox{1.75cm}{1.95cm}{\begin{tikzpicture}
\begin{tikzpicture}[scale=0.8, every node/.style={scale=0.8}]
\coordinate (a) at (-.75, -.75);
\coordinate (b) at (-.75, .75);
\coordinate (c) at (.75, .75);
\coordinate (d) at (.75, -.75);
\coordinate (aa) at (-.75, -1.25);
\coordinate (bb) at (-.75, 1.25);
\coordinate (cc) at (.75, 1.25);
\coordinate (dd) at (.75, -1.25);
\draw (a)--(c);
\draw (b)--(d);
\draw[fill=white] (a) circle (.25);
\draw[fill=white] (b) circle (.25);
\draw[fill=white] (c) circle (.25);
\draw[fill=white] (d) circle (.25);
\node at (0,1) { };
\node at (a) {$#1$};
\node at (b) {$#2$};
\node at (c) {$#3$};
\node at (d) {$#4$};
\node at (aa) {$#5$};
\node at (bb) {$#6$};
\node at (cc) {$#7$};
\node at (dd) {$#8$};
\end{tikzpicture}}
\newcommand{\lhs}[9]{\raisebox{-30pt}{\resizebox{5.4cm}{4.5cm}{\begin{tikzpicture}
\path[use as bounding box](-1,-2) rectangle(5,3);
\coordinate (a) at (0,-1);     % 1
\coordinate (b) at (0,-1.5);   % R
\coordinate (c) at (0,1);      % 2
\coordinate (d) at (0,1.5);    % 
\coordinate (e) at (3,2);      % 3
\coordinate (f) at (4,1);      % 4
\coordinate (g) at (4,1.5);    % 
\coordinate (h) at (4,-1);     % 5
\coordinate (i) at (4,-1.5);   % 
\coordinate (j) at (3,-2);     % 6
\coordinate (k) at (2,1);      % 7
\coordinate (l) at (2,1.5);    % 
\coordinate (m) at (2,-1);     % 8
\coordinate (n) at (2,-1.5);   % 
\coordinate (o) at (3,0);      % 9
\coordinate (p) at (1,0);      % T
\coordinate (q) at (3,1);      % 
\coordinate (r) at (3,-1);     % 
\draw (a) to [out=0, in=180] (k);
\draw (k)--(f);
\draw (c) to [out=0, in=180] (m);
\draw (m)--(h);
\draw (e)--(j);
\draw[fill=white] (a) circle (.25);
\draw[fill=white] (c) circle (.25);
\draw[fill=white] (e) circle (.25);
\draw[fill=white] (f) circle (.25);
\draw[fill=white] (h) circle (.25);
\draw[fill=white] (j) circle (.25);
\draw[fill=white] (k) circle (.25);
\draw[fill=white] (m) circle (.25);
\draw[fill=white] (o) circle (.25);
\path[fill=white] (p) circle (.25);
\path[fill=white] (q) circle (.35);
\path[fill=white] (r) circle (.35);
\node at (a) {$#1$};
\node at (c) {$#2$};
\node at (e) {$#3$};
\node at (f) {$#4$};
\node at (h) {$#5$};
\node at (j) {$#6$};
\node at (k) {$#7$};
\node at (m) {$#8$};
\node at (o) {$#9$};
\node at (p) {$\scriptstyle R_{z_i,z_j}$};
\node at (q) {$z_j$};
\node at (r) {$z_i$};
\end{tikzpicture}}}}
\newcommand{\rhs}[9]{\raisebox{-30pt}{\resizebox{5.4cm}{4.5cm}
{\begin{tikzpicture}
\path[use as bounding box](-1,-2) rectangle(5,3);
\coordinate (a) at (0,-1); %1
\coordinate (b) at (0,1);  %2
\coordinate (c) at (1,2);  %3
\coordinate (d) at (4,1);  %4
\coordinate (e) at (4,-1); %5
\coordinate (f) at (1,-2); %6
\coordinate (g) at (2,-1); %7
\coordinate (h) at (2,1);  %8
\coordinate (i) at (1,0);  %9
\coordinate (r) at (3,0);  %R
\coordinate (s) at (1,-1);  %R
\coordinate (t) at (1,1);  %R
\draw (a)--(g) to [out=0, in=180] (d);
\draw (b)--(h) to [out=0, in=180] (e);
\draw (c)--(f);
\draw[fill=white] (a) circle (.25);
\draw[fill=white] (b) circle (.25);
\draw[fill=white] (c) circle (.25);
\draw[fill=white] (d) circle (.25);
\draw[fill=white] (e) circle (.25);
\draw[fill=white] (f) circle (.25);
\draw[fill=white] (g) circle (.25);
\draw[fill=white] (h) circle (.25);
\draw[fill=white] (i) circle (.25);
\path[fill=white] (r) circle (.25);
\path[fill=white] (s) circle (.35);
\path[fill=white] (t) circle (.35);
\node at (a) {$#1$};
\node at (b) {$#2$};
\node at (c) {$#3$};
\node at (d) {$#4$};
\node at (e) {$#5$};
\node at (f) {$#6$};
\node at (g) {$#7$};
\node at (h) {$#8$};
\node at (i) {$#9$};
\node at (r) {$\scriptstyle R_{z_i,z_j}$};
\node at (s) {$z_j$};
\node at (t) {$z_i$};
\end{tikzpicture}}}}
\newcommand{\botcharge}[2]{\raisebox{-29pt}{$\begin{array}{cc}#1\\#2\end{array}$}}
\newcommand{\topcharge}[2]{\raisebox{23pt}{$\begin{array}{cc}#2\\#1\end{array}$}}
\title{Yang-Baxter Equations for General Metaplectic Ice}
\author{Claire Frechette}
\date{\today}
\begin{document}

\maketitle

\begin{abstract}
In this paper, we extend results connecting quantum groups to spherical Whittaker functions on metaplectic covers of $GL_r(F)$, for $F$ a nonarchimedean local field.  Brubaker, Buciumas, and Bump showed that for a certain metaplectic $n$-fold cover of $GL_r(F)$ a set of Yang-Baxter equations model the action of standard intertwiners on principal series Whittaker functions. These equations arise from a Drinfeld twist of the quantum affine Lie superalgebra $\Uv{n},$ where $v = q^{-1}$ for $q$ the cardinality of the residue field. We extend their results to all metaplectic covers of $GL_r(F)$, providing new solutions to Yang-Baxter equations matching the scattering matrix for the associated Whittaker functions. Each cover has an associated integer invariant $n_Q$ and the resulting solutions are connected to the quantum group $\Uv{n_Q}$ and quantum superalgebra $\Uv{1|n_Q}$.
\end{abstract}

\section{Introduction}\label{intro}

%"It's slightly unsatisfactory that we can't tell the reader how to solve all the problems of the universe."

The Whittaker functions for principal series representations of reductive groups over local fields play an integral role in number theory and representation theory. Among other interesting connections, they appear in the computations of local $L$-factors of integral representations of automorphic forms at unramified places, in the computation of Fourier coefficients of Eisenstein series, and in the Langlands-Shahidi method. Fascinatingly, these applications persist in the \emph{metaplectic case}, where we consider representations on metaplectic covers of a $p$-adic reductive group, despite the fact that global Fourier-Whittaker coefficients are no longer Euler products in this case. The explicit computation of these coefficients has been a source of important applications.
For example, in \cite{BFHeisAnnals}, Bump, Friedberg, and Hoffstein study Eisenstein series on metaplectic covers of $GSp_4$ over a global field in order to prove non-vanishing results for twisted $L$-functions with applications to the Birch Swinnerton-Dyer Conjecture. Additionally, Eisenstein series on a double cover of $GL_6$ appeared in Chinta's work \cite{Chintabiquad} on mean values of biquadratic zeta functions. 

Returning to the local setting, let $F$ be a nonarchimedean local field. In the algebraic case (i.e., non-metaplectic case), the spherical Whittaker function for an unramified principal series representation is unique up to scalar multiplication, and its values were computed by Shintani \cite{Shintani} for $GL_r(F)$ and Casselman and Shalika \cite{CassShal} in general. Casselman and Shalika's method relied on computing the effect of intertwining operators for these Whittaker models, which boils down to the computation of a  scalar, more precisely a rational function in the Langlands parameters for the representation. In the metaplectic case, we must require that $F$ contains suitably many roots of unity. In this setting, Whittaker functionals for metaplectic principal series are rarely unique, and thus intertwining operators act in a more complicated way; for covers of $GL_r(F)$, Kazhdan and Patterson \cite{KP} determined the effect of intertwining operators on a natural basis of Whittaker functionals, whose structure constants we call the \emph{scattering matrix coefficients}. These scattering matrix coefficients were a key ingredient in their investigation of generalized theta series, as well as the linchpin in the construction of a metaplectic Casselman-Shalika formula by Chinta and Offen \cite{COcassshal} for covers of $GL_r(F)$ and McNamara \cite{McNCS} for general reductive groups.

More recently, Brubaker, Buciumas, and Bump have made connections between metaplectic Whittaker functions and quantum groups \cite{BBBIce} that equate the scattering matrices of certain Whittaker functions to Drinfeld twists of $R$-matrices arising from affine quantum groups. This observation was made by connecting Whittaker functions to the six-vertex lattice model, and was proven for one specific $n$-fold metaplectic cover of $GL_r(F)$ over a $p$-adic field $F$ containing the $2n$-th roots of unity. Furthermore, the associated lattice model was shown to be solvable using the Drinfeld twist of an $R$-matrix on a quantum affine superalgebra, a particular kind of quasitriangular Hopf algebra with an additional graded structure. Applying a Drinfeld twist to such a structure preserves its quasitriangularity, i.e. preserves the existence of a universal $R$-matrix. 

In this paper, we prove that analogous results hold for any metaplectic cover of $GL_r(F)$ with $F$ as above. Using a similar connection to lattice models, we express Whittaker functions on any metaplectic cover as the partition function of a solvable lattice model, which then can be viewed as a model for modules of a quantum group. There are two major hurdles in extending the lattice model phenomenon to general covers: first, the Whittaker functions depend on a set of factors of $n$ controlled by the choice of cover of $G$, so we must rework the Boltzmann weights of the ice model in \cite{BBBIce} to reflect this dependance. Second, the Whittaker function coefficients rely on a family of Gauss sums which we must incorporate into the model and show to be permissible factors in the Drinfeld twist procedure in order to produce a solvable lattice model. Both of these components affect the resulting quantum group, and will be important considerations in extending this work to covers of any other reductive groups. 
%For our purposes, we need the precise computation of the universal $R$-matrix provided by a particular Drinfeld twist of $\Uv{1|n_Q}$, which we will address in Section \ref{Drin}.

Additionally, we note that many of the results in \cite{BBBIce} rely on previous results which also only consider the case of that specific $n$-fold metaplectic cover, such as those in \cite{McNcrystal}, so we must also modify these results for generic metaplectic covers as well. Broadening these findings to the more general case shows that this approach is much more widely applicable than previously known -- for instance, many choices of covers allow us to weaken the conditions on our field $F$ to contain only the $n$-th roots of unity -- and suggest that one could attempt to push these methods further to remove such dependencies. 

Let $G = GL_r(F)$ be a split reductive group over a local field $F$. For any positive integer $n$, Brylinski and Deligne \cite{BD} associate a family of $n$-fold metaplectic covers of any split reductive group to a quadratic form $Q$ with certain nice properties; let $\twid{G}$ denote the associated metaplectic cover of $G$ and let $n_Q = n/\gcd(n, Q(\a^\vee))$, where $\a^\vee$ is any simple coroot. We will see that this integer $n_Q$ controls many of the arithmetic properties of the cover. The construction of these covers is considered in detail in Section \ref{metchar} and explicitly parametrized for our choice of $G$. The principal series representations for $\twid{G}$ are indexed by characters $\chi_{\boldsymbol{z}}$ of a subgroup of the metaplectic torus $\twid{T}$, where ${\boldsymbol{z}}$ denotes the Langlands parameter of the character; using $\chi_{\boldsymbol{z}}$, we define an unramified principal series representation $\pi_{\boldsymbol{z}}:= \pi_{\chi_{\boldsymbol{z}}}$ with a vector space $\frak{W}^{\boldsymbol{z}}:=\frak{W}^{\chi_{\boldsymbol{z}}}$ of spherical Whittaker functions; see Section \ref{metwhit} for the details of these constructions. Unlike in the algebraic case, this space of metaplectic Whittaker functions is not usually one-dimensional, but is still finite dimensional, as computed by Kahzdan and Patterson \cite{KP}. 

\begin{thm} For any choice of $n$ and $Q$ defining a metaplectic cover as above, there exists a solvable lattice model whose partition function equals the values of Whittaker functions $\mathcal{W}_\g^{\boldsymbol{z}}$ in an explicit basis $\{\g\}$ of the space of Whittaker functions $\frak{W}^{\boldsymbol{z}}$. The quantum group associated to the $R$-matrix is the quantum affine Lie superalgebra $U_{\sqrt{v}}(\widehat{\frak{gl}}(1|n_Q))$.\end{thm}

To state this result precisely, we break it into its major components: the lattice model is given in Section \ref{metice}, and Theorems \ref{ybe1}, \ref{ybe2}, and \ref{ybe3} prove its solvability. In Section \ref{Drin}, we explore the $R$-matrices for evaluation modules of the quantum superalgebra $U_{\sqrt{v}}(\widehat{\frak{gl}}(1|n_Q))$ and prove that the weights on the ice model come from a Drinfeld twist on the $R$-matrix of this superalgebra. Sections \ref{whitcalcs} and \ref{icewhit} calculate the Whittaker function and prove that the partition function produces the same values, culminating in Theorems \ref{BIGTHM} and \ref{indivWhit}, which give an explicit normalization for this connection.

Section \ref{pfmain} is reserved for proving that this connection between Whittaker functions and lattice models respects the intertwining operators on the space of Whittaker functions. This is the second main result of the paper, but to state it precisely we first must introduce some notation. For a simple reflection $s_i$ in the Weyl group of $G$, let $\overline{\mathcal{A}_{s_i}}: \pi_{\boldsymbol{z}} \rightarrow \pi_{s_i\boldsymbol{z}}$ denote the normalized intertwining operator on metaplectic principal series, defined precisely in Section \ref{metwhit}. As studied by Kazhdan and Patterson \cite{KP}, these intertwiners induce maps $\frak{W}^{\boldsymbol{z}} \rightarrow \frak{W}^{s_i\boldsymbol{z}}$ on the spaces of Whittaker functions. Let $U:=U_{\sqrt{v}}(\widehat{\frak{gl}}(n_Q))$ be the quantized enveloping algebra of the untwisted affine Lie algebra $\widehat{\frak{gl}}(n_Q)$, the central extension of the loop algebra of $\frak{gl}(n_Q)$. While it is standard to denote this algebra using $U_q$, we reserve $q$ for the cardinality of the residue field of $F$, and we set $v = q^{-1}$. For $V, W$ vector spaces, let $\tau = \tau_{V,W}: V\otimes W \rightarrow W\otimes V$ be the standard swapping map $v\otimes w \mapsto w\otimes v$. For any $U$-modules $V$ and $W$, there exists an $R$-matrix $R_{V,W} \in \End(V\otimes W)$ such that $\tau R_{V,W}: V\otimes W \rightarrow W\otimes V$ is a module homomorphism. We discuss the precise form of these $R$-matrices for specific choices of $V$ and $W$ in Sections \ref{metice} and \ref{Drin}; in particular, the $R$-matrices we consider satisfy parametrized Yang-Baxter equations (see Jimbo \cite{Jimbo}, or Frenkel and Reshetikhin \cite{FR}).  For a complex parameter $z$, let $V_+(z)$ be the $n_Q$-dimensional evaluation module of $\frak{gl}(n)$, which is a $U$-module with basis $v_\a(z)$. We can expand the $R$-matrix $R_{z_i,z_j}$ on the space $V(z_i)\otimes V(z_j)$ in terms of our basis elements according to the formula
\[ R_{z_i,z_j} \left( v_\a(z_i)\otimes v_\b(z_j)\right) = \sum_{\g,\d} R^{\g,\d}_{\a,\b} (z_i,z_j) \cdot v_\g(z_i) \otimes v_\d(z_j)\]
where $R^{\g,\d}_{\a,\b}(z_i,z_j)$ are rational functions in complex parameters $z_i$ and $z_j$.

\begin{Thm}\label{maintheorem}
Let $\twid{G}$ be any metaplectic cover of $GL_r(F)$, with corresponding quadratic form $Q$. There exists a Drinfeld twist of the $R$-matrix for $U_{\sqrt{v}}(\widehat{\frak{gl}}(n_Q))$ and a homomorphism $\theta_{\boldsymbol{z}}$ from the space $\frak{W}^{\boldsymbol{z}}$ of Whittaker functions with $\boldsymbol{z}:=(z_1,...,z_r)$ to the vector space $V_+(z_1)\otimes \cdots\otimes V_+(z_r)$ such that the following diagram commutes:
\begin{center}
\begin{tikzcd}
\frak{W}^{\boldsymbol{z}} \arrow[r,"\theta_{\boldsymbol{z}}"] \arrow[d,"\overline{\mathcal{A}}_{s_i}" left]& V_+(z_1) \otimes \cdots \otimes V_+(z_i) \otimes V_+(z_{i+1}) \otimes \cdots \otimes V_+(z_r)\arrow[d, "\left(\tau R_{z_i,z_{i+1}}\right)_{i,i+1}"]\\
\frak{W}^{s_i\boldsymbol{z}} \arrow[r,"\theta_{s_i\boldsymbol{z}}"] &V_+(z_1) \otimes \cdots \otimes V_+(z_{i+1}) \otimes V_+(z_i) \otimes \cdots \otimes V_+(z_r)
\end{tikzcd}
\end{center}
where $(\tau R_{z_i,z_{i+1}})_{i,i+1}$ applies the map $\tau R_{z_i,z_{i+1}}$ to the $i$th and $(i+1)$st tensor components and the identity map to the remaining ones.
\end{Thm}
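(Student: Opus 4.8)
The plan is to construct $\theta_{\boldsymbol{z}}$ directly from the solvable lattice model of the first theorem and then verify commutativity of the square by matching Boltzmann weights against the twisted $R$-matrix. The states of that model can be arranged so that each boundary condition records a decoration $\g_i$ on each of the $r$ columns, with $\g_i$ ranging over an index set of size $n_Q$, so I would define $\theta_{\boldsymbol z}$ on basis elements by $\mathcal{W}^{\boldsymbol z}_\g \mapsto v_{\g_1}(z_1)\otimes\cdots\otimes v_{\g_r}(z_r)$. Since $\{\mathcal{W}^{\boldsymbol z}_\g\}$ is a basis of $\frak{W}^{\boldsymbol z}$ (Kazhdan--Patterson \cite{KP}) and the pure tensors $v_{\g_1}(z_1)\otimes\cdots\otimes v_{\g_r}(z_r)$ form a basis of $V_+(z_1)\otimes\cdots\otimes V_+(z_r)$ — the dimensions agree because $\dim V_+ = n_Q$ — the map $\theta_{\boldsymbol z}$ is a vector space isomorphism, and the whole content of the theorem is that it intertwines $\overline{\mathcal A}_{s_i}$ with $(\tau R_{z_i,z_{i+1}})_{i,i+1}$.

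For the right-hand vertical arrow I would use the explicit Drinfeld twist of the $R$-matrix for $\Uv{n_Q}$ computed in Section \ref{Drin}: written out entrywise, the numbers $R^{\g,\d}_{\a,\b}(z_i,z_{i+1})$ are the Boltzmann weights of a solvable six-vertex model, solvability being exactly the parametrized Yang--Baxter equation, which survives the twist because a Drinfeld twist preserves quasitriangularity (cf.\ \cite{BBBIce}). For the left-hand arrow I would invoke the Kazhdan--Patterson description \cite{KP} of the action of $\overline{\mathcal A}_{s_i}$ on $\frak{W}^{\boldsymbol z}$ by the scattering matrix. Because the standard intertwiner attached to a simple reflection factors through the rank-one parabolic, its scattering matrix involves only the variables $z_i, z_{i+1}$ and the decorations $\g_i, \g_{i+1}$, i.e.\ it acts as the identity on all tensor slots other than the $i$th and $(i+1)$st; this reduces the statement to a rank-two computation. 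There one must show that the $\widetilde{GL}_2$ scattering matrix of \cite{KP} — re-expressed via the metaplectic Casselman--Shalika formula \cite{COcassshal} and the extension of McNamara's crystal description \cite{McNcrystal} to an arbitrary cover carried out earlier in the paper — coincides, in the chosen basis, with $\tau R_{z_i,z_{i+1}}$.

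I would carry out the identification by the usual train (zipper) argument: adjoin to the lattice of the first theorem one extra vertex carrying the $R_{z_i,z_{i+1}}$ weights on the $i$th and $(i+1)$st strands, and slide it through the lattice using the Yang--Baxter equation. On one side this swaps the spectral parameters $z_i\leftrightarrow z_{i+1}$ and re-expresses the partition function in the basis $\{\mathcal{W}^{s_i\boldsymbol z}_{\g'}\}$, with transition coefficients equal to the entries $R^{\g',\d'}_{\g_i,\g_{i+1}}(z_i,z_{i+1})$; on the other side it reproduces exactly the Kazhdan--Patterson scattering matrix, provided the two sets of Boltzmann weights are checked to agree. Commutativity of the square then follows, since both composites send $\mathcal{W}^{\boldsymbol z}_\g$ to $\sum_{\g',\d'} R^{\g',\d'}_{\g_i,\g_{i+1}}(z_i,z_{i+1})\, v_{\g_1}(z_1)\otimes\cdots\otimes v_{\g'}(z_{i+1})\otimes v_{\d'}(z_i)\otimes\cdots\otimes v_{\g_r}(z_r)$.

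The main obstacle — present already in \cite{BBBIce} but sharpened by working with a general cover — is the weight matching underlying the previous paragraph. The Kazhdan--Patterson scattering coefficients are a priori built from Gauss sums and the quadratic-form data $Q(\a^\vee)$ of the full $n$-fold cover, and one must prove that they collapse to quantities depending only on $n_Q = n/\gcd(n,Q(\a^\vee))$, so that they can be realized by the rank-$n_Q$ quantum group rather than the rank-$n$ one, and that after the particular Drinfeld twist of Section \ref{Drin} all normalizations (the $\sqrt v$ conventions, and the bound-versus-free vertex conventions of the model) line up exactly. I expect the crux to be the reworking of the crystal-theoretic evaluation of metaplectic Whittaker values for general covers: that is where the invariant $n_Q$ first appears and where the possibly cover-dependent Gauss sums must be computed, so that is where the argument either goes through cleanly or requires genuinely new input beyond \cite{McNcrystal}.
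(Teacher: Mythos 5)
Your strategy is recognizable and workable, but two things deserve comment: one is a genuine gap, the other is a difference in route from the paper.

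\textbf{The gap.} Your definition of $\theta_{\boldsymbol z}$ as the naive basis identification $\mathcal{W}^{\boldsymbol z}_\g \mapsto v_{\g_1}(z_1)\otimes\cdots\otimes v_{\g_r}(z_r)$ will not make the square commute. The paper's map is $\theta_{\boldsymbol z}(\mathcal{W}^{\boldsymbol z}_\nu)=\boldsymbol z^{-\nu}\,v_{\boldsymbol c}$ with $\nu=-\boldsymbol c+\rho$, and this $\boldsymbol z$-dependent scalar is not cosmetic. The scattering coefficients of Proposition \ref{TAU} carry powers of $\boldsymbol z$ like $\boldsymbol z^{n_Q\lceil B(\a^\vee,\mu)/(n_QQ(\a^\vee))\rceil\a^\vee-\frac{B(\a^\vee,\mu)}{Q(\a^\vee)}\a^\vee}$ and $\boldsymbol z^{-\a^\vee}$, and they match the $R$-vertex weights of Figure \ref{modR} only after multiplying by the normalizing factors $\boldsymbol z^{(c_i-c_{i+1}-1)\a_i^\vee}$ and $\boldsymbol z^{-\a_i^\vee}$; that is the precise content of Proposition \ref{tauR}. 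These factors arise exactly as $(s_i\boldsymbol z)^\nu\boldsymbol z^{-\nu}$ and $(s_i\boldsymbol z)^\mu\boldsymbol z^{-\nu}$, which is to say they are absorbed by the $\boldsymbol z^{-\nu}$ in $\theta_{\boldsymbol z}$ and $(s_i\boldsymbol z)^{-\nu},(s_i\boldsymbol z)^{-\mu}$ in $\theta_{s_i\boldsymbol z}$. Without the twist on $\theta$, your two composites differ by nonconstant monomials in $\boldsymbol z$ in each matrix entry. Relatedly, your claim that dimensions agree ``because $\dim V_+=n_Q$'' takes for granted that $|\Gamma|=n_Q^r$; as Remark \ref{Dhilloncalc} indicates, $|\twid T/H|$ is not in general $n_Q^r$, so this needs care rather than assertion.

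\textbf{The different route.} You propose to prove commutativity by a train (zipper) argument through the lattice model. That argument does appear in the paper, but as the proof of the corollary in Section \ref{icewhit}, which recovers the functional equation $Z(\frak S_{\lambda,s_i\boldsymbol z};\boldsymbol c)$ from the Yang--Baxter equation. The proof of Theorem \ref{maintheorem} itself bypasses the lattice model entirely: once Proposition \ref{tauR} is in hand, the paper simply traces $\mathcal{W}^{\boldsymbol z}_\nu$ both ways around the square and matches coefficients, splitting into the cases $\nu\neq s_i\nu+\a_i^\vee$ (two nonzero terms on each side) and $\nu=s_i\nu+\a_i^\vee$ (one term, requiring the identity $\tau^1_{\nu,\nu}+\tau^2_{\nu,\nu}=\boldsymbol z^{-\a^\vee}R^{c_i,c_i}_{c_i,c_i}$). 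The paper even flags this bypass as ``remarkable,'' since it suggests the theorem is not logically dependent on the existence of a solvable model. Your route would work, provided you first establish both Theorem \ref{indivWhit} (matching partition functions with Whittaker values, including the charge bookkeeping and the $\boldsymbol z^{-\overline N+w_0\rho+\boldsymbol c}$ normalization) and Theorem \ref{ybe1}; the paper's direct route replaces all of that machinery with the single algebraic identity of Proposition \ref{tauR}. You are also correct that the crux is showing the Kazhdan--Patterson coefficients depend only on $n_Q$ after the right bookkeeping, and that the crystal computation of McNamara must be extended to general covers; that is exactly the content of Theorem \ref{crystalsum}, and Proposition \ref{tauR} is where the collapse to $n_Q$-periodic Gauss sums and $\zn$ is made explicit.
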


Note that although the solvable lattice models provide the bridge to connect Whittaker functions to quantum groups, they do not appear in the final form of Theorem \ref{maintheorem}, and recourse to the lattice model is not necessary for the proof of this theorem. This bypass is remarkable, and suggests that the connection between Whittaker functions on metaplectic covers and quantum groups might extend to other reductive groups.

\textbf{Acknowledgements}: This work was supported by NSF grant DMS-1745638. The author thanks Ben Brubaker for his mentorship and support, as well as Dan Bump, Dihua Jiang, and Pavlo Pylyavskyy for helpful comments. Finally, we thank Gurbir Dhillon for making us aware of the computation (\ref{slrcounteqn}) and related facts about the classification of covers.

\section{Characterization of Metaplectic Covers of $GL_r$}\label{metchar}

Let $G := GL_r(F)$, where $F$ is a non-archimedean local field with ring of integers $\frak{o}$ and uniformizer $\varpi$. Let $q$ be the cardinality of the residue field $\frak{o}/\varpi\frak{o}$ and $n$ a positive integer such that $q\equiv 1\pmod{2n}$, so $F$ contains the 2n-th roots of unity $\mu_{2n}$. Let $T := T(F)$ be a split maximal torus of $G$ and $Y:=\text{Hom}(\mathbb{G}_m,T)$ be the group of cocharacters of $T$. We also fix an embedding $\e: \mu_n \rightarrow\C^\times$.

Metaplectic central extensions $\widetilde{G}$ of $G$ by $\mu_n$ are given by exact sequences of the form
\[1 \rightarrow \mu_n \rightarrow \widetilde{G}\xrightarrow{p} G\rightarrow 1,\]
and are in correspondence with symmetric $W$-invariant bilinear forms $B:  Y\times Y \rightarrow \Z$ for which the associated quadratic form $Q(\a^\vee) := B(\a^\vee,\a^\vee)/2$ has $Q(\a^\vee) \in \Z$ for all coroots $\a^\vee$ \cite{BD}. Equivalently, the metaplectic groups $\twid{G}$ may be thought of as the set $G\times \mu_n$ with multiplication given by a cocycle $\sigma \in H^2(G,\mu_n)$. This construction has been well studied in various levels of generality by Kazhdan and Patterson \cite{KP}, Matsumoto \cite{Matsumoto}, Brylinski-Deligne \cite{BD}, McNamara \cite{McNrep}, Gan, Gao, and Weissman \cite{GanGao, Ast}, and others. In this paper, we consider the metaplectic covers of $GL_r(F)$, which were originally enumerated by Kazhdan and Patterson \cite{KP}, and we loosely follow the construction of McNamara \cite{McNrep}.

\begin{defn}
A \emph{$n$-fold metaplectic cover} $\twid{G}$ of $G$ is a non-algebraic central extension given by the short exact sequence
\[1 \rightarrow \mu_n \rightarrow \widetilde{G}\xrightarrow{p} G\rightarrow 1\]
or more generally, a non-algebraic central extension given by the short exact sequence
\[1 \rightarrow \mu_m \rightarrow \widetilde{G}\xrightarrow{p} G\rightarrow 1\]
with $n|m$, such that the corresponding cocycle $\sigma \in H^2(G,\mu_m)$ is \emph{essentially of degree $n$}, in that $[\sigma^n]$ becomes trivial in $H^2(G,\C^\times)$ under the inclusion induced by an embedding $\vare: \mu_m \rightarrow \C^\times.$
\end{defn}

For $n$-fold metaplectic covers of $GL_r(F)$, we can write down explicit formulas for their associated cocycles and bilinear forms. As in Section 5 of \cite{BBBFhecke}, we slightly modify McNamara's set-up by considering $n$-fold covers that are \emph{essentially of degree n} as well as ``purely" $n$-fold covers; this shift allows us to include the important example of the metaplectic cover corresponding to the Euclidean inner product for the standard identification of $Y$ with $\Z^r$. In fact, we will show that to accommodate the metaplectic cover corresponding to any symmetric $W$-invariant bilinear form on the cocharacter lattice, it suffices to consider $m = 2n$.

By \cite{McNrep}, a cocycle $\sigma \in H^2(G,\mu_n)$ restricts to coroot spaces $X_{\a^\vee}$, which are isomorphic to $SL(2,F)$. For each coroot $\a^\vee$, this restriction gives a cocycle in $H^2(SL(2,F),\mu_n)$, which is the image of the integer $Q(\a^\vee)$ under the map
\[ \xi: \Z \rightarrow H^2(SL(2,F),\mu_n).\]
For metaplectic extensions of $SL_r(F)$, this set of values $Q(\a^\vee)$ for positive simple coroots $\a^\vee$ completely determines the cocycle $\sigma$; that is, if two bilinear forms $B, B'$ have $Q(\a^\vee) = Q'(\a^\vee)$ for all positive simple coroots $\a$, they correspond to equivalent central extensions. For metaplectic extensions of $GL_r(F)$, we need additional data to specify the cocycle completely, but these values $Q(\a^\vee)$ still play an important part in our calculations.

For $GL_r(F)$, we can give explicit formulas for cocycles corresponding to $n$-fold metaplectic covers of $G$. Formulas for these cocycles on arbitrary elements of $G$ are rather complicated, but restricting to $T$, they become quite elegant. Fortunately, we may work in the other direction as well: we begin by examining a set of cocycles defined on the torus, and then extend them up to $G$ and show that this set of cocycles on $G$ suffices to give us any $n$-fold metaplectic cover of $G$. 

Let $( \cdot,\cdot)_{2n}$ be the $2n$-th Hilbert symbol (see Neukirch \cite{Neukirch}), and for $k|2n$, let $(\cdot,\cdot)_{k} = (\cdot,\cdot)_{2n}^{2n/k}$. For simplicity, when $k=n$, we will denote $(\cdot,\cdot)_n$ simply as $(\cdot,\cdot)$. 
Then we define $\sigma_{b,c} \in H^2(T,\mu_{2n})$ to be the cocycle given on the torus by the formula below; that is, for $\boldsymbol{x},\boldsymbol{y}\in T$, 
\begin{align}\label{cocycle}
\sigma_{b,c}(\boldsymbol{x},\boldsymbol{y}) = (\det(\boldsymbol{x}),\det(\boldsymbol{y}))_{2n}^c \prod_{i>j} (x_i,y_j)^{-b}
\end{align}  
for some $b,c \in \Z$, where $\boldsymbol{x} = \text{diag}(x_1,...,x_n)$. Note that this cocycle is a slight modification of the ones used by Kazhdan and Patterson \cite{KP}; like them, we use Matsumoto's theorem for $SL(n+1)$ to extend our cocycle from $T$ to $G$. Some choices of $b,c$ may give equivalent metaplectic extensions; we will explain later in this section when we may or may not determine this overlap. For precise computations of the cocycle on arbitrary elements of $\twid{G}$, we refer the reader to Sections 4 and 5 of \cite{BLS}.

\begin{Prop}\label{alltheforms}
The cocycle $\sigma_{b,c}$ on $G$ generates an $n$-fold metaplectic cover $\twid{G}$, and we may describe its associated bilinear form explicitly in terms of $b,c\in \Z$. That is, for $\boldsymbol{x},\boldsymbol{y}\in \twid{T}$ such that $p(\boldsymbol{x}) = x^\lambda$ and $p(\boldsymbol{y}) = y^\mu$ for $x,y\in F^\times$ and $\lambda,\mu\in Y$, we have
\begin{align}\label{commutator} [\boldsymbol{x}, \boldsymbol{y}] = (x,y)^{B(\lambda,\mu)},\end{align}
where, under the standard identification of $Y$ with $\Z^r$, we may express $B$ as the matrix (\ref{Bmatrix}). Furthermore, any such symmetric $W$-invariant bilinear form $B$ takes this form for some $b,c\in \Z$ and may thereby be linked back to a cocycle $\sigma_{b,c}$.
\end{Prop}

\begin{proof}
For $G:= GL_r(F)$, the torus $T$ is the subgroup of diagonal matrices, with cocharacter group $Y$ generated by the $r$ fundamental coweights $\vare_i^\vee: F^\times \rightarrow T$, given by $\vare_i^\vee(a) = \text{diag}(1,...,1, a,1,...,1)$, where $a$ is in the $i$-th entry. There are $r-1$ simple coroots: $\coch{1} - \coch{2}, \coch{2} -\coch{3},...,\coch{r-1} - \coch{r}$. In keeping with the usual notation, for $\lambda \in Y$ and $a\in F^\times,$ we set $a^\lambda:=\lambda(a)$. 

Regarding the second statement, since $Y$ has a finite basis, we can express a bilinear form $B: Y \times Y \rightarrow \Z$ in the form
\[ B(\a,\b) = \boldsymbol{x}^T A\boldsymbol{y}\]
for some matrix $A$, where $\a, \b$ correspond to the vectors $\boldsymbol{x},\boldsymbol{y}$, respectively. In our case, $B$ is symmetric, so $A$ must be a symmetric matrix. Furthermore, since the Weyl group $W$ associated to $G$ is isomorphic to the symmetric group $S_r$, the $W$-invariance of $B$ determines that $A$ must be invariant under conjugation by permutation matrices. Thus, the matrix $A = (a_{i,j})$ associated to a symmetric $W$-invariant bilinear form $B$ must have $a_{i,i} = c$ for all $i$ and $a_{i,j} = d$ for all $i\neq j$ for some pair $c,d\in \Z$. For our purposes, we will re-parametrize slightly: let $b = c-d \in \Z$. Let $B_{b,c}$ be the bilinear form corresponding to this parametrization. That is, the matrix $A$ corresponding to $B_{b,c}$ is of the form below,
\begin{align}\label{Bmatrix} A = \begin{bmatrix} c &  & \cdots &c-b \\
&c& &\vdots\\
\vdots&&\ddots&\\
c-b&\cdots&&c
\end{bmatrix}.\end{align}
Abusing notation, we will frequently conflate $B$ with $A$ and refer to the bilinear form as its associated matrix.

For the first statement, let $\sigma_{b,c}$ be the cocycle given on $T$ by Equation \ref{cocycle}. We can calculate the commutator of any two elements $\boldsymbol{x} = \text{diag}(x_1,...,x_r), \boldsymbol{y} = \text{diag}(y_1,...,y_r) \in T$ directly using our cocycle: 
\begin{align}
[\boldsymbol{x},\boldsymbol{y}] = \frac{\sigma_{b,c}(\boldsymbol{x}, \boldsymbol{y})}{\sigma_{b,c}(\boldsymbol{y},\boldsymbol{x})} = \prod_{i} (x_i,y_i)^c \prod_{i\neq j} (x_i,y_j)^{c-b}. \label{cocyclecalc}
\end{align}
In the event that $\boldsymbol{x} = x^\lambda, \boldsymbol{y} = y^\mu$ for some $x,y\in F^\times, \lambda,\mu \in Y$, we then have that $x_i= x^{\lambda_i}$ and $y_i=y^{\mu_i}$ for all $i$, yielding the commutator
\begin{align*}
[x^\lambda,y^\mu] &= \prod_{i} (x,y)^{\lambda_i\mu_ic} \prod_{i\neq j} (x,y)^{\lambda_i\mu_j(c-b)}\\
&= (x,y)^{\sum_{i= 1}^r \lambda_i\mu_ic + \sum_{i\neq j} \lambda_i\mu_j(c-b)}\\
& = (x,y)^{B_{b,c}(\lambda,\mu)}. 
\end{align*}
Thus, the central extension generated by $\sigma_{b,c}$ corresponds to $B_{b,c}$ under equation \ref{commutator} as desired. 

It remains to check that $[\sigma_{b,c}^n]$ is trivial in $H^2(G,\C^\times)$. Using Equation \ref{cocycle}, the cocycle $\sigma^n_{b,c}$ is given by
\[ (g,h) \mapsto (\det(g),\det(h))_2^c.\]
We construct a section $\gamma: G\rightarrow \C^\times$ that splits $\sigma_{b,c}^n$: for $g\in G$, we have $\det(g) \in F^\times$, and for any $a\in F^\times,$ we have a quadratic form $x^2 - ay^2$ on $F^2$. In \cite{Weil}, Weil constructed a map $\gamma$ from the space of quadratic forms to $\mu_8\subset \C^\times$. Define $\g_0(a) := \g(x^2 - ay^2)^c$. Then by the last formula on page 176 of \cite{Weil}, we have
\[ \frac{\g_0(a)\g_0(b)}{\g_0(ab)} = (a,b)_2^c.\]
Thus, the section $g\mapsto \g_0(\det(g))$ splits our cocycle in $H^2(G,\mu_8)$ and therefore in $H^2(G,\C^\times)$ as well.
\end{proof}

\begin{Remark} Many bilinear forms give precisely the same metaplectic cover, and many more give (morally) equivalent covers. Since Hilbert symbols $(\cdot,\cdot)_m$ give $m$-th roots of unity, bilinear forms $B_{b,c}$ and $B_{b',c'}$ with $b\equiv b' \pmod{n}$ and $c\equiv c'\pmod{2n}$ will give exactly the same cocycle, and thus the same metaplectic cover. Thus we may restrict our attention to $b\in \Z/n\Z$ and $c\in\Z/2n\Z$, giving us $2n^2$ distinct cocycles $\sigma_{b,c}$, some of which give equivalent metaplectic extensions. Let $\vare: \mu_{2n} \rightarrow \C^\times$ be any embedding. According to \cite{KP}, we can distinguish the metaplectic coverings $\twid{G}_j$ given by two cocycles $\sigma_{b_j,c_j}$ for $j = 1,2$ precisely when the induced extensions
\[ 1\rightarrow \C^\times \rightarrow \twid{G}_{j,\vare} \rightarrow G \rightarrow 1\]
are inequivalent, which happens when $2(c_1-c_2)\nequiv 0 \pmod{n}$ or $b_1\nequiv b_2 \pmod{n}$ and does not depend on the choice of embedding. Furthermore, notice that for $B_{b,c}$, the associated quadratic form $Q_{b,c}$ gives $Q_{b,c}(\a^\vee) = b$ for any simple coroot $\a^\vee$. Recall that a metaplectic extension of $SL_r(F)$ is completely determined by these values $Q(\a^\vee)$. Thus, we have that $B_{b,c}$ and $B_{b,b}$ give equivalent extensions of $SL_r(F)$. Since many of our later calculations will actually occur in $SL_r(F) \subset GL_r(F)$ or depend only on these values $Q(\a^\vee)$, for the purposes of this paper we may think of $B_{b,c}$ and $B_{b,b}$ as morally equivalent extensions for any $c$.
\end{Remark}

\section{The Whittaker Functions for Metaplectic Covers}\label{metwhit}

Now that we have constructed metaplectic covering groups of $GL_r(F)$, we fix a metaplectic cover $\twid{G}$ with bilinear form $B_{\twid{G}}$ for this section and construct its genuine unramified principal series representations, as well as the Whittaker functions on these representations. See \cite{McNrep} for a convenient source.

\begin{Def}A representation $\pi$ of $\twid{G}$ is  \emph{genuine} if the central $\mu_n$ acts by scalars via the fixed embedding $\e: \mu_n \rightarrow\C^\times$. That is to say, $\pi(\z g) = \e(\z) \pi(g)$ for all $\z\in \mu_n$. 
\end{Def}

We first construct genuine irreducible representations of the metaplectic torus $\twid{T} := p^{-1}(T)$. To this end, let $K$ be the maximal compact subgroup $GL_r(\frak{o})$ of $G$; then $T(\frak{o}) = T\cap K$. Let $\twid{T}(\frak{o})$ be the preimage of $T(\frak{o})$ in $\twid{G}$ and let $H$ be the centralizer of $\twid{T}(\frak{o})$ in $\twid{T}$. In fact, $H$ is the maximal abelian subgroup of $\twid{T}$ \cite{McNrep}. Now, let $\boldsymbol{s}: G\rightarrow \twid{G}$ be the standard section and let $\Lambda$ be the lattice defined by
\[ \Lambda := \{x \in Y: \boldsymbol{s}(\varpi^x)\in H\} = \{x\in Y : B_{\twid{G}}(x,y) \in n\Z \text{ for all } y\in Y.\}.\]

\noindent By \cite{McNrep}, we have $H = \Lambda \times \mu_n \times T(\frak{o})$ as a set. 

To construct a genuine irreducible representation of $\twid{T}$, we take a genuine character $\chi$ of $H$ that is trivial on $\twid{T}\cap K$, noting that our central extension splits over K \cite{McNrep}. That is, let $\chi$ be in the set
\begin{align}\label{chiset}\{\chi \in \text{Hom}(H/(\twid{T}\cap K), \C^\times) : \chi(\z) = \epsilon(\z) \text{ for all }\z\in \mu_n\}.\end{align}
Inducing $\chi$ from $H$ to $\twid{T}$ gives an irreducible representation $(\pi_\chi, i(\chi))$ on $\twid{T}$, by Theorem 5.1 of \cite{McNrep}.

Next, we build a representation of $\twid{G}$ from $i(\chi)$. Let $B$ be the standard Borel subgroup $B\supseteq T$ in $G$, and let $\twid{B}$ be its preimage in $\twid{G}$, not to be confused with the bilinear form $B_{\twid{G}}$. Inflate the representation $(\pi_\chi, i(\chi))$ to $\twid{B}$ under the canonical surjection $\twid{B} \rightarrow \twid{T}$, and induce again to get the unramified principal series $I(\chi) := \text{Ind}_{\twid{B}}^{\twid{G}} (i(\chi)).$ More concretely, $I(\chi)$ is the set of locally constant functions $f: \twid{G} \rightarrow i(\chi)$ such that
\[ f(bg) = (\d^{1/2}\chi)(b)f(g) \hspace{0.5cm} \text{ for all }g\in \twid{G}, b\in \twid{B}\]
where $\d$ is the modular quasicharacter of $B$. Note that $\twid{G}$ acts on $I(\chi)$ by right translation.

We then need to determine when these representations are irreducible. For $GL_r(F)$, the genuine characters in (\ref{chiset}) may be parametrized by the diagonal elements in $GL(r,\C)$, which we will denote as $r$-vectors $\boldsymbol{z}$: we can express every element $h\in H$ as $h= \z \boldsymbol{s}(t)$, where $\z\in \mu_n$ and $t = (t_1,...,t_r)\in T$. Define
\begin{align}\label{defchi}
\chi_{\boldsymbol{z}}(h) = \z \prod_{i=1}^r z_i^{\text{ord}(t_i)}.
\end{align}
We will denote the representation doubly induced from $\chi_{\boldsymbol{z}}$ as $(\pi_{\boldsymbol{z}}, I(\chi_{\boldsymbol{z}}))$. This does not depend uniquely on $\boldsymbol{z}$: in particular, if $\boldsymbol{z}^n = (\boldsymbol{z}')^n$, then $\pi_{\boldsymbol{z}} \cong \pi_{\boldsymbol{z}'}$. Furthermore, $\pi_{\boldsymbol{z}}$ is irreducible if and only if $\boldsymbol{z}^{n\a} \neq q^{\pm 1}$ for all coroots $\a$.

Let $(\pi_{\boldsymbol{z}}, I(\chi_{\boldsymbol{z}}))$ be irreducible. By Theorem 3.1 of \cite{McNCS}, $I(\chi_{\boldsymbol{z}})^K$ is one-dimensional; fix a vector $\phi_K^{\boldsymbol{z}} := \phi_K^{\chi_{\boldsymbol{z}}}$ in this space, which we shall henceforth call the \emph{spherical vector} for the representation $I(\chi_{\boldsymbol{z}})$. All the subsequent results are independent of this choice.

Let $U$ be the unipotent radical of $B$. By Matsumoto's description of the metaplectic cover in \cite{Matsumoto} we know that $\twid{G}$ splits over $U$, so we may regard the root subgroups $U_\a$ for positive roots $\a$ as subgroups of $\twid{G}$. Let $\Phi^+$ denote the positive roots and $\Phi^-$ the negative roots. Then for any element $w$ of the Weyl group, define the corresponding unipotent subgroup
\[ U_w:= \prod_{\a\in \Phi^+, w(\a) \in \Phi^-} U_\a.\]
Define the intertwining operator $\mathcal{A}_w: I(\chi_{\boldsymbol{z}}) \rightarrow I(w\chi_{\boldsymbol{z}})$ by
\begin{align*} \mathcal{A}_w(f)(g): = \int_{U_w} f\left(w^{-1}ug\right)du.
\end{align*}
(By abuse of notation, both $w\in W$ and a representative of $w$ in $K$ are labelled by $w$.)

As noted above, each representation $I(w\chi_{\boldsymbol{z}})$ has a spherical vector $\phi_K^{w\boldsymbol{z}}$; we may choose these to be compatible with the Weyl group action of the intertwining operators, so
\[ \mathcal{A}_w \phi_K^{\boldsymbol{z}} = c_w(\chi_{\boldsymbol{z}})\phi_K^{w\boldsymbol{z}}\]
where for simple reflections $s = s_{\a^\vee}$, 
\[ c_s(\chi_{\boldsymbol{z}})  = \frac{1-q^{-1}\boldsymbol{z}^{n_Q\a^\vee}}{1-\boldsymbol{z}^{n_Q\a^\vee}},\]
where $\boldsymbol{z}^{\mu^\vee}$ means that for $\boldsymbol{z} = (z_1,...,z_r)$ and  $\mu^\vee = \sum c_i\vare_i$ a coweight, we take $\boldsymbol{z}^{\mu^\vee} = \prod z_i^{c_i}$.  We see that $c_{w_1w_2}(\chi_{\boldsymbol{z}}) = c_{w_1}(w_2\chi_{\boldsymbol{z}})c_{w_2}(\chi_{\boldsymbol{z}})$ whenever $\ell(w_1w_2) = \ell(w_1) + \ell(w_2)$. Let $\overline{\mathcal{A}}_w$ be the normalized intertwining operator
\begin{align}\label{intertwine} \overline{\mathcal{A}}_w:= c_w(\chi_{\boldsymbol{z}})^{-1} \mathcal{A}_w.
\end{align}

Fix an unramified character $\psi: U \rightarrow \C^\times$. That is to say, for any simple reflection $s\in W$, the restriction of $\psi$ to the root subgroup $U_\a$ is trivial on $U_\a \cap K$ but nontrivial on any open subgroup of $U_s$ with larger abelianization than $U_s\cap K$.

\begin{Def} A \emph{Whittaker functional} (with respect to $\psi$) on the representation $(\pi,V)$ is a linear functional $W$ for which
\[ W(\pi(u)v) = \psi(u)W(v)\]
for all $u\in U, v\in V$.
\end{Def}

By Section 6 of \cite{McNCS}, there is a unique $i(\chi_{\boldsymbol{z}})$-valued Whittaker functional on $(\pi_{\boldsymbol{z}}, I(\chi_{\boldsymbol{z}}))$, which we shall denote $\boldsymbol{W}^{\boldsymbol{z}}$, given by
\begin{align}\label{whitdef}
\boldsymbol{W}^{\boldsymbol{z}} (\phi) := \int_{U^-} \phi(uw_0)\psi(u)du \hspace{0.5cm} \text{ for all } \phi\in I(\chi_{\boldsymbol{z}})
\end{align}
where $U^-$ is the opposite unipotent radical, viewed as a subgroup of $\twid{G}$. Note that this Whittaker functional is usually defined over $U$, rather than $U^-$, but we may define it over $U^-$ instead since there exists a $T$-invariant splitting of our metaplectic cover over $U$ by Deligne as explained in \cite{MW}, which is stated globally but applies locally as noted. 

We then have an isomorphism between $i(\chi_{\boldsymbol{z}})^*$ and the space of $\C$-valued Whittaker functionals on $I(\chi_{\boldsymbol{z}})$ given by composition:
\[ L \mapsto L\circ \boldsymbol{W}^{\boldsymbol{z}} \hspace{0.5cm} \text{ for all } L\in i(\chi_{\boldsymbol{z}})^*.\]
Thus, it is natural to index the Whittaker functionals of a representation $(\pi_{\boldsymbol{z}},I(\chi_{\boldsymbol{z}}))$ on $\twid{G}$ with a basis of $i(\chi_{\boldsymbol{z}})^*$. Following the construction in \cite{McNCS}, let $v_0 = \phi_K^{\boldsymbol{z}}(1) \in i(\chi_{\boldsymbol{z}})$. Let $\Gamma\subset (\Z/n_Q\Z)^r$ be a subset of coweights such that $ \{\varpi^\g : \g \in\Gamma\}$ is a set of coset representatives for $\twid{T}/H$. When unambiguous, we may identify $\Gamma$ with this set of representatives. Then the set $\{ \pi_{\boldsymbol{z}}(\varpi^\g)v_0 : \g\in \Gamma\}$ is a basis for $i(\chi_{\boldsymbol{z}})$. Let $\{L_\g^{\boldsymbol{z}} : \g\in\Gamma\}$ denote the corresponding dual basis of $i(\chi_{\boldsymbol{z}})^*$.

For $\varpi^\mu \in \twid{T}$, we can always write $\varpi^\mu = \overline{t}\cdot h$ for some $h\in H, \overline{t} \in \twid{T}/H$. That is to say, we can write $\mu =  \nu+ \b$ for $\b \in \Lambda,\nu\in \Gamma$. Then we have that 
\begin{align}\label{linfunc} L_\g^{\boldsymbol{z}}(\pi_{\boldsymbol{z}}(\varpi^\mu)v_0) = \begin{cases} \chi_{\boldsymbol{z}}(\varpi^\mu) &\text{ if } \nu = \g\\0 &\text{otherwise}\end{cases}.
\end{align}

Therefore, the set $\{ W^{\boldsymbol{z}}_\g = L_\g^{\boldsymbol{z}}\circ \boldsymbol{W}^{\boldsymbol{z}}: \g\in \Gamma\}$ gives us a basis of the space of Whittaker functionals. 

\begin{Def}\label{sphericalWhit} A \emph{spherical Whittaker function} on $(\pi_{\boldsymbol{z}}, I(\chi_{\boldsymbol{z}}))$ is the map $\twid{G} \rightarrow \C$ given by
\[ \mathcal{W}^{\boldsymbol{z}}_\g(g) = W^{\boldsymbol{z}}_\g(\pi_{\boldsymbol{z}}(g)\phi_K^{\boldsymbol{z}}).\]
Note that $\mathcal{W}^{\boldsymbol{z}}_\g$ satisfies
\[ \mathcal{W}^{\boldsymbol{z}}_\g(\z ugk) = \z \psi(u) \mathcal{W}^{\boldsymbol{z}}_\g(g)\]
for $\z\in \mu_n, u\in U, g\in \twid{G}, k\in K$. Let $\frak{W}^{\boldsymbol{z}}$ be the set of such Whittaker functions. By the metaplectic Iwasawa decomposition, $G = U\twid{T}K$, it suffices to compute the values of $\mathcal{W}^{\boldsymbol{z}}_\g$ on the torus elements $\twid{T}$. Furthermore, it is straightforward to check that $\mathcal{W}_\g^{\boldsymbol{z}}$ vanishes on torus elements $\varpi^\mu$ unless $\mu$ is a dominant coweight.  \end{Def}

\begin{Remark}\label{Dhilloncalc}
Since spherical Whittaker functions are in bijection with elements of $\twid{T}/H$, we might ask for the cardinality of these sets. For some $b,c$, we have that $B_{\twid{G}} = B_{b,c}$. We may restrict this cover to $\twid{SL}_r \subset \twid{G}$, on which one may check that 
\begin{align} |\twid{T}_{SL}/H_{SL}| = \frac{n^{r-1}}{(br,n)(b,n)^{r-2}}.\end{align}\label{slrcounteqn}
In the more general case, it is tricky to write down a closed formula for $|\twid{T}/H|$, but for reasons that will become clearer as we examine the computation of these Whittaker functions, we will see that on a given element of $\twid{G}$, most of them will be zero. So we merely note that $\twid{T}_{SL}/H_{SL}$ and $\twid{T}/H$ share a common subgroup with with order $(n/(b,n))^{r-1}$ and so the total number of Whittaker functions may be expressed as 
\[ |\twid{T}/H| = (\ast)_{b,c} \cdot \frac{n^{r-1}}{(b,n)^{r-1}}\]
where the multiplier $(\ast)_{b,c}$ depends on $b,c,$ and $n$.
\end{Remark}

\section{Calculating Whittaker Functions on Metaplectic Covers}\label{whitcalcs}

We have several methods for studying the values of spherical Whittaker functions. One method relies on intertwining operators for principal series representations: Casselman and Shalika \cite{CassShal} followed this method for linear groups, as did Kazhdan and Patterson \cite{KP}, McNamara \cite{McNCS}, Chinta and Offen \cite{COcassshal}, and Brubaker, Buciumas, and Bump \cite{BBBIce} for various metaplectic covering groups. This approach uses the fact that for $w$ an element of the Weyl group, $\boldsymbol{W}^{w\chi} \circ \overline{\mathcal{A}}_{w}$ is an $i(\chi_{\boldsymbol{z}})$-valued Whittaker functional on $I(\chi_{\boldsymbol{z}})$. Thus, composing it with an element of $i(\chi_{\boldsymbol{z}})^*$, we get a $\C$-valued Whittaker functional, which we can expand in our chosen basis as follows:
\begin{align}\label{intonWhit} \mathcal{W}_\nu^{w\boldsymbol{z}} \circ \overline{\mathcal{A}}_w = \sum_{\mu\in\Gamma} \tau_{\nu,\mu}^{(w)}\mathcal{W}_\mu^{\boldsymbol{z}}
\end{align}
where $\tau_{\nu,\mu}^{(w)}$ are (up to normalization) rational functions supported on $n_Q$-th powers of $\boldsymbol{z}$. We call these structure constants $\tau_{\nu,\mu}^{(w)}$ the \emph{scattering matrix coefficients} of the space of Whittaker functions. Since the reflections $s_{\a^\vee}$ corresponding to simple coroots $\a^\vee$ generate the Weyl group, it suffices to compute $\tau_{\nu,\mu}^{(s_{\a^\vee})}$, which was done for the metaplectic covers of $GL_r$ by Kazhdan and Patterson in \cite{KP} and reworked for more general groups by McNamara in \cite{McNCS}. 

Recall that $n_Q = n/\text{gcd}(n,Q(\a^\vee))$ for any simple coroot $\a^\vee$, and let $g(a)$ denote the normalized $n$-th Gauss sum given by 
\begin{align}\label{gausssum} g(a,b) = \frac{1}{q}\int_{\frak{o}_F^\times} (\varpi,t)^a\psi(\varpi^b t) dt,
\end{align}
where $(\cdot,\cdot)$ denotes the $n$-th power Hilbert symbol \cite{Neukirch}. We will primarily use these Gauss sums with $b=-1$, so define $g(a) := g(a,-1)$, but we require the full definition for Section \ref{icewhit}.

\begin{Remark}
This Gauss sum has the following properties:
\begin{itemize}
\item If $b < -1$, then $g(a,b) = 0$.
\item If $b \geq 0$, then $g(a,b) = 1 - q^{-1}$ if $a\equiv0\pmod{n}$ and 0 otherwise.
\item The argument $a$ depends only on the residue class $a \pmod{n}$. In particular, $g(0) = -q^{-1}$, and $g(a)g(n-a) = q^{-1}$ if $n$ does not divide $a$.
\item  Comparing to the normalized $n$-th Gauss sum $\frak{g}(a)$ used in \cite{McNCS}, we have $g(a) = q^{-1}\frak{g}(-a)$.
\end{itemize}
\end{Remark}

\begin{Prop}[Kazhdan-Patterson, \cite{KP}, Lemma I.3.3; McNamara, \cite{McNCS}, Theorem 13.1]\label{TAU} Let $s:=s_{\a^\vee}$ be a simple reflection with corresponding coroot $\a^\vee$ and let $\mu,\nu\in \Gamma$. The structure constants $\tau_{\nu,\mu} := \tau^{(s)}_{\nu,\mu}$ can be broken into two pieces:
\[ \tau_{\nu,\mu} = \tau_{\nu,\mu}^1 + \tau_{\nu,\mu}^2\]
where $\tau^1$ vanishes unless $\nu\sim \mu \mod{n_Q\Lambda}$ and $\tau^2$ vanishes unless $\nu \sim s(\mu) + \a^\vee \mod{n_Q\Lambda}$. In these cases,
\[ \tau_{\mu,\mu}^1 = (1- q^{-1}) \frac{\boldsymbol{z}^{[\mu_{i+1} - \mu_i]\alpha^\vee}}{1 - q^{-1}\boldsymbol{z}^{n_Q\a^\vee}}\]
where $[\ ]$ denotes least residue modulo $n_Q$ and
\[ \tau^2_{s(\mu)+\a^\vee,\mu} = g(-B(\a^\vee, \mu) +Q(\a^\vee)) \cdot \boldsymbol{z}^{-\a^\vee}\frac{1-\boldsymbol{z}^{n_Q\a^\vee}}{1-q^{-1}\boldsymbol{z}^{n_Q\a^\vee}}.\] 
Furthermore, under this normalization of the Whittaker functions, the structure constants are independent of the choice of coset representatives $\mu$ and $\nu$.
\end{Prop}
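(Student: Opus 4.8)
The plan is to reduce the statement to the rank-one subgroup $X_{\a^\vee}\cong SL_2(F)$ attached to $\a^\vee$, run the standard intertwining-integral computation there, and then quote the rank-one results of Kazhdan--Patterson \cite[Lemma I.3.3]{KP} and McNamara \cite[Theorem 13.1]{McNCS} after matching their normalizations with ours. Since $s=s_{\a^\vee}$ is a simple reflection, $U_s$ is the single root subgroup $U_\a$, so $\mathcal{A}_s(f)(g)=\int_{U_\a}f(s^{-1}ug)\,du$ depends only on the restriction of $\chi_{\boldsymbol{z}}$ to $X_{\a^\vee}$ and its preimage in $\twid{G}$. By the metaplectic Iwasawa decomposition and Definition \ref{sphericalWhit}, it is enough to evaluate $\boldsymbol{W}^{s\boldsymbol{z}}\circ\overline{\mathcal{A}}_s$ on the vectors $\pi_{\boldsymbol{z}}(\varpi^\mu)\phi_K^{\boldsymbol{z}}$ with $\mu$ dominant and then read off $\tau_{\nu,\mu}$ by pairing the resulting $i(\chi_{\boldsymbol{z}})$-valued Whittaker functional against the dual basis $\{L_\nu^{\boldsymbol{z}}\}$ via (\ref{linfunc}). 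The cocycle $\sigma_{b,c}$ restricted to $X_{\a^\vee}$ is the $\xi$-image of the integer $Q(\a^\vee)$, which is how $Q(\a^\vee)$, and hence $n_Q$, enter; along the way one checks that $n_Q$ is the least positive integer with $n_Q\a^\vee\in\Lambda$, which is what forces the sums below to be geometric series in $\boldsymbol{z}^{n_Q\a^\vee}$.

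Next I would split the variable of the intertwining integral according to whether it lies in $\frak{o}$. The contribution from $\frak{o}$, after unfolding the Whittaker integral and summing over the valuation of the leftover torus shift, is a term proportional to $\mathcal{W}_\mu^{\boldsymbol{z}}$ supported on $\nu\sim\mu\bmod n_Q\Lambda$; this is $\tau^1$, and the ceiling exponent appears because $B(\a^\vee,\mu)/Q(\a^\vee)$ is an integer while the least shift $k\a^\vee$ that puts the relevant torus element into the coset of $\twid{T}/H$ making $L_\nu^{\boldsymbol{z}}$ nonzero in (\ref{linfunc}) is pinned down by a congruence modulo $n_Q$, whose least nonnegative solution is $n_Q\lceil B(\a^\vee,\mu)/(n_Q Q(\a^\vee))\rceil-B(\a^\vee,\mu)/Q(\a^\vee)$. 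The contribution from $F\setminus\frak{o}$, via the Bruhat decomposition of $\begin{pmatrix}1&x\\0&1\end{pmatrix}$ in $SL_2$, produces the Weyl element $w$ and hence the shift $\mu\mapsto s(\mu)+\a^\vee$; writing $x=\varpi^{-k}t$ with $t\in\frak{o}^\times$ and $k>0$, the metaplectic cocycle contributes a Hilbert-symbol factor and $\psi$ an additive character, so the integral over $t\in\frak{o}^\times$ is exactly a Gauss sum (\ref{gausssum}) with argument $-B(\a^\vee,\mu)+Q(\a^\vee)$ (the $+Q(\a^\vee)$ being the metaplectic correction to the action of $w$), while the sum over $k$ supplies the factor $\boldsymbol{z}^{-\a^\vee}$ and a geometric series. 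Adding the two contributions, dividing by $c_s(\chi_{\boldsymbol{z}})=(1-q^{-1}\boldsymbol{z}^{n_Q\a^\vee})/(1-\boldsymbol{z}^{n_Q\a^\vee})$ as in (\ref{intertwine}), and using $g(a)=q^{-1}\frak{g}(-a)$ to pass between our Gauss sum and McNamara's, yields the two displayed formulas.

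The conceptual steps are classical; I expect the main obstacle to be bookkeeping. One must track the metaplectic cocycle restricted to $X_{\a^\vee}$ — the Hilbert-symbol factors and their dependence on both $Q(\a^\vee)$ and $\mu$ — and carefully reconcile our conventions for $\overline{\mathcal{A}}_s$, for $\chi_{\boldsymbol{z}}$ in (\ref{defchi}), and for the basis $\{L_\g^{\boldsymbol{z}}\}$ with those of \cite{KP} and \cite{McNCS}, so that the ceiling function in $\tau^1$ and the argument $-B(\a^\vee,\mu)+Q(\a^\vee)$ of the Gauss sum in $\tau^2$ emerge exactly as stated, rather than off by a sign, a lattice shift, or a power of $q$. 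Once that dictionary is fixed, the two identities follow from the cited rank-one results.
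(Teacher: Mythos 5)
The paper itself gives no proof of Proposition~\ref{TAU}: it is stated as a citation of Kazhdan--Patterson and McNamara, and the subsequent Remark explains that the precise normalization (with Whittaker functions supported on cosets $\g\in\Gamma$ rather than on the $n_Q$-th power lattice) was worked out by Brubaker--Buciumas--Bump--Friedberg in \cite{BBBFhecke}. Your proposal is, in the end, the same move --- reduce to the rank-one subgroup $X_{\a^\vee}\cong SL_2(F)$, invoke \cite[Lemma~I.3.3]{KP} and \cite[Theorem~13.1]{McNCS}, and then match conventions --- and your sketch of the underlying rank-one integral (splitting over $\frak{o}$ vs.\ $F\setminus\frak{o}$, the ceiling exponent as the least $n_Q$-congruence solution, the Gauss sum from the metaplectic cocycle over $\frak{o}^\times$, dividing by $c_s(\chi_{\boldsymbol{z}})$) is a faithful outline of what those sources do. You correctly flag that the substantive work is the normalization dictionary; the one reference you are missing is \cite{BBBFhecke}, which the paper leans on for exactly that step.
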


\begin{Remark}
Note that the form of these $\tau_{\nu,\mu}$ is slightly different than that appearing in \cite{McNCS}, owing to the fact that our Whittaker functions are supported on cosets $\g\in\Gamma$ as opposed to on the $n_Q$-th power lattice. Writing $\mu = \nu + \beta$ for $\beta \in \Lambda, \nu \in \Gamma$, where we require $\nu$ to be the minimal representative for its coset, we may compare McNamara's functional $\lambda_\gamma$ to ours via 
\[ \lambda_\gamma(\pi_{\boldsymbol{z}}(\varpi^\mu)v_0) = \boldsymbol{z}^{-\nu} L_\gamma^{\boldsymbol{z}}(\pi_{\boldsymbol{z}}(\varpi^\mu)v_0).\]
\end{Remark}

\begin{proof}
In Section 6 of \cite{BBBFhecke}, Brubaker, Buciumas, Bump, and Friedberg calculate the structure constants for this normalization, which differ by an additional factor of $\boldsymbol{z}^{(-B(\a^\vee,\mu)/Q(\a^\vee))\a^\vee}$ on $\tau^1$ and $\boldsymbol{z}^{-\a^\vee}$ on $\tau^2$ from the forms in Theorem 13.1 of \cite{McNCS}. Applying Proposition \ref{alltheforms}, we may simplify the power of $\boldsymbol{z}$ appearing in the numerator of $\tau^1$: for $\mu = (\mu_1,...,\mu_r)$, we have $B(\a,\mu) = b(\mu_i - \mu_{i+1})$ and $Q(\a) = b$, so 
\[\boldsymbol{z}^{n_Q \left\lceil \frac{B(\alpha^\vee,\mu)}{n_QQ(\alpha^\vee)}\right\rceil\a^\vee - \frac{B(\alpha^\vee,\mu)}{Q(\alpha^\vee)}\alpha^\vee} = \boldsymbol{z}^{n_Q \left\lceil \frac{\mu_i-\mu_{i+1}}{n_Q}\right\rceil\a^\vee - (\mu_i-\mu_{i+1})\alpha^\vee} = \boldsymbol{z}^{[\mu_{i+1} - \mu_i]\alpha^\vee}.\]

Applying a similar argument within the Gauss sum of $\tau^2$, we see that both $\tau^1$ and $\tau^2$ depend solely on the equivalence class of $\mu_i -\mu_{i+1}$ modulo $n_Q$. Any other  representative $\mu'$ for the coset indexed by $\mu$ will differ by an element $\lambda \in \Lambda$. However, the definition of $\Lambda$ requires $B(\a^\vee,\lambda) \in n\Z$, which forces $\lambda_i - \lambda_{i+1} \equiv 0 \pmod{n_Q}$. Thus, our structure constants are independent of coset representative choice.
\end{proof}

We will return to the exploration of these scattering matrix coefficients $\tau_{\nu,\mu}$ in Section \ref{icewhit}, in which we relate them to values of the $R$-matrices explored in the next two sections.

A second method of computing the values of the spherical Whittaker function is by partitioning the preimage of $U^-$ in $\twid{G}$ into cells according to the torus components of the metaplectic Iwasawa decomposition, then computing the defining integral of equation (\ref{whitdef}) directly cell by cell. This is the approach taken in McNamara's \cite{McNcrystal}, which exploits the Chevalley-Steinberg presentation of reductive groups and their metaplectic covers. McNamara examines the case of the $n$-fold cover for $G = SL_{r}(F)$; we will extend his results to our context.

Let $\chi = \chi_{\boldsymbol{z}}$ be a genuine character of $H/(\twid{T}\cap K)$ as defined in (\ref{defchi}). We may extend $\chi$ naturally to $\twid{T}$: every $x\in \twid{T}$ may be written as $x = \zeta \boldsymbol{s}(t)$ for some $t\in T$, so we set $\chi_{\boldsymbol{z}}(x) = \zeta \prod z_i^{\text{ord}(t_i)}$, mirroring (\ref{defchi}). For each $\g \in \Gamma$, let $f_\g: \twid{G} \rightarrow \C$ be the function
\begin{align}
f_\g\left( \z u t k\right) = \begin{cases} \z \d^{1/2}(t) \prod_{i=1}^r z_i^{\lambda_i} &\text{ if }\g = \lambda\\
0 & \text{ else}\end{cases}
\end{align}
for $t = \varpi^\lambda \in \twid{T}/T(\frak{o})$, where $\d$ is the modular quasicharacter of $B$.
Then, we have that $f_\g(g) = L_\g^{\boldsymbol{z}}(\phi_K^{\boldsymbol{z}}(g))$, and therefore,
\[ \mathcal{W}^{\boldsymbol{z}}_\g(g) = \int_{U^-} f_\g(uw_0g)\psi(u)du\]
where again, $w_0$ is a representative in $K$ for $w_0$ in the Weyl group.
Recall, by definition \ref{sphericalWhit}, we need only calculate the Whittaker function on torus elements with dominant weight, so we consider the torus element $\varpi^\lambda$ for a dominant integer partition $\lambda$. Also, note that $w_0\varpi^\lambda w_0^{-1} = \varpi^{(\lambda_r,...,\lambda_1)}$, which we shall denote $\varpi^{w_0\lambda}$. Under the change of coordinates $u \mapsto \varpi^{w_0\lambda} u \varpi^{-w_0\lambda}$, we obtain 
\[ \mathcal{W}^{\boldsymbol{z}}_\g(\varpi^\lambda) =  \int_{U^-} f_\g(\varpi^{w_0\lambda} u) \psi_{w_0\lambda}(u)du,\]
where $\psi_{w_0\lambda}(u) := \psi(\varpi^{w_0\lambda} u \varpi^{-w_0\lambda})$.
Note that $f_\g(\varpi^\lambda g) = (\d^{1/2}\chi)(\varpi^\lambda) f_{\g-\lambda}(g)$ for any $g\in \twid{G}$. Thus,
\begin{align}\label{mainwhitint} \mathcal{W}^{\boldsymbol{z}}_\g(\varpi^\lambda) =  (\d^{1/2}\chi)(\varpi^{w_0\lambda})\int_{U^-} f_{\g -w_0\lambda}(u) \psi_{w_0\lambda}(u)du.\end{align}

Hence, computing $\mathcal{W}^{\boldsymbol{z}}_\g(\varpi^\lambda)$ reduces to computing 
\begin{align}\label{Ilambda} I_{\g,\lambda} = \int_{U^-} f_{\g-w_0\lambda}(u) \psi_{w_0\lambda}(u)du
\end{align}
and this calculation occurs entirely in $\twid{SL}_r(F) \subset \twid{G}$, which allows us to apply many of the calculations done by McNamara in  \cite{McNcrystal}. Note also that the Whittaker function here is defined slightly differently from that in \cite{McNcrystal} in two ways: the argument of $f_\g$ and the fact that we use $f_\g$ instead of $f = \sum_{\g\in \Gamma} f_\g$. However, this second choice produces equivalent information: each term in the Whittaker function $\mathcal{W}(g) = \sum_{\g\in \Gamma}\mathcal{W}^{\boldsymbol{z}}_\g(g)$ analogous to McNamara's is supported on a different coset of  $\twid{T}/H$, so it is straightforward to recover $\mathcal{W}$ from $\mathcal{W}^{\boldsymbol{z}}_\g$ or vice versa. For our later connection to lattice models, however, we prefer to consider each piece $\mathcal{W}_\g^{\boldsymbol{z}}$ separately.

\begin{Remark} 
If $\varpi^{\g-w_0\lambda} \not\in SL_r(F)$, then the function $f_{\g-w_0\lambda}(u) $ is identically zero on $U^-$ and thus $I_{\g,\lambda} = 0$. Hence, for a given $\lambda$, we may calculate the maximum number of nonzero Whittaker values $\mathcal{W}^{\boldsymbol{z}}_\g(\varpi^\lambda)$: it is precisely the number of cosets of $|\twid{T}/H|$ whose representatives differ from $w_0\lambda$ by a cocharacter for $SL_r$, which is exactly $|\twid{T}_{SL}/H_{SL}|$ as calculated in Remark \ref{Dhilloncalc}.
\end{Remark}

McNamara computes the equivalent of $I_\lambda = \sum_{\g\in\Gamma}I_{\g,\lambda}$ explicitly for the unique $n$-fold metaplectic cover of $SL_r(F)$ by expressing it as a sum over the nodes in a crystal parametrized by $\lambda$.  In the remainder of the section, we will extend this computation to any metaplectic cover of $GL_r(F)$.

Let $I$ be a finite indexing set for the set of simple roots. As usual, let $w_0$ denote the longest word in the Weyl group $W$, and set $N:= \frac{r(r-1)}{2}$, which is the length of $w_0$. Let $\ideal{\cdot,\cdot}$ denote the pairing of roots and coroots induced by our fixed bilinear form $B_{b,c}$. Let $l_{\a^\vee}$ be the length of a coroot $\a^\vee$ induced by this pairing. Then $l_{\a^\vee} = Q(\a^\vee) = b\cdot||\a^\vee||^2$, where $||\cdot||$ is the standard Euclidean norm.

In order to compute this integral, we first describe a decomposition of $U^{-}$ according to a particular basis for the highest weight representation parametrized by pairs $(\boldsymbol{i},\boldsymbol{m})$, where $\boldsymbol{i} = (i_1,...,i_N)$ is an $N$-tuple of elements of $I$ such that $w_0 = s_{i_1}s_{i_2}\cdots s_{i_N}$ is a reduced decomposition of the long word $w_0\in W$, and $\boldsymbol{m}\in \NN^N$ with certain additional conditions (see Proposition \ref{mdef}). This choice of long word gives a total ordering $\a_1 <\a_2 \cdots < \a_N$ on the positive roots by $\a_j = s_{i_1}\cdots s_{i_{j-1}}\a_{i_j}$, where $\a_{i_j} \in I$. We prefer to retain the symbol $\a_i$ for the $i$-th simple root, so we will immediately fix $w_0 = s_r(s_{r-1}s_r)(s_{r-2}s_{r-1}s_r) \cdots(s_1s_2\cdots s_{r-1}s_r)$, which gives us the ordering $(i,j) < (k,\ell)$ if $i<k$ or if $i=k$ and $j<\ell$.

We will denote parts of $\boldsymbol{m}$ as $m_{\a}:=m_{i,j}$, where $\a=(i,j)$, and we list these in $\boldsymbol{m}$ in the same order as the positive roots. For a fixed $\boldsymbol{i}$, the tuples $(\boldsymbol{i},\boldsymbol{m})$ index the cells $C_{\boldsymbol{m}}^{\boldsymbol{i}}$ in a specific partition of $U^-$, described in detail in Section 4 of \cite{McNcrystal}. This basis of ordered pairs happens to arise from string data in the Kashiwara crystal $B(\lambda)$  associated to a highest weight $\lambda$. See Bump and Schilling \cite{BScrystal} for a full treatment of these crystals.

 Note that differences in indexing on parts of $\lambda$ occur between our paper and McNamara's because we prefer to index $\lambda$ as a partition, rather than a sum of fundamental weights, in order to facilitate the connections we make to lattice models in Section \ref{metice}.

\begin{Prop}\label{mdef}
For $\boldsymbol{i} = (r,r-1,r,r-2,r-1,r,...,1,2,...,r)$, the tuple $(\boldsymbol{i},\boldsymbol{m}) \in B(\lambda + \rho)$ if and only if for each $(i,j) \in \Phi^+$, we have $0\leq m_{i,j}$ and 
\begin{align}\label{mcond}
\sum_{k=j}^r m_{i,k} \leq \lambda_{r-i} - \lambda_{r-i+1} + 1 + \sum_{k=j+1}^r m_{i+1,k}.
\end{align}
\end{Prop}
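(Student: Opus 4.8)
The plan is to recognize that Proposition~\ref{mdef}, despite being phrased in terms of the partition of $U^-$ from Section~4 of \cite{McNcrystal}, is a purely combinatorial statement about the crystal $B(\lambda+\rho)$ (of type $A_{r-1}$) and the string, or Berenstein--Zelevinsky--Littelmann, parametrization attached to the reduced word $\boldsymbol{i}$ fixed above; no metaplectic data enters. The cell $C^{\boldsymbol{i}}_{\boldsymbol{m}}$ meets $B(\lambda+\rho)$ exactly when $\boldsymbol{m}$ is the string of some vertex of that crystal, that is, when the Kashiwara operators can be applied along $\boldsymbol{i}$ with multiplicities $\boldsymbol{m}$, starting from the highest weight vertex $v_{\lambda+\rho}$, without leaving the crystal. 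So the first step is to record, following the conventions of \cite{McNcrystal}, that this happens precisely when each partial multiplicity $m_k$ is at most the value of $\varphi_{i_k}$ at the vertex reached after the earlier steps, and then to compute those successive bounds for our ``staircase'' word.

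For that computation I would induct on $r$: stripping an extremal block from $\boldsymbol{i}$ and restricting $B(\lambda+\rho)$ to the type $A_{r-2}$ sub-diagram it supports expresses the crystal through $GL_{r-1}$-crystals, and the Gelfand--Tsetlin branching rule for this restriction yields both the recursive term $\sum_{k=j+1}^r m_{i+1,k}$ of (\ref{mcond}) and the ``room'' available to the new block of strings, namely $\langle \lambda+\rho,\a^\vee\rangle$ for the simple coroot $\a^\vee$ governing that block. Since $\langle\rho,\a^\vee\rangle = 1$ and, under the identification of $Y$ with $\Z^r$, the pairing $\langle\lambda,\a^\vee\rangle$ for the coroot attached to the $i$-th block equals $\lambda_{r-i}-\lambda_{r-i+1}$ --- the index reversal reflecting that our $w_0$ begins with the reflection in $\varepsilon_r^\vee-\varepsilon_{r-1}^\vee$ rather than in $\varepsilon_1^\vee-\varepsilon_2^\vee$ --- the upper bound is exactly the right-hand side of (\ref{mcond}), and $0\le m_{i,j}$ is just the positivity built into $\NN^N$. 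Equivalently one may quote the classical fact that the string polytope of this reduced word in type $A$ is unimodularly equivalent to the Gelfand--Tsetlin polytope, or reduce directly to McNamara's $SL_r$ computation: (\ref{mcond}) involves $\lambda$ only through the consecutive differences $\lambda_{r-i}-\lambda_{r-i+1}$, and $B(\lambda+\rho)$ for $GL_r$ restricts to the $SL_r$-crystal determined by those differences, so the $GL_r$ case follows from the $SL_r$ case.

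The remaining work, and where I expect the genuine difficulty to lie, is the bookkeeping: matching our labelling of $\boldsymbol{m}$ by positive roots $(i,j)\in\Phi^+$ and the total order $(i,j)<(k,\ell)$ induced by our $w_0$ to the conventions of \cite{McNcrystal}, checking that the range of $(i,j)$ on which (\ref{mcond}) is imposed is exactly $\Phi^+$, and inserting the $\rho$-shift --- hence the ``$+1$'' --- in the right place. The crystal-theoretic input is standard, but the combination of the $w_0$-reversal, the $\rho$-shift, and our choice to index $\lambda$ as a partition rather than a sum of fundamental weights makes index and sign errors easy, so the write-up should carry the conventions explicitly and verify them on the base cases $r=2,3$.
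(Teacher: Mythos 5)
The paper states Proposition~\ref{mdef} without proof: it is treated as a known description of the string (Berenstein--Zelevinsky--Littelmann) parametrization of the type $A_{r-1}$ crystal $B(\lambda+\rho)$ for the Gelfand--Tsetlin-compatible reduced word, with the cell decomposition taken from Section~4 of \cite{McNcrystal} and the crystal background from \cite{BScrystal}. So there is no in-paper argument to compare against; what you have written is a reconstruction of the standard justification, and it is correct. You rightly observe that nothing metaplectic enters, that the claim is exactly the string-cone inequalities shifted by $\rho$ (hence the $+1=\langle\rho,\a^\vee\rangle$), and that the index reversal $\lambda_{r-i}-\lambda_{r-i+1}$ is forced by the choice of reduced word that begins at the ``last'' simple reflection. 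Both of your suggested routes --- induction via branching to the sub-crystal, and the unimodular equivalence of the string polytope with the Gelfand--Tsetlin polytope --- are standard and would close the argument; the latter is effectively what the paper is implicitly invoking via the citation to \cite{BScrystal}, and the reduction to McNamara's $SL_r$ setting (since only consecutive differences of $\lambda$ appear) is exactly the manoeuvre the paper performs in the neighbouring proof of Theorem~\ref{crystalsum}.

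Two small remarks on precision. First, the phrase ``the cell $C^{\boldsymbol{i}}_{\boldsymbol{m}}$ meets $B(\lambda+\rho)$'' conflates a subset of $U^-$ with a combinatorial crystal; the intended statement is that the pair $(\boldsymbol{i},\boldsymbol{m})$ is the string of a vertex of $B(\lambda+\rho)$, equivalently that $\boldsymbol{m}$ lies in the corresponding string cone intersected with the $\rho$-shifted polytope for $\lambda$. Second, you are right to flag the bookkeeping: the reduced word as printed in the paper, $\boldsymbol{i}=(r,r-1,r,\ldots,1,2,\ldots,r)$, has length $r(r+1)/2$ rather than $N=r(r-1)/2$, and the worked example $\boldsymbol{i}=(2,1,2)$ for $r=3$ confirms that the indices should run over $\{1,\ldots,r-1\}$, not $\{1,\ldots,r\}$. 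This off-by-one is exactly the kind of convention mismatch you anticipated, so carrying out the base cases $r=2,3$ explicitly, as you propose, is a wise safeguard rather than an optional extra.
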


Connecting to combinatorial methods of constructing the Whittaker function as seen in \cite{McNcrystal} and \cite{BBF}, we decorate the tuple $(\boldsymbol{m},\a)$ with the following boxing rule: for $\a = \sum c_j \vare_j$, let $m_\a = \sum c_jm_j$. Then, circle $m_\a$ if $m_\a  = 0$ and box $m_\a$ if, for $\a = (i,j)$, we have equality in Equation (\ref{mcond}).

Let $g_Q(a)$ be the normalized $n_Q$-th Gauss sum $g(Q(\a^\vee)a)$, where $g(a)$ is defined in (\ref{gausssum}). We then define the weight function 
\begin{align}\label{weighting} w(\boldsymbol{m},\a) = \begin{cases} g_Q(r_\a,s_\a) & \text{ if $m_\a$ is not circled}\\ q^{r_\a} & \text{ if $m_\a$ is circled but not boxed}\\ 0 &\text{ if $m_\a$ is both boxed and circled}\end{cases}
\end{align}
where $r_{i,j} = \sum_{k\leq i}m_{k,j}$ and $s_{i,j} = \lambda_{r-i} - \lambda_{r-i+1} + \sum_{k=j+1}^r m_{i+1,k} - \sum_{k=j}^r m_{i,k}$.

\begin{Thm}\label{crystalsum}
For any metaplectic cover of $GL_r(F)$ and dominant weight $\lambda$, the integral $I_\lambda$ used to calculate the Whittaker function is given by
\[ I_\lambda = \sum_{\g\in \Gamma} I_{\g,\lambda} = \sum_{(\boldsymbol{i},\boldsymbol{m}) \in B(\lambda + \rho)} \prod_{\a\in \Phi^+} w(\boldsymbol{m},\a)x_\a^{m_\a}.\]
\end{Thm}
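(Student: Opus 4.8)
The plan is to reduce Theorem \ref{crystalsum} to McNamara's crystal computation for $\widetilde{SL}_r(F)$ in \cite{McNcrystal} by carefully tracking how the $GL_r$ data degenerates onto $SL_r$. The starting point is the observation made in the text surrounding Equation (\ref{Ilambda}): the integral $I_{\g,\lambda}$ is taken over $U^-$, which lies entirely inside $\widetilde{SL}_r(F)\subset\widetilde{G}$, and $f_{\g-w_0\lambda}$ vanishes identically on $U^-$ unless $\varpi^{\g-w_0\lambda}\in SL_r(F)$. Thus $I_\lambda=\sum_{\g\in\Gamma}I_{\g,\lambda}$ is a sum over those finitely many cosets whose representatives differ from $w_0\lambda$ by an $SL_r$-cocharacter, and on each such coset the integrand is exactly the one appearing in McNamara's setup, except that the length function on coroots is governed by our bilinear form $B_{b,c}$ rather than the Killing form, i.e. $l_{\a^\vee}=Q(\a^\vee)=b\|\a^\vee\|^2$, and the relevant Gauss sums become $g_Q(a)=g(Q(\a^\vee)a)$ with modulus $n_Q=n/\gcd(n,Q(\a^\vee))$ instead of $n$.

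The key steps, in order, are: (1) Fix the reduced word $w_0=s_r(s_{r-1}s_r)\cdots(s_1\cdots s_r)$ and use the corresponding decomposition of $U^-$ into cells $C^{\boldsymbol{i}}_{\boldsymbol{m}}$ from Section 4 of \cite{McNcrystal}; by Proposition \ref{mdef} the cells surviving with nonzero contribution to $I_\lambda$ are exactly those indexed by $(\boldsymbol{i},\boldsymbol{m})\in B(\lambda+\rho)$. (2) On each cell, carry out the iterated $p$-adic integration exactly as McNamara does, factoring the integral over $U^-$ into a product over positive roots $\a\in\Phi^+$ of one-dimensional integrals; the substitution $u\mapsto\varpi^{w_0\lambda}u\varpi^{-w_0\lambda}$ from (\ref{mainwhitint}) twists the additive character by $\psi_{w_0\lambda}$, which is what produces the exponents $s_\a=\lambda_{r-i}-\lambda_{r-i+1}+\sum_{k>j}m_{i+1,k}-\sum_{k\ge j}m_{i,k}$ in the Gauss-sum arguments. (3) Identify each one-dimensional integral: when the associated $m_\a$ is generic it evaluates to a Gauss sum $g(a,b)$ whose first argument is $r_\a$ scaled by the coroot length $Q(\a^\vee)$ — hence $g_Q(r_\a,s_\a)$ — and whose second argument is $s_\a$; when $m_\a=0$ (the circled case) the integral collapses to the volume factor $q^{r_\a}$; and the boxed-and-circled case gives a vanishing term because of the support condition on $g(a,b)$ when $b<-1$ together with the equality in (\ref{mcond}). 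This is precisely the weight $w(\boldsymbol{m},\a)$ of (\ref{weighting}). (4) Collect the monomial contributions: tracking the torus coordinates $x_\a$ through the integration shows the cell $C^{\boldsymbol{i}}_{\boldsymbol{m}}$ contributes $\prod_{\a\in\Phi^+}w(\boldsymbol{m},\a)x_\a^{m_\a}$, and summing over $(\boldsymbol{i},\boldsymbol{m})\in B(\lambda+\rho)$ yields the claimed formula.

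The main obstacle is step (3): verifying that, after replacing the $SL_r$ data by the $GL_r$ cover data with its modified bilinear form, each local integral still evaluates to the asserted Gauss sum with the correct normalization — in particular that the coroot-length rescaling $a\mapsto Q(\a^\vee)a$ enters the first argument and the modulus drops from $n$ to $n_Q$ in exactly the way recorded in $g_Q$, and that the circled/boxed dichotomy survives intact. McNamara's computation uses the Chevalley–Steinberg presentation and the precise commutation relations in the metaplectic cover; these relations carry the bilinear form explicitly, so one must re-derive the relevant one-parameter subgroup identities with $B_{b,c}$ in place of the simply-laced form and check that no new cross-terms appear (they do not, because the integration happens inside $\widetilde{SL}_r$ and only the ambient coroot lengths change). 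A secondary subtlety is bookkeeping between our indexing of $\lambda$ as a partition versus McNamara's indexing by fundamental weights, which is routine but must be done consistently so that the inequalities (\ref{mcond}) and the exponents $r_\a,s_\a$ match; once these translations are pinned down the remainder follows verbatim from \cite{McNcrystal}.
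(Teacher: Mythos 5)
Your proposal follows essentially the same approach as the paper's proof: identify the integral over $U^-$ as a computation inside $\widetilde{SL}_r(F)\subset\widetilde{G}$, treat McNamara's $n$ as a formal parameter to be specialized to $n_Q$, rescale all coroot lengths to $Q(\alpha^\vee)$, replace $n$-th Gauss sums by $n_Q$-th Gauss sums, and then rerun his cell-by-cell integration using the fixed reduced word for $w_0$ and the cells $C^{\boldsymbol{i}}_{\boldsymbol{m}}$. The paper carries out step (2) a bit more explicitly than you do, writing down the form of $f = \sum_\gamma f_\gamma$ and $\psi_{w_0\lambda}$ after the substitution and performing the change of variables $t_\alpha = \prod_{\beta>\alpha} u_\beta^{B(\alpha,\beta^\vee)}$ so that the Jacobian power of $q$ cancels, and it cites specific results (Proposition 6.1, Lemma 6.3, Theorem 6.4, Algorithm 4.4) of \cite{McNcrystal} for the assembly, but the logical content is the same as your steps (1)--(4).

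One small correction to your step (3): your explanation of the boxed-and-circled vanishing is not quite right. Equality in (\ref{mcond}) gives $s_\alpha = -1$, not $s_\alpha < -1$, and $g(a,-1)$ is nonzero; more to the point, when $m_\alpha$ is circled (i.e., $m_\alpha=0$) the cell contribution is \emph{not} a Gauss sum at all but a volume factor, so the support condition $g(a,b)=0$ for $b<-1$ is not what forces the weight to vanish. The vanishing when $m_\alpha=0$ and boxed comes from McNamara's separate analysis of the $m_\alpha=0$ cells, where the domain of integration becomes empty when $s_\alpha<0$; the paper simply defers to the proof of Theorem 8.4 of \cite{McNcrystal} for this case, noting the alterations pass through unchanged.
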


\begin{proof}
Recall that $U^-$ is the negative unipotent radical of $G$, which we identify with the corresponding subgroup in $\twid{G}$, since our metaplectic extension admits a $T$-equivariant splitting over $U$ and thus splits over $U^-$ \cite{MW}. Since the calculation of $I_\lambda$ occurs entirely in $U^-$, we can view our calculations for $G = GL_r(F)$ instead as calculations in the embedded subgroup $SL_r(F)$. As seen in Section \ref{metchar}, the bilinear form controls multiplication inside the metaplectic covering group. Notice that for $SL_r(F)$, there is a unique metaplectic extension given by the dot product; this is the case for which McNamara calculates an equivalent $I_\lambda$ in \cite{McNcrystal}. However, for $GL_r(F)$, our metaplectic extensions are given by bilinear forms $B_{b,c}$, so multiplication in the embedded copy of $SL_r(F)$ in $\twid{G}$ is modified by a power of $b$. 

We therefore treat the $n$ used by McNamara as a formal parameter, noting that in our context, this parameter is in fact our $n_Q$. That is, it depends both on our original choice of $n$ used in Section \ref{metchar} and on the bilinear form related to our metaplectic extension. To that end, we make the following modifications to McNamara's calculations: at every instance of $l_{\a^\vee}$, we use the length of $\a^\vee$ as defined above, which equals $Q(\a^\vee)$ as opposed to the standard Euclidean norm. We also replace any $n$-th order Gauss sums by $n_Q$-th order Gauss sums. We allow $\lambda$ to vary over cocharacters of $GL_r$, noting that McNamara's calculations for $SL_r$ treat $\lambda$ as a generic partition. Under these modifications, Proposition 6.1, Lemma 6.3, Theorem 6.4 of \cite{McNcrystal} remain true for any metaplectic cover of $GL_r(F)$.

Marshaling these results together, we manipulate $I_\lambda$ following McNamara: using Algorithm 4.4 to rewrite $u = \prod_{\a \in \Phi^+} e_{-\a}(x_\a)$ in terms of $w_\a = \varpi^{-m_i}u_\a$, we have that for the case when all $m_\a > 0$: $f = \sum_{\g\in \Gamma} f_\g$ takes the form
\[ f(u) = \prod_{\a\in \Phi^+} (q^{-\langle \rho,\a\rangle}x_\a)^{m_\a} (\varpi,u_\a)^{\sum_{\b\leq\a\in \Phi^+}m_\a\cdot B(\b,\a^\vee)}\]
and $\psi_{w_0\lambda}(u) = \prod_{i=1}^r \psi(\varpi^{\lambda_{r-i} - \lambda_{r-i+1}}x_{i,i+1})$ becomes
\[ \psi_{w_0\lambda}(u) = \prod_{\a \in \Phi^+} \psi \left(\varpi^{s_\a}\prod_{\b\geq\a\in \Phi^+}u_\b^{B(\a,\b^\vee)/b} \right).\]
Under the change of variables $t_\a  = \prod_{\b>\a\in \Phi^+}u_\b^{B(\a,\b^\vee)}$, the power of $q$ in $f(u)$ cancels out with that coming from the Jacobian and we then have that
\begin{align*}
\int_{C_{\boldsymbol{m}}^{\boldsymbol{i}}} f(u) \psi_{w_0\lambda}(u)du &= \prod_{\a\in \Phi^+}x_\a^{m_\a} \int_{\mathfrak{o}_F^\times} (\varpi,t_\a)^{r_\a\cdot b}\psi(\varpi^{s_\a}t_\a)dt_\a\\
&= \prod_{\a\in\Phi^+} w(\boldsymbol{m},\a)x_\a^{m_a}.
\end{align*}
For the cases in which some $m_\a = 0$, the alterations necessary are discussed in the proof of Theorem 8.4 of \cite{McNcrystal} and pass through to our general case unhindered. Summing over all cells $C_{\boldsymbol{m}}^{\boldsymbol{i}}$ then completes the proof.
\end{proof}

\section{Metaplectic Ice and Yang-Baxter Equations}\label{metice}

In \cite{BBBIce}, Brubaker, Buciumas, and Bump explore connections between \emph{solvable }``metaplectic ice" lattice models, i.e., those with weights satisfying a Yang-Baxter equation, and the Whittaker functions for the metaplectic $n$-fold cover corresponding to the dot product ($B_{\twid{G}} = B_{1,1}$). In particular, they give explicit formulas connecting the partition functions of these models to values of the Whittaker functions. 

To construct a \emph{metaplectic ice} lattice model, we build a rectangular grid with finitely many rows and columns, with vertices located at each intersection of a row and column. Note that every vertex has valence 4. Every edge is assigned a \emph{spin} (either $+$ or $-$). We fix a set of spin conditions on the boundary edges, which defines a \emph{system}, and allow the spins on interior edges to vary; we will call a choice of these interior spins a \emph{state} of our system.

In particular, \cite{BBBIce} considers a way to define boundary conditions on these models such that systems with $r$ rows are parametrized by integer partitions of length $r$. Let $\lambda = (\lambda_1,...,\lambda_r)$ be an integer partition. Build a grid with $r$ rows and $N$ columns, where $N$ is any integer greater than or equal to $\lambda_1 + r$ (not to be confused with the $N$ of section \ref{whitcalcs}). Set the boundary edge spins to be $+$ on the left and bottom edges and $-$ on the right edges. For the top edges, number them from right to left in increasing order, starting with 0. Then, add $\rho := (r-1,...,3,2,1,0)$ to $\lambda$ and label the top edges numbered $(\lambda + \rho)_i$ with $-$ spin for all $1\leq i\leq r$. Label the remaining top edges with $+$ spin. See Figure \ref{icestate} for an example of these conditions.

\begin{figure}[h]
\centering
\scalebox{0.8}{
\begin{tikzpicture}
  \coordinate (ad) at (3,0);
  \coordinate (af) at (5,0);
  \coordinate (ah) at (7,0);
  \coordinate (aj) at (9,0);
  \coordinate (al) at (11,0);
  \coordinate (bc) at (2,1);
  \coordinate (be) at (4,1);
  \coordinate (bg) at (6,1);
  \coordinate (bi) at (8,1);
  \coordinate (bk) at (10,1);
  \coordinate (bm) at (12,1);
  \coordinate (cd) at (3,2);
  \coordinate (cf) at (5,2);
  \coordinate (ch) at (7,2);
  \coordinate (cj) at (9,2);
  \coordinate (cl) at (11,2);
  \coordinate (dc) at (2,3);
  \coordinate (de) at (4,3);
  \coordinate (dg) at (6,3);
  \coordinate (di) at (8,3);
  \coordinate (dk) at (10,3);
  \coordinate (dm) at (12,3);
  \coordinate (ed) at (3,4);
  \coordinate (ef) at (5,4);
  \coordinate (eh) at (7,4);
  \coordinate (ej) at (9,4);
  \coordinate (el) at (11,4);
  \coordinate (fc) at (2,5);
  \coordinate (fe) at (4,5);
  \coordinate (fg) at (6,5);
  \coordinate (fi) at (8,5);
  \coordinate (fk) at (10,5);
  \coordinate (fm) at (12,5);
  \coordinate (gd) at (3,6);
  \coordinate (gf) at (5,6);
  \coordinate (gh) at (7,6);
  \coordinate (gj) at (9,6);
  \coordinate (gl) at (11,6);
  \coordinate (bd) at (3,1);
  \coordinate (bf) at (5,1);
  \coordinate (bh) at (7,1);
  \coordinate (bj) at (9,1);
  \coordinate (bl) at (11,1);
  \coordinate (dd) at (3,3);
  \coordinate (df) at (5,3);
  \coordinate (dh) at (7,3);
  \coordinate (dj) at (9,3);
  \coordinate (dl) at (11,3);
  \coordinate (fd) at (3,5);
  \coordinate (ff) at (5,5);
  \coordinate (fh) at (7,5);
  \coordinate (fj) at (9,5);
  \coordinate (fl) at (11,5);
  \coordinate (bcx) at (2,1.5);
  \coordinate (bex) at (4,1.5);
  \coordinate (bgx) at (6,1.5);
  \coordinate (bix) at (8,1.5);
  \coordinate (bkx) at (10,1.5);
  \coordinate (bmx) at (12,1.5);
  \coordinate (dcx) at (2,3.5);
  \coordinate (dex) at (4,3.5);
  \coordinate (dgx) at (6,3.5);
  \coordinate (dix) at (8,3.5);
  \coordinate (dkx) at (10,3.5);
  \coordinate (dmx) at (12,3.5);
  \coordinate (fcx) at (2,5.5);
  \coordinate (fex) at (4,5.5);
  \coordinate (fgx) at (6,5.5);
  \coordinate (fix) at (8,5.5);
  \coordinate (fkx) at (10,5.5);
  \coordinate (fmx) at (12,5.5);
  \draw (ad)--(gd);
  \draw (af)--(gf);
  \draw (ah)--(gh);
  \draw (aj)--(gj);
  \draw (al)--(gl);
  \draw (bc)--(bm);
  \draw (dc)--(dm);
  \draw (fc)--(fm);
  \draw[fill=white] (ad) circle (.25);
  \draw[fill=white] (af) circle (.25);
  \draw[fill=white] (ah) circle (.25);
  \draw[fill=white] (aj) circle (.25);
  \draw[fill=white] (al) circle (.25);
  \draw[fill=white] (bc) circle (.25);
  \draw[fill=white] (be) circle (.25);
  \draw[fill=white] (bg) circle (.25);
  \draw[fill=white] (bi) circle (.25);
  \draw[fill=white] (bk) circle (.25);
  \draw[fill=white] (bm) circle (.25);
  \draw[fill=white] (cd) circle (.25);
  \draw[fill=white] (cf) circle (.25);
  \draw[fill=white] (ch) circle (.25);
  \draw[fill=white] (cj) circle (.25);
  \draw[fill=white] (cl) circle (.25);
  \draw[fill=white] (dc) circle (.25);
  \draw[fill=white] (de) circle (.25);
  \draw[fill=white] (dg) circle (.25);
  \draw[fill=white] (di) circle (.25);
  \draw[fill=white] (dk) circle (.25);
  \draw[fill=white] (dm) circle (.25);
  \draw[fill=white] (ed) circle (.25);
  \draw[fill=white] (ef) circle (.25);
  \draw[fill=white] (eh) circle (.25);
  \draw[fill=white] (ej) circle (.25);
  \draw[fill=white] (el) circle (.25);
  \draw[fill=white] (fc) circle (.25);
  \draw[fill=white] (fe) circle (.25);
  \draw[fill=white] (fg) circle (.25);
  \draw[fill=white] (fi) circle (.25);
  \draw[fill=white] (fk) circle (.25);
  \draw[fill=white] (fm) circle (.25);
  \draw[fill=white] (gd) circle (.25);
  \draw[fill=white] (gf) circle (.25);
  \draw[fill=white] (gh) circle (.25);
  \draw[fill=white] (gj) circle (.25);
  \draw[fill=white] (gl) circle (.25);
  \path[fill=white] (bd) circle (.25);
  \path[fill=white] (bf) circle (.25);
  \path[fill=white] (bh) circle (.25);
  \path[fill=white] (bj) circle (.25);
  \path[fill=white] (bl) circle (.25);
  \path[fill=white] (dd) circle (.25);
  \path[fill=white] (df) circle (.25);
  \path[fill=white] (dh) circle (.25);
  \path[fill=white] (dj) circle (.25);
  \path[fill=white] (dl) circle (.25);
  \path[fill=white] (fd) circle (.25);
  \path[fill=white] (ff) circle (.25);
  \path[fill=white] (fh) circle (.25);
  \path[fill=white] (fj) circle (.25);
  \path[fill=white] (fl) circle (.25);
  \node at (bd) {$z_1$};
  \node at (bf) {$z_1$};
  \node at (bh) {$z_1$};
  \node at (bj) {$z_1$};
  \node at (bl) {$z_1$};
  \node at (dd) {$z_2$};
  \node at (df) {$z_2$};
  \node at (dh) {$z_2$};
  \node at (dj) {$z_2$};
  \node at (dl) {$z_2$};
  \node at (fd) {$z_3$};
  \node at (ff) {$z_3$};
  \node at (fh) {$z_3$};
  \node at (fj) {$z_3$};
  \node at (fl) {$z_3$};
  \node at (0,5) {row:};
  \node at (1.2,5) {3};
  \node at (1.2,3) {2};
  \node at (1.2,1) {1};
  \node at (gd) {$-$};
  \node at (gf) {$-$};
  \node at (gh) {$+$};
  \node at (gj) {$+$};
  \node at (gl) {$-$};
  \node at (fc) {$+$};
  \node at (fe) {$+$};
  \node at (fg) {$-$};
  \node at (fi) {$+$};
  \node at (fk) {$+$};
  \node at (fm) {$-$};
  \node at (ed) {$-$};
  \node at (ef) {$+$};
  \node at (eh) {$-$};
  \node at (ej) {$+$};
  \node at (el) {$+$};
  \node at (dc) {$+$};
  \node at (de) {$-$};
  \node at (dg) {$-$};
  \node at (di) {$-$};
  \node at (dk) {$-$};
  \node at (dm) {$-$};
  \node at (cd) {$+$};
  \node at (cf) {$+$};
  \node at (ch) {$-$};
  \node at (cj) {$+$};
  \node at (cl) {$+$};
  \node at (bc) {$+$};
  \node at (be) {$+$};
  \node at (bg) {$+$};
  \node at (bi) {$-$};
  \node at (bk) {$-$};
  \node at (bm) {$-$};
  \node at (ad) {$+$};
  \node at (af) {$+$};
  \node at (ah) {$+$};
  \node at (aj) {$+$};
  \node at (al) {$+$};
  \node at (11,7) {$0$};
  \node at (9,7) {$1$};
  \node at (7,7) {$2$};
  \node at (5,7) {$3$};
  \node at (3,7) {$4$};
  \node at (2,7.04) {column:};
  \node at (bcx) {$\scriptstyle 3$};
  \node at (bex) {$\scriptstyle 2$};
  \node at (bgx) {$\scriptstyle 1$};
  \node at (bix) {$\scriptstyle 0$};
  \node at (bkx) {$\scriptstyle 0$};
  \node at (bmx) {$\scriptstyle 0$};
  \node at (dcx) {$\scriptstyle 1$};
  \node at (dex) {$\scriptstyle 0$};
  \node at (dgx) {$\scriptstyle 0$};
  \node at (dix) {$\scriptstyle 0$};
  \node at (dkx) {$\scriptstyle 0$};
  \node at (dmx) {$\scriptstyle 0$};
  \node at (fcx) {$\scriptstyle 4$};
  \node at (fex) {$\scriptstyle 3$};
  \node at (fgx) {$\scriptstyle 2$};
  \node at (fix) {$\scriptstyle 2$};
  \node at (fkx) {$\scriptstyle 1$};
  \node at (fmx) {$\scriptstyle 0$};
\end{tikzpicture}}
\caption{An admissible state of metaplectic ice for the parameters $r = 3, N=5$, and  $\lambda=(2,2,0)$.
This gives $\lambda+\rho=(4,3,0)$, so top edges $4,3,0$ get $-$ spin. Each horizontal edge is labelled with its charge; note that this state is only $n_Q$-admissible for $n_Q = 1,2$.}
\label{icestate}
\end{figure}

\begin{Def}
We say that a state of metaplectic ice is \emph{admissible} if at each vertex, the choice of spins on the four adjacent edges matches one of the six configurations shown in Figure \ref{modwts}. We will only be considering admissible states for the remainder of this paper.
\end{Def}

Let $n_Q$ be a positive integer. We will later specify it to depend on our fixed $n$ and a quadratic form $Q$ as in Section \ref{metwhit}, but the results in this section are independent of this specification. We will name other variables in this section in a similarly suggestive manner; they may all be viewed as general parameters until we specify them in Section \ref{pfmain}.

\begin{Def}
Define the \emph{charge} at each horizontal edge to be the number of $+$ spins in its row that are on or to the right of this edge. (For example, all horizontal edges in  Figure \ref{icestate} are labelled with their charge.) We then define a state to be \emph{$n_Q$-admissible} if it is admissible and in addition, every horizontal edge with a $-$ spin has charge $\equiv 0 \pmod{n_Q}$. Restricting to these cases, we can then consider all our charges modulo $n_Q$.
\end{Def}

The \emph{Boltzmann weight} of a state is obtained by attaching a weight to each vertex and taking the product over all vertices in the state. Accordingly, label the vertices in the $i$-th row of the grid as $z_i$, numbering from bottom to top. In Figure \ref{modwts} we choose a set of weights depending on these $z_i$ for each type of admissible vertex. Note that these are a generalization of the modified Boltzmann weights from Section 5 of \cite{BBBIce}. In these figures, the weight assigned to a vertex in the $i$-th row of a state depends on $i$, the charge of the adjacent horizontal edges (mod $n_Q$), and the type of vertex (i.e. the spins of all four adjacent edges). To define these weights, we also need to define a pair of functions $g_Q$ and $\d$. For a fixed parameter $v$, let $g_Q(t)$ be a function on integers $t$ such that: $g_Q$ is periodic modulo $n_Q$, $g_Q(0) = -v$, and $g_Q(t)g_Q(n_Q-t) = v$ if $t\not\equiv 0 \pmod{n_Q}$. (Note that the $n_Q$-th Gauss sum discussed in Section \ref{metwhit} is such a function.) Let $\d(t)$ be the function on integers $t$ defined by
\[ \d(t) = \begin{cases} 1 &\text{ if }t\equiv 0\pmod{n_Q}\\ 0 &\text{ else }\end{cases}.\] 
We will denote the Boltzmann weight of a state $\mathfrak{s}$ as $B^{(n_Q)}(\mathfrak{s})$, in order to make clear the dependence on our choice of $n_Q$ and also maintain consistency with previous literature.

\begin{figure}[h]
\noindent\makebox[1\textwidth][c]{
\begin{tabular}{|@{\hspace{-4pt}}c@{\hspace{-4pt}}|@{\hspace{-4pt}}c@{\hspace{-4pt}}|@{\hspace{-3pt}}c@{\hspace{-3pt}}|@{\hspace{-4pt}}c@{\hspace{-4pt}}|@{\hspace{-4pt}}c@{\hspace{-4pt}}|@{\hspace{-4pt}}c@{\hspace{-4pt}}|}
\hline
$\tt{a}_1$&$\tt{a}_2$&$\tt{b}_1$&$\tt{b}_2$&$\tt{c}_1$&$\tt{c}_2$\\
\hline
$\begin{array}{c}\gammaice{+}{+}{+}{+}{a{+}1}{\mathmakebox[\widthof{$a{+}1$}]{a}}\\z_i^{-n_Q\delta(a)}\end{array}$ &
$\begin{array}{c}\gammaice{-}{-}{-}{-}{\mathmakebox[\widthof{$a{+}1$}]{0}}{\mathmakebox[\widthof{$a{+}1$}]{0}}\\ 1\vphantom{z_i^{-n\delta(a)}}\end{array}$ &
$\begin{array}{c}\gammaice{+}{-}{+}{-}{a{+}1}{\mathmakebox[\widthof{$a{+}1$}]{a}}\\g_Q(a)z_i^{-n_Q\delta(a)}\end{array}$ &
$\begin{array}{c}\gammaice{-}{+}{-}{+}{\mathmakebox[\widthof{$a{+}1$}]{0}}{\mathmakebox[\widthof{$a{+}1$}]{0}}\\ 1\vphantom{z_i^{-n_Q\delta(a+1)}}\end{array}$ &
$\begin{array}{c}\gammaice{-}{+}{+}{-}{\mathmakebox[\widthof{$a{+}1$}]{0}}{\mathmakebox[\widthof{$a{+}1$}]{0}}\\ (1-v) z_i^{-n_Q}\end{array}$ &
$\begin{array}{c}\gammaice{+}{-}{-}{+}{\mathmakebox[\widthof{$a{+}1$}]{1}}{\mathmakebox[\widthof{$a{+}1$}]{0}}\\1\end{array}$ \\
\hline
\end{tabular}}
\caption{The Boltzmann weights at a vertex for our ice model. The illustrated vertices are in the $i$-th row and have charge $\equiv a \pmod{n_Q}$. The Boltzmann weight of any other configuration is zero.}
\label{modwts}
\end{figure}

\begin{Example}
Let $\mathfrak{s}$ be the state shown in Figure \ref{icestate}, which is only $n_Q$-admissible for $n_Q = 1,2$. Suppose $n_Q = 2$. The Boltzmann weight of the top left vertex (row 3, column 4)  is type $\tt{b}_1$, so it has charge $g_Q(1)z_3^{-2\d(1)}$, which equals $g_Q(1)$. Reading left to right across the rest of row 3, we then have one type $\tt{c}_2$ vertex (weight $1$), one type $\tt{c}_1$ vertex (weight $(1-v)z_3^{-2}$), another type $\tt{a}_1$ vertex (weight $z_3^{-2\d(1)}$), and another type $\tt{c}_2$ vertex (weight $1$). So the total weight for this row is $-v(1-v)z_3^{-2}$. Continuing this process across the remaining rows, we calculate that the weight of the second row is $1$ and the weight of row one is $z_1^{-2}$. So, the total Boltzmann weight of our state is $B^{(n_Q)}(\mathfrak{s}) = -g_Q(1)(1-v)z_1^{-2}z_3^{-2}.$ \end{Example}

\begin{Def}
We may extend our notion of system conditions to include not only boundary spin requirements but also boundary charge requirements. In this case, we specify the vector $c=(c_1,...,c_r)$, where $c_i$ is the left boundary charge of the $i$-th row. Then, given a system $(\frak{S};c)$ of boundary conditions, the \emph{partition function} $Z(\frak{S};c)$ is
\[ Z(\frak{S};c) = \sum_{\frak{s}\in (\frak{S};c)} B^{(n_Q)}(\frak{s}).\]
\end{Def}

This ice model is closely related to the solvable six-vertex models of statistical mechanics, and we can use techniques from the study of six-vertex models, such as Yang-Baxter equations, in this context as well. One quick shift in perspective is necessary: our Boltzmann weights depend on the charge of horizontal edge, which is a global statistic. In order to use statistical mechanics techniques, we need vertex weights to be completely local, relying only on the incident edges. Thus, we will proceed with \emph{decorated spins} as in \cite{BBBIce}, where each edge is labelled with an ordered pair $(\sigma, a)$ of a spin $\sigma$ ($+$ or $-$) and some integer $a \pmod{n_Q}$. The point here is that we can consider the decoration $a$ as part of the data on a horizontal edge, rather than as a global statistic superimposed on our state. Since we will stay in the context of $n_Q$-admissible states, we will require that  if $\sigma = -$, we have $a\equiv 0\pmod{n_Q}$, and that adjacent horizontal edges have decorated spins $(\sigma,a)$ and $(\tau,b)$ appearing as one of the six admissible vertex states in Figure \ref{modwts}. We see then that if we set the decorations on the rightmost edges in a state to be identically $0$ and impose these restrictions, we recover the Boltzmann charge on a vertex as originally stated. Thus, we can consider Boltzmann weights as local, which allows us to calculate the weight of a subgraph of an $n_Q$-admissible state.

In this context, we view the transfer matrices associated with Yang-Baxter equations as the weighted $R$-vertices shown in Figure \ref{modR}. Attaching one of these diagonal vertices to an ice state allows us to switch the $i$-th and $j$-th strands at that point and evaluate what effect changing or not changing the boundary conditions on those strands has on the partition function of the total state.

\begin{figure}[h]
\noindent\makebox[1\textwidth][c]{
\begin{tabular}{|@{\hspace{-2pt}}c@{\hspace{-2pt}}|@{\hspace{-2pt}}c@{\hspace{-2pt}}|@{\hspace{-2pt}}c@{\hspace{-2pt}}|@{\hspace{-2pt}}c@{\hspace{-2pt}}|}\hline
\multicolumn{3}{|c|@{\hspace{-2pt}}}{$\tt{a}_1$}&$\tt{a}_2$\\\hline
$\begin{array}{c}\gamgam{+}{+}{+}{+}{a\vphantom{b}}{a\vphantom{b}}{a\vphantom{b}}{a\vphantom{b}}\\ \dfrac{-v + \boldsymbol{z}^{n_Q \alpha^\vee}}{1-v \boldsymbol{z}^{n_Q \alpha^\vee}} \vphantom{\dfrac{(1-v)}{1-v \boldsymbol{z}^{n_Q \alpha^\vee}} \cdot \left\{\hspace{-5pt}\begin{array}{ll}
\boldsymbol{z}^{n_Q\alpha^\vee}\kern-5pt & a>b, \\
1 & a<b \end{array}\hspace{-7pt}\right.}\end{array}$ & % a1
$\begin{array}{c}\gamgam{+}{+}{+}{+}{b}{a\vphantom{b}}{b}{a\vphantom{b}}\\ g_Q(a-b) \dfrac{1- \boldsymbol{z}^{n_Q \alpha^\vee}}{1-v \boldsymbol{z}^{n_Q \alpha^\vee}} \vphantom{\dfrac{(1-v)}{1-v \boldsymbol{z}^{n_Q \alpha^\vee}} \cdot \left\{\hspace{-5pt}\begin{array}{ll}
\boldsymbol{z}^{n_Q \alpha^\vee}\kern-5pt & a>b, \\
1 & a<b \end{array}\hspace{-7pt}\right.}\end{array}$ & % a1
$\begin{array}{c}\gamgam{+}{+}{+}{+}{b}{a\vphantom{b}}{a\vphantom{b}}{b}\\
\dfrac{(1-v)}{1-v \boldsymbol{z}^{n_Q \alpha^\vee}} \cdot \left\{\hspace{-5pt}\begin{array}{ll}
\boldsymbol{z}^{n_Q \alpha^\vee}\kern-5pt & a>b, \\
1 & a<b \end{array}\hspace{-7pt}\right.
\end{array}$ & % a1
$\begin{array}{c}\gamgam{-}{-}{-}{-}{0}{0}{0}{0}\\1\vphantom{\dfrac{(1-v)}{1-v \boldsymbol{z}^{n_Q \alpha^\vee}} \cdot \left\{\hspace{-5pt}\begin{array}{ll}
\boldsymbol{z}^{n_Q \alpha^\vee}\kern-5pt & a>b, \\
1 & a<b \end{array}\hspace{-7pt}\right.}\end{array}$\\ % a2
\hline\multicolumn{4}{|c|}{}\\[-12pt]\hline
$\tt{b}_1$&$\tt{b}_2$&$\tt{c}_1$&$\tt{c}_2$\\\hline
$\begin{array}{c}\gamgam{+}{-}{+}{-}{a\vphantom{0}}{0}{a\vphantom{0}}{0}\\ \dfrac{v(1 - \boldsymbol{z}^{n_Q \alpha^\vee})}{1-v \boldsymbol{z}^{n_Q \alpha^\vee}} \end{array} $& % b1
$\quad\begin{array}{c}\gamgam{-}{+}{-}{+}{0}{a\vphantom{0}}{0}{a\vphantom{0}}\\ \dfrac{1 - \boldsymbol{z}^{n_Q \alpha^\vee}}{1-v \boldsymbol{z}^{n_Q \alpha^\vee}} \end{array}$ & % b2
$\begin{array}{c}\gamgam{-}{+}{+}{-}{0}{a\vphantom{0}}{a\vphantom{0}}{0}\\ \dfrac{(1-v) \boldsymbol{z}^{n_Q \alpha^\vee}}{1-v \boldsymbol{z}^{n_Q \alpha^\vee}} \end{array}$ & % c1
$\begin{array}{c}\gamgam{+}{-}{-}{+}{a\vphantom{0}}{0}{0}{a\vphantom{0}}\\ \dfrac{(1-v)}{1-v \boldsymbol{z}^{n_Q \alpha^\vee}} \end{array}$ \\ % c2
\hline
\end{tabular}
}
\caption{\parindent=0pt\parskip=0pt Boltzmann weights for the $R$-vertex $R_{z_i,z_j}$. Here, let $\boldsymbol{z} = (z_1,...,z_n)$ and $\a^\vee$ the coroot with $\a^\vee_i = 1, \a^\vee_j = -1,$ and all other coordinates equal to zero. We assume that $b$ is not equal to $a$.}
\label{modR}
\end{figure}

To suit this purpose, we limit the types (i.e. choices of decorated spins on edges) of $R$-vertices allowed to those in Figure \ref{modR}. To calculate the Boltzmann weight of an ice model involving an $R$-vertex, treat the $R$-vertex like any other vertex and multiply its weight into the Boltzmann weight of the rest of the state. When we take the partition function of a system involving an $R$-vertex, we also consider all possible choices of $R$-vertices that suit the conditions of the system when creating our states.

\begin{Thm}\label{ybe1}
Fix the decorated spins $(\sigma,a), (\tau,b),(\theta,c),(\rho,d)$ and the spins $\alpha,\beta$ as boundary conditions as in the states below. Then, using the set of weights in Figures \ref{modwts} and \ref{modR}, the partition functions of the following two states are equal.

\vspace{-1cm}

\begin{align}\label{mybe}
\begin{array}{c} \lhs{\botcharge{\sigma}{a}}{\topcharge{\tau}{b}}{\beta}{\topcharge{\theta}{c}}{\botcharge{\rho}{d}}{\alpha}{\topcharge{\nu}{e}}{\botcharge{\mu}{f}}{\gamma} \end{array}
\hspace{1cm}
\begin{array}{c} \rhs{\botcharge{\sigma}{a}}{\topcharge{\tau}{b}}{\beta}{\topcharge{\theta}{c}}{\botcharge{\rho}{d}}{\alpha}{\botcharge{\psi}{h}}{\topcharge{\phi}{g}}{\delta} \end{array}
\end{align}
\end{Thm}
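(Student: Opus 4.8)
The statement is a parametrized Yang--Baxter equation for the $R$-vertex acting on two copies of the lattice-model row space, decorated by charges mod $n_Q$. My plan is to prove it by the standard ``train-track argument'': fix the six boundary decorated spins $(\sigma,a),(\tau,b),(\theta,c),(\rho,d)$ together with $\alpha,\beta$, and show that for every choice of these boundary data, the sum over admissible internal configurations on the left-hand diagram equals the corresponding sum on the right-hand diagram. Because each of the two configurations involves three vertices (two of type $\mathtt{a},\mathtt{b},\mathtt{c}$ from Figure~\ref{modwts} and one $R$-vertex from Figure~\ref{modR}), and the admissibility rules at each vertex are quite rigid, the number of nonzero internal states on each side is small --- typically at most two. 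The first step is therefore a careful bookkeeping step: enumerate, for each boundary type, which internal spin/charge assignments on the three interior edges are admissible at all three vertices, and record the resulting Boltzmann weight (a product of three entries from the two weight tables, each a rational function in $\boldsymbol{z}^{n_Q\alpha^\vee}$ and possibly a Gauss-sum factor $g_Q$).

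The second step is to reduce the number of cases using symmetry. As in \cite{BBBIce}, I would exploit the conservation law (number of $-$ spins, equivalently the total charge, is preserved across the $R$-vertex and across each row vertex), which immediately forces most boundary combinations to contribute zero on both sides. One can also use the ``spin-reversal'' symmetry that simultaneously swaps $+\leftrightarrow-$ and reverses left/right, plus the $n_Q$-periodicity of $g_Q$ and of the charge labels, to collapse the remaining cases into a handful of essentially distinct ones (roughly: all boundary spins $+$; exactly one $-$ on each side; the generic ``$\mathtt{c}$-type'' crossing case with $a\ne b$). For each surviving case the identity to verify is a rational-function identity: after clearing the common denominator $(1-v\boldsymbol{z}^{n_Q\alpha^\vee})$ coming from the $R$-matrix entries, both sides become polynomials in $\boldsymbol{z}^{n_Q\alpha^\vee}$ with coefficients in the ring generated by $v$ and the Gauss sums $g_Q(t)$, and they agree using only the two defining relations $g_Q(0)=-v$ and $g_Q(t)g_Q(n_Q-t)=v$ for $t\not\equiv0$.

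The third step, which I expect to be where the real content lies and which is the main obstacle, is the generic crossing case where the incoming horizontal decorations $a,b$ are distinct mod $n_Q$: here the $R$-vertex can be of type $\mathtt{a}_2$ or $\mathtt{c}_1/\mathtt{c}_2$ (producing a Gauss-sum factor $g_Q(a-b)$), and both the left and right diagrams admit two internal states whose weights must be summed before the identity holds. This is precisely the place where the $g_Q$-multiplicativity relation is used, and where the inequality-dependent piece $\bigl(\boldsymbol{z}^{n_Q\alpha^\vee}$ if $a>b$, else $1\bigr)$ in the type-$\mathtt{a}_2$ weight interacts with the charge arithmetic; getting the branches ($a>b$ versus $a<b$, and the wraparound $a-b\equiv0$) to match consistently on both sides is the delicate point. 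I would organize this by treating $a-b\bmod n_Q$ as the running parameter and checking the two sub-branches separately, then confirming they glue. Once all cases are dispatched, summing the verified per-case identities over the (finitely many) admissible internal configurations yields $Z(\text{LHS})=Z(\text{RHS})$, completing the proof.
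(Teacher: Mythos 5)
Your proposal is a valid route, but it is not the route the paper takes. The paper's actual proof does not perform a direct case-by-case verification as its argument; instead it observes that the weights of Figures~\ref{modwts} and~\ref{modR} are obtained from those of \cite{BBBIce} (whose Theorem~3.1 is the analogous Yang--Baxter equation) by a gauge transformation --- a change of basis on each state space $V(z)$ via a function $f(\a,z)$, which multiplies the vertex and $R$-vertex weights by the ratios in~(\ref{frecipe}) --- followed by an overall rescaling of the vertex weights by $z_i^{-1}$ and of the $R$-weights by $(1-v(z_i/z_j)^{n_Q})^{-1}$. Since gauge transformations and global rescalings both manifestly preserve the Yang--Baxter identity, the theorem follows immediately from the corresponding result in \cite{BBBIce} without re-checking any cases. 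The brute-force verification you outline is exactly what the paper relegates to the Appendix (Section~\ref{appendix}), explicitly ``for the sake of completeness.'' So your approach would prove the theorem, but it misses the structural shortcut that is the actual content of the proof: recognizing the weights as gauge-equivalent to known ones.

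Two smaller points worth flagging. First, your proposed symmetry reduction via ``spin-reversal that swaps $+\leftrightarrow-$'' is suspect: the weights in Figure~\ref{modwts} are not symmetric under spin reversal (e.g.\ $\mathtt{a}_1$ carries a factor $z_i^{-n_Q\delta(a)}$ while $\mathtt{a}_2$ has weight $1$; $\mathtt{c}_1$ carries $(1-v)z_i^{-n_Q}$ while $\mathtt{c}_2$ has weight $1$), so this symmetry does not hold in the naive form and cannot be used to collapse cases. In fact the Appendix checks all $32$ boundary-spin choices (with charge subcases), so if you pursue the direct route you should expect to do the same, relying only on charge conservation to rule out inadmissible boundaries. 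Second, your plan is still at the sketch stage --- you identify the generic-crossing case with $a\not\equiv b\pmod{n_Q}$ as the delicate one and correctly anticipate that the relations $g_Q(0)=-v$ and $g_Q(t)g_Q(n_Q-t)=v$ are what make it close, but the full argument requires actually carrying out those verifications, which is where the Appendix's effort lies.
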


\begin{proof} In \cite{BBBIce}, Theorem 3.1, Brubaker, Buciumas, and Bump prove the analogous theorem for a slightly different set of Boltzmann weights. The weights in this paper are obtained from theirs by a change of basis and a slight modification. We also set their $n$ to equal our $n_Q$ and their $g(a)$ to equal our $g_Q(a)$. 

Given $z\in \C,z\neq 0$, define $V(z)$ to be the $(n_Q+1)$-dimensional vector space spanned by $v_\a = v_\a(z)$, where $\a$ runs through the decorated spins $\{-0,+1,....,+n_Q\}$.
We may view the vertices in Figure \ref{modwts} as acting on the space $V(z_i)$ and the $R$-vertices in Figure \ref{modR} as acting on the space $V(z_i)\otimes V(z_j)$. Thus, changing the basis we use for these spaces changes the weight attached to a vertex: let $f(\a,z)$ be a function on $\a$ a decorated spin and $z\in \C$. Then, take the Boltzman weights from Figures \ref{modwts} and \ref{modR}, with $z_i$ and $z_j$ as in those figures, of the following vertices:

\[\vcenter{\hbox to .7in{\gamgam{\beta}{\alpha}{\delta}{\gamma}{}{}{}{}}},\qquad\qquad
\vcenter{\hbox to .7in{\gammaice{\alpha}{\pm}{\beta}{\pm}{}{}{}{}}}.\]

We multiply these weights, respectively, by 

\begin{align}
\label{frecipe}
\frac{f (\alpha, z_i) f (\beta, z_j)}{f (\gamma, z_i) f (\delta, z_j)},\qquad\qquad
\frac{f(\alpha,z_i)}{f(\beta,z_i)},
\end{align}
and the result gives us a new set of Boltzmann weights, that still satisfy all the same equations as the original sets. Thus, since the original weights in \cite{BBBIce} satisfied the Yang Baxter equation in (\ref{mybe}), the new weights after change of basis will as well. 

We choose the function
\[ f(\a,z) = \begin{cases} z^a &\text{ if }\a = +a,0< a\leq n,\\ 1&\text{ if }\a =-0 \end{cases}\]
and apply this change of basis to the unmodified weights in \cite{BBBIce}. We divide the resulting vertex weights by $z_i$  and the resulting $R$-vertex weights by $1-v(z_i/z_j)^{n_Q}$ to obtain the weights in Figures \ref{modwts} and \ref{modR}, respectively. This division does not affect the Yang Baxter equation, since it happens on both sides, so  (\ref{mybe}) holds with our weights. Note that the original proof in \cite{BBBIce} checks all 32 possible cases of boundary spin choice, which can be found in an older version of the paper. We present a particularly interesting case of this calculation with our weights below, and include the rest of the modified calculations for our weights in the Appendix (Section \ref{appendix}) for the sake of completeness. 
\end{proof}

\begin{Example}\label{nicecase}
Consider the boundary spin conditions in Case 10 (as numbered in Section \ref{appendix}): with the boundary spins $(\sigma,\tau,\b,\theta,\rho,\a) = (+,+,-,+,-,+)$. We have three subcases, based on the choice of $(a,b,c,d)$. 

\textbf{Case 10a:} For $k\nequiv n_Q$, suppose that the decorated spins on the boundary are as follows:

\begin{center}
\begin{tabular}{|c|c|c|c|c|c|}\hline$a$&$b$&&$c$&$d$&\\$\sigma$&$\tau$&$\b$&$\theta$&$\rho$&$\a$\\\hline$1$&$k+1$&&$k$&$0$&\\+&+&$-$&+&$-$&+ \\\hline\end{tabular}
\end{center}

We can complete these boundary conditions to a state in only one way on each side of the figures in Equation (\ref{mybe}):

\begin{align}\label{mybeexample}
\begin{array}{c} \lhs{\botcharge{+}{1}}{\topcharge{+}{k+1}}{-}{\topcharge{+}{k}}{\botcharge{-}{0}}{+}{\topcharge{+}{k+1}}{\botcharge{+}{1}}{-} \end{array}
\hspace{1cm}
\begin{array}{c} \rhs{\botcharge{+}{1}}{\topcharge{+}{k+1}}{-}{\topcharge{+}{k}}{\botcharge{-}{0}}{+}{\botcharge{-}{0}}{\topcharge{+}{k}}{-} \end{array}
\end{align}

Writing this information in a more compact format, we have:

\begin{center}
\text{ left hand side} \hspace{4cm} \text{ right hand side}\\
\begin{tabular}{|c|c|c|c|}
\hline $e$&$f$&&\multirow{2}{*}{weight}\\$\nu$&$\mu$&$\g$&\\\hline\small{$k+1$}&\small{$1$}&&\multirow{2}{*}{\small{$\dfrac{g_Q(k)(1-v)\zn}{1-v\zn}$}}\\+&+&$-$&\\\hline
\end{tabular}
\hspace{0.5cm}
\begin{tabular}{|c|c|c|c|}
\hline $g$&$h$&&\multirow{2}{*}{weight}\\$\phi$&$\psi$&$\d$&\\
\hline\small{$k$}&\small{0}&&\multirow{2}{*}{\small{$\dfrac{g_Q(k)(1-v)\zn}{1-v\zn}$}}\\+&$-$&$-$&\\\hline\end{tabular}
\end{center}

Since the weight on each side is the same, the Yang Baxter equation is satisfied.

\textbf{Case 10b:} The second case with these boundary spin conditions also assumes $k\nequiv n_Q$. Then, assign $(a,b,c,d)$ as

\begin{center}
\begin{tabular}{|c|c|c|c|c|c|}\hline$a$&$b$&&$c$&$d$&\\$\sigma$&$\tau$&$\b$&$\theta$&$\rho$&$\a$\\\hline$k+1$&$1$&&$k$&$0$&\\+&+&$-$&+&$-$&+ \\\hline\end{tabular}
\end{center}
Again, we only have one state on each side of Equation (\ref{mybe}), which we write as tables, as we did above:

\begin{center}
\text{ left hand side} \hspace{4cm} \text{ right hand side}\\
\begin{tabular}{|c|c|c|c|}
\hline $e$&$f$&&\multirow{2}{*}{weight}\\$\nu$&$\mu$&$\g$&\\\hline\small{$k+1$}&\small{$1$}&&\multirow{2}{*}{\small{$\dfrac{g_Q(-k)g_Q(k)(1-\zn)}{1-v\zn}$}}\\+&+&$-$&\\\hline
\end{tabular}
\hspace{0.5cm}
\begin{tabular}{|c|c|c|c|}
\hline $g$&$h$&&\multirow{2}{*}{weight}\\$\phi$&$\psi$&$\d$&\\
\hline\small{$0$}&\small{$k$}&&\multirow{2}{*}{\small{$\dfrac{v(1-\zn)}{1-v\zn}$}}\\$-$&+&+&\\\hline\end{tabular}
\end{center}

Since we required our function $g_Q$ to satisfy $g_Q(a)g_Q(-a) = v$, the weights are the same.

\textbf{Case 10c:} We now account for the case where $k \equiv 0\pmod{n_Q}$, in which we have multiple states on the right hand side. Suppose our boundary spins are decorated as:

\begin{center}
\begin{tabular}{|c|c|c|c|c|c|}\hline$1$&$1$&&$0$&$0$&\\+&+&$-$&+&$-$&+ \\\hline\end{tabular}
\end{center}

Then our states complete as:

\begin{center}
\text{ left hand side} \hspace{4cm} \text{ right hand side}\\
\begin{tabular}{|c|c|c|c|}\hline $e$&$f$&&\multirow{2}{*}{weight}\\$\nu$&$\mu$&$\g$&\\
\hline\small{$1$}&\small{$1$}&&\multirow{2}{*}{\small{$\dfrac{-vz_j^{-n_Q}(-v+\zn)}{1-v\zn}$}}\\+&+&$-$&\\\hline
\end{tabular}
\hspace{0.5cm}
\begin{tabular}{|c|c|c|c|}
\hline $g$&$h$&&\multirow{2}{*}{weight}\\$\phi$&$\psi$&$\d$&\\
\hline\small{$0$}&\small{0}&&\multirow{2}{*}{\small{$\dfrac{-v(1-v)z_i^{-n_Q}\zn}{1-vq\zn}$}}\\+&$-$&$-$&\\\hline\small{$0$}&\small{0}&&\multirow{2}{*}{\small{$\dfrac{vz_j^{-n_Q}(1-\zn)}{1-v\zn}$}}\\$-$&+&+&\\\hline
\end{tabular}
\end{center}
Note that the weights on the right hand side add to equal the weight obtained from the left hand side, so our partition functions are equal.
\end{Example}

There are other types of Yang-Baxter equations satisfied by these weights.

\begin{Thm}\label{ybe2}
Fix boundary conditions $\a,\b,\gamma,\d,\epsilon,\phi$. Then, using the weights in Figures \ref{modwts} and \ref{modR}, the partition functions of the following two systems are equal.

\begin{align} \label{paramybe} \begin{array}{c} \resizebox{5cm}{4cm}{
\begin{tikzpicture}[baseline=3pt]
\coordinate (a1) at (0,0);
\coordinate (b1) at (0,2);
\coordinate (c1) at (0,4);
\coordinate (a2) at (2,0);
\coordinate (b2) at (2,2);
\coordinate (c2) at (2,4);
\coordinate (a3) at (4,0);
\coordinate (b3) at (4,2);
\coordinate (c3) at (4,4);
\coordinate (a4) at (6,0);
\coordinate (b4) at (6,2);
\coordinate (c4) at (6,4);
\coordinate (r) at (3,3);
\coordinate (s) at (1,1);
\coordinate (t) at (5,1);
\draw (a1) to [out=0,in=180] (b2) to [out=0,in=180] (c3) to [out=0,in=180] (c4);
\draw (b1) to [out=0,in=180] (a2) to [out=0,in=180] (a3) to [out=0,in=180] (b4);
\draw (c1) to [out=0,in=180] (c2) to [out=0,in=180] (b3) to [out=0,in=180] (a4);
\draw[fill=white] (a1) circle (.25);
\draw[fill=white] (b1) circle (.25);
\draw[fill=white] (c1) circle (.25);
\draw[fill=white] (a4) circle (.25);
\draw[fill=white] (b4) circle (.25);
\draw[fill=white] (c4) circle (.25);
\path[fill=white] (r) circle (.25);
\path[fill=white] (s) circle (.25);
\path[fill=white] (t) circle (.25);
\node at (t) {$R_{z_i,z_j}$};
\node at (r) {$R_{z_i,z_k}$};
\node at (s) {$R_{z_j,z_k}$};
\node at (a1) {$\alpha$};
\node at (b1) {$\beta$};
\node at (c1) {$\gamma$};
\node at (c4) {$\delta$};
\node at (b4) {$\epsilon$};
\node at (a4) {$\phi$};
\end{tikzpicture}} \end{array} \hspace{1cm}
\begin{array}{c} \resizebox{5cm}{4cm}{
\begin{tikzpicture}[baseline=3pt]
\coordinate (a1) at (0,0);
\coordinate (b1) at (0,2);
\coordinate (c1) at (0,4);
\coordinate (a2) at (2,0);
\coordinate (b2) at (2,2);
\coordinate (c2) at (2,4);
\coordinate (a3) at (4,0);
\coordinate (b3) at (4,2);
\coordinate (c3) at (4,4);
\coordinate (a4) at (6,0);
\coordinate (b4) at (6,2);
\coordinate (c4) at (6,4);
\coordinate (r) at (3,1);
\coordinate (s) at (1,3);
\coordinate (t) at (5,3);
\draw (c1) to [out=0,in=180] (b2) to [out=0,in=180] (a3) to [out=0,in=180] (a4);
\draw (b1) to [out=0,in=180] (c2) to [out=0,in=180] (c3) to [out=0,in=180] (b4);
\draw (a1) to [out=0,in=180] (a2) to [out=0,in=180] (b3) to [out=0,in=180] (c4);
\draw[fill=white] (a1) circle (.25);
\draw[fill=white] (b1) circle (.25);
\draw[fill=white] (c1) circle (.25);
\draw[fill=white] (a4) circle (.25);
\draw[fill=white] (b4) circle (.25);
\draw[fill=white] (c4) circle (.25);
\path[fill=white] (r) circle (.25);
\path[fill=white] (s) circle (.25);
\path[fill=white] (t) circle (.25);
\node at (s) {$R_{z_i,z_j}$};
\node at (r) {$R_{z_i,z_k}$};
\node at (t) {$R_{z_j,z_k}$};
\node at (a1) {$\alpha$};
\node at (b1) {$\beta$};
\node at (c1) {$\gamma$};
\node at (c4) {$\delta$};
\node at (b4) {$\epsilon$};
\node at (a4) {$\phi$};
\end{tikzpicture}} \end{array}
\end{align}
\end{Thm}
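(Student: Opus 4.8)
The plan is to reduce \eqref{paramybe} to the corresponding braided Yang--Baxter equation proved in \cite{BBBIce}, by exactly the local change of basis already used in the proof of Theorem \ref{ybe1}, rather than enumerating the hexagon of decorated-spin boundary conditions by hand. Brubaker, Buciumas and Bump establish \eqref{paramybe} for their (unmodified) $R$-weights --- ultimately because those $R$-vertices come from a quasitriangular Hopf algebra, whose universal Yang--Baxter relation a Drinfeld twist preserves --- and our $R$-vertices of Figure \ref{modR} are obtained from theirs by (i) conjugating by the diagonal gauge $\alpha\mapsto f(\alpha,z)$ of \eqref{frecipe}, (ii) dividing the weight of each $R$-vertex by $1-v\boldsymbol{z}^{n_Q\alpha^\vee}$ for the coroot $\alpha^\vee$ attached to that vertex, and (iii) renaming their $n$ and $g(a)$ as our $n_Q$ and $g_Q(a)$. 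So the first step is to record that (i) and (ii) are invisible to the braid equation. In \eqref{paramybe} there are no monochromatic vertices and exactly three $R$-vertices, meeting along three internal edges; each internal edge is the outgoing leg of one $R$-vertex and the incoming leg of another, so when the three gauge ratios --- one per $R$-vertex, each a quotient of the $f$-values at its four legs, the two incoming in the numerator and the two outgoing in the denominator --- are multiplied together, the internal $f$-factors cancel and what survives is a single scalar built only from the six boundary decorations $\alpha,\beta,\gamma,\delta,\epsilon,\phi$ and the assignment of $z_i,z_j,z_k$ to the three strands. Since the braid move carries each strand, hence its parameter, from the left configuration to the right one unchanged, this scalar is the same on both sides; likewise the three normalizing denominators in (ii), one for each unordered pair from $\{i,j,k\}$, occur on both sides. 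Hence the two partition functions in \eqref{paramybe} differ from the two \cite{BBBIce} partition functions by one and the same nonzero factor, and their equality is inherited.

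The second step is just to make that bookkeeping precise: fix boundary decorations consistent with the admissible vertices of Figure \ref{modR}, observe that (as in Case~10 of Example \ref{nicecase}) the interior of each diagram in \eqref{paramybe} is then completed according to the admissibility constraints, and verify that the overall gauge scalar is literally $\prod_e f(e,z_{\mathrm{strand}(e)})^{\pm 1}$ over the six boundary edges $e$, the sign recording whether $e$ is incoming or outgoing on its strand, with the same value on both sides. After that the result is verbatim the statement of \cite{BBBIce}. If one prefers not to quote \cite{BBBIce} for the braid equation itself, there is the classical alternative: Theorem \ref{ybe1} is an $\mathrm{RLL}$-type relation, expressing that $\tau R_{z_i,z_j}$ intertwines a row transfer operator with its two spectral parameters swapped, and combined with the generic invertibility of $R_{z_i,z_j}$ (immediate from the weights of Figure \ref{modR}, whose determinant is a nonzero rational function of $\boldsymbol{z}$) and the completeness of the family of monochromatic $L$-operators, the standard train argument (cf.\ Jimbo \cite{Jimbo}, Frenkel--Reshetikhin \cite{FR}) yields \eqref{paramybe}; alternatively, \eqref{paramybe} also drops out of the quantum-group description of Section \ref{Drin}, since the Drinfeld twist of the $\Uv{1|n_Q}$ evaluation $R$-matrix still satisfies the universal Yang--Baxter relation.

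I expect the main obstacle to be organizational rather than conceptual: verifying that the gauge factor and the three normalizing denominators cancel \emph{uniformly} across the two sides of \eqref{paramybe} requires being careful that the braid move really does match strands, parameters and boundary decorations as claimed, and --- in the direct verification, or in checking the special boundary decorations where some edge charge equals $0$ (the analogue of Case~10c, where several interior states appear on one side) --- one must keep track of the $\d$-function constraints and of the Gauss-sum identity $g_Q(a)\,g_Q(n_Q-a)=v$ so that the interior states on one side sum to the single weight on the other.
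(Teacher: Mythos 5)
Your proposal is essentially correct, but your primary route differs from the paper's. The paper does \emph{not} prove Theorem \ref{ybe2} by gauge-reducing to the RRR relation of \cite{BBBIce}. Instead, the proof given in Section \ref{Drin} goes directly through Kojima's $R$-matrix for $\Uv{1|n_Q}$: starting from Lemma \ref{Kybes} (the graded Yang--Baxter equations $R_{12}R_{13}R_{23}=R_{23}R_{13}R_{12}$ and the unitarity relation), the paper applies (1) the Drinfeld twist by the explicit $F$ of Equation (\ref{F}), which preserves quasitriangularity and hence the graded YBE, and (2) the sign change $R^{j_1,j_2}_{k_1,k_2}\mapsto(-1)^{[v_{k_1}][v_{k_2}]}R^{j_1,j_2}_{k_1,k_2}$ passing from the graded to the ungraded (Perk--Schultz) $R$-matrix; the resulting weights are verified entry-by-entry in Figure \ref{kojrmx} to be exactly those of Figure \ref{modR} under the substitution $q^2=v$, $z=\zn$. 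That is, the paper's proof is precisely the ``third alternative'' you toss off in passing, not your primary strategy.

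Your primary strategy --- inherit the RRR relation from \cite{BBBIce} via the diagonal gauge $f(\alpha,z)$ plus division by $1-v\zn$ --- is sound in principle: the gauge factor is multiplicative over vertices, internal-edge $f$-factors cancel because an outgoing leg of one $R$-vertex is the incoming leg of the next, the six boundary $f$-factors and the three normalizing denominators $1-v(z_i/z_j)^{n_Q}$, $1-v(z_i/z_k)^{n_Q}$, $1-v(z_j/z_k)^{n_Q}$ are literally the same on both sides of \eqref{paramybe}, and a diagonal gauge plus a global scalar preserves the braid relation. But this route has two quiet dependencies you should surface. First, you need \cite{BBBIce} to actually \emph{state} the RRR equation for their $R$-weights in generality sufficient to carry the renaming $n\to n_Q$, $g\to g_Q$; \cite{BBBIce} works with the single cover $B=B_{1,1}$, so strictly one must observe that their argument (which itself goes through the superalgebra $R$-matrix) is parametric in $n$ and in any function $g$ with $g(0)=-v$, $g(a)g(n-a)=v$. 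Second, the paper's route through Kojima has the advantage of not depending on the choice of gauge at all, and it makes the grading bookkeeping explicit: the passage from Kojima's graded YBE to the ungraded one for the Perk--Schultz weights affects only the all-minus vertex (case IV), a sign that the gauge would not see --- so if you go via \cite{BBBIce} you are implicitly relying on them having already done that step. Neither point is fatal; they are exactly the kind of organizational care your last paragraph anticipates, so once you either cite the relevant statement of \cite{BBBIce} explicitly or fall back on your own third alternative, the argument closes.
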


Theorem  \ref{ybe2} gives an example of a parametrized Yang-Baxter equation. We will delay the proof of this theorem, as well as the next, to Section \ref{Drin}, in order to make use of the relationship between our weights and the $R$-matrices attached to a particular quantum group.

\begin{Thm}\label{ybe3}
Fix boundary conditions $\a,\b,\gamma,\d$. Then the partition function of 
\[\begin{tikzpicture}
\coordinate (a1) at (0,0);
\coordinate (b1) at (0,2);
\coordinate (a2) at (2,0);
\coordinate (b2) at (2,2);
\coordinate (a3) at (4,0);
\coordinate (b3) at (4,2);
\coordinate (r) at (1,1);
\coordinate (s) at (3,1);
\draw (a1) to [out=0,in=180] (b2) to [out=0,in=180] (a3);
\draw (b1) to [out=0,in=180] (a2) to [out=0,in=180] (b3);
\draw[fill=white] (a1) circle (.25);
\draw[fill=white] (b1) circle (.25);
\draw[fill=white] (a3) circle (.25);
\draw[fill=white] (b3) circle (.25);
\path[fill=white] (r) circle (.25);
\path[fill=white] (s) circle (.25);
\node at (r) {$\scriptstyle R_{z_i,z_j}$};
\node at (s) {$\scriptstyle R_{z_j,z_i}$};
\node at (a1) {$\alpha$};
\node at (b1) {$\beta$};
\node at (a3) {$\gamma$};
\node at (b3) {$\delta$};
\end{tikzpicture}\]
equals
\[\left\{\begin{array}{ll}1&\text{if $\alpha=\gamma$,
  $\beta=\delta$}\\ 0 &\text{otherwise.}
\end{array}\right.\]
\end{Thm}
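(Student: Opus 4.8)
The plan is to recognize the identity as the \emph{unitarity} (inversion) relation for the $R$-vertex and to prove it by transporting the question to the quantum-group side developed in Section~\ref{Drin}. Recall from the proof of Theorem~\ref{ybe1} that $V(z)$ denotes the $(n_Q+1)$-dimensional space spanned by the decorated spins $v_\a(z)$, $\a\in\{-0,+1,\dots,+n_Q\}$, that an $R$-vertex is read as a linear operator, and that $\tau R_{z_i,z_j}\colon V(z_i)\otimes V(z_j)\to V(z_j)\otimes V(z_i)$. Stacking the two $R$-vertices in the displayed diagram and summing over the two internal decorated edges is, by definition of the partition function, exactly the matrix in the decorated-spin basis of the composite
\[
V(z_i)\otimes V(z_j)\;\xrightarrow{\ \tau R_{z_i,z_j}\ }\;V(z_j)\otimes V(z_i)\;\xrightarrow{\ \tau R_{z_j,z_i}\ }\;V(z_i)\otimes V(z_j).
\]
So the theorem is equivalent to the assertion that this composite equals the identity map, since the $((\g,\d),(\a,\b))$ matrix entry of $\mathrm{id}$ is $1$ when $\a=\g,\ \b=\d$ and $0$ otherwise.

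To prove the composite is the identity I would argue by Schur's lemma. In Section~\ref{Drin} the $R$-vertex weights of Figure~\ref{modR} are identified, up to the scalar normalization recorded in the proof of Theorem~\ref{ybe1}, with the Drinfeld-twisted universal $R$-matrix of $\Uv{1|n_Q}$ evaluated on the evaluation modules attached to $z_i$ and $z_j$; in particular $\tau R_{z_i,z_j}$ and $\tau R_{z_j,z_i}$ are $\Uv{1|n_Q}$-module homomorphisms. Hence the composite above is a module endomorphism of $V(z_i)\otimes V(z_j)$. For generic values of the ratio $z_i/z_j$ this tensor product of evaluation modules is irreducible (the usual genericity statement for evaluation modules of quantum affine (super)algebras; cf. Jimbo~\cite{Jimbo} and Frenkel--Reshetikhin~\cite{FR}), so the composite is a scalar operator $c(z_i,z_j)\cdot\mathrm{id}$. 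The entries of $R_{z_i,z_j}$ are rational functions of $z_i,z_j$, hence so is $c$, and it suffices to verify $c\equiv 1$ on the generic locus.

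The scalar is pinned down by evaluating on one convenient vector, namely the ``all minus'' vector $v_{-0}(z_i)\otimes v_{-0}(z_j)$. Inspection of Figure~\ref{modR} shows that the only admissible $R$-vertex carrying $-$ spins on both left-hand legs is type $\tt{a}_2$, which has weight $1$ and leaves all four legs in the state $-0$. Applying this first for $R_{z_i,z_j}$ and then for $R_{z_j,z_i}$ shows the composite fixes $v_{-0}(z_i)\otimes v_{-0}(z_j)$, so $c=1$, the composite is the identity, and the stated evaluation of the partition function follows. The hard part is not this last computation but the two ingredients it rests on, both of which are the business of Section~\ref{Drin}: verifying that the indicated Drinfeld twist really produces the weights of Figure~\ref{modR} while keeping $\tau R$ a module map, and invoking the correct genericity/irreducibility statement for the \emph{twisted} evaluation modules of the superalgebra so that the (graded) Schur lemma applies.

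Finally, I would note a self-contained alternative that sidesteps representation theory: each of the eight admissible $R$-vertex types preserves the multiset of spins on its two left-hand legs (and likewise on the right), so the boundary data $\a,\b,\g,\d$ splits into only three families according to spin pattern (two $-$ spins; two $+$ spins; one of each), in each family the internal state is determined by at most two choices, and the resulting short sum of products of weights from Figure~\ref{modR} collapses to the claimed value using $g_Q(t)g_Q(n_Q-t)=v$ together with $(z_i/z_j)^{n_Q}\cdot(z_j/z_i)^{n_Q}=1$. The ``two $+$ spins'' family, which carries extra charge parameters, is the only mildly laborious case.
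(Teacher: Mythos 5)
Your proposal is correct but follows a genuinely different logical route from the paper. The paper proves Theorem \ref{ybe3} directly from Lemma \ref{Kybes}: it cites Kojima's inversion relation $R_{i,j}(z) R_{j,i}(1/z) = 1$ (Equation (\ref{Kybe3})) for the untwisted $\widehat{\mathfrak{gl}}(1|n_Q)$ $R$-matrix, and then observes that the Drinfeld twist by $F$ together with the Perk--Schultz sign modification, which transform Kojima's weights into exactly those of Figure \ref{modR} under $q^2=v$, $z=\boldsymbol{z}^{n_Q\alpha^\vee}$, both preserve this relation. No Schur's lemma or irreducibility is needed; the inversion relation is simply inherited through the twist. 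Your argument, namely that $\tau R_{z_j,z_i}\circ\tau R_{z_i,z_j}$ is a module endomorphism of a generically irreducible twisted tensor product, hence a scalar, pinned to $1$ by evaluating on $v_{-0}\otimes v_{-0}$, is a valid independent re-derivation of the same unitarity fact, but it relies on the extra input of generic irreducibility of $V(z_i)\otimes V(z_j)$ (and on the graded version of Schur's lemma, since the identification passes through the Perk--Schultz sign flip from the graded to the ungraded $R$-matrix), neither of which the paper needs to invoke. One small imprecision worth flagging: the identification of Figure \ref{modR} with the Drinfeld-twisted Kojima $R$-matrix in Section \ref{Drin} is exact, not "up to the scalar normalization recorded in the proof of Theorem \ref{ybe1}"; that normalization (division by $1-v(z_i/z_j)^{n_Q}$) is relative to the weights of \cite{BBBIce}, whereas Kojima's weights already carry the denominator $1-q^2z$. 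So there is no leftover multiplicative factor for the Schur argument to track. Your self-contained eight-case fallback is also consistent with the paper's general style, since Theorem \ref{ybe1} is verified case by case in the Appendix, but the paper deliberately routes Theorems \ref{ybe2} and \ref{ybe3} through the quantum group to emphasize that these are the equations with a quantum interpretation.
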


\begin{Remark} We have to check Theorem \ref{ybe1} by hand because we do not have a quantum interpretation for this Yang-Baxter equation, due to the fact that the vertical strand would correspond to a 2-dimensional quantum group module, and our quantum group has appears to have no such module. We could similarly check Theorems \ref{ybe2} and \ref{ybe3} by hand, but the connection to the quantum group, which is a natural algebraic source for solutions to Yang-Baxter equations, suggests recourse to a more direct proof. Note also that it was reasonable to expect an RRR relation, as in (\ref{paramybe}), because the intertwining operators satisfy braid relations. These results suggest that there may exist solvable lattice models with similar Yang-Baxter equations mimicking the relations of intertwining operators over other metaplectic groups.
\end{Remark}

\section{Relation to $\widehat{\mathfrak{gl}}(1|n)$ and the Drinfeld Twisting}\label{Drin}

One interesting property of quantum groups is that they are a natural source of solutions to the Yang-Baxter equations. In \cite{PS}, Perk and Schultz found new solutions to the Yang-Baxter equations, which were then shown to be related to the $R$-matrix of a quantum group by Yamane \cite{Yamane}; Zhang \cite{Zhang} then studied the affine version of this quantum group and its $R$-matrix. Bazhanov and Shadrikov \cite{BStriangle} introduced graded (supersymmetric) versions of the Yang-Baxter equations; we use Kojima \cite{Kojima} as a convenient reference for our case.

We will relate the $R$-vertex weights of Figure \ref{modR} to the universal $R$-matrix of the affine quantum group $U:= U_{\sqrt{v}}(\widehat{\frak{gl}}(1|n_Q)).$ All of our modifications will preserve the Yang-Baxter equations presented in Theorems \ref{ybe2} and \ref{ybe3}. Note that $U$ is a quasitriangular Hopf superalgebra, that is, a $\Z/2\Z$-graded Hopf algebra equipped with comultiplication $\Delta$, antipode $S$, and an invertible element $R \in U \widehat{\otimes} U$, which we call the universal $R$-matrix, such that
\begin{align*}
(\Delta\otimes \text{id}) R &= R_{13}R_{23},\\
(\text{id} \otimes \Delta) R &= R_{13}R_{12}, \\
\Delta^{op}(x) = R \Delta(x) R^{-1},& \hspace{0.5cm} \forall x \in \Uq{1|n_Q}
\end{align*}
where $\Delta^{op}(x) = \tau \Delta(x)$ and $\tau(a\otimes b) = b\otimes a$. In the first two equations, $R_{ij}$ is understood to be acting on the $i$- and $j$-th components of a tensor product of 3 modules. We refer the reader to Chari and Pressley \cite{CP}, Chapters 6 and 9 for an explicit description of generators and relations for this quantum group. In addition, Kojima presents in \cite{Kojima} an explicit formula for the universal $R$-matrix of $U$ achieved using this construction.

In order to examine Kojima's universal $R$-matrix, we must first understand the structure of certain  $U$-modules. For every $z\in \C^\times$, there is an $(1|n_Q)$-dimensional evaluation $U$-module $V_z$ with basis $v_0,...,v_{n_Q}$. Associate the decorated spins from Section \ref{metice} with the basis elements $v_0,...,v_{n_Q}$; that is, we assign $-0$ to $v_0$, and $+1,....,+n_Q$ to $v_1,...,v_{n_Q}$. Note that this matches the assignment discussed in Section \ref{intro}, with the addition of the basis element $v_0$ to match the addition of the decorated spin $-0$. Recall that $+0 = +n_Q$; we choose this representative to match the ordering $v_a \leq v_{n_Q}$ for all $a$. Note that this ordering also matches the calculations we did to prove Theorem \ref{ybe1}, in which $+a < +0$ for all $a\neq 0$. Accordingly, this puts the negative $-0$ in the $1$-graded piece and the positive spins $+a$ in the $0$-graded piece of $V_z$; let $[v_\a]$ denote the grading of $v_\a$. For the sake of brevity, we will conflate the basis element with the decorated spin in the following considerations. 

\[\gamgam{\beta}{\alpha}{\delta}{\gamma}{}{}{}{}\]

We then can compare the weight $R_{z_i,z_j}$ of the $R$-vertex above, which acts on strands $i$ and $j$ of our ice model, with the $R$-matrix $R_{\a,\b}^{\gamma,\delta}$ considered in Definition 2.1 of Kojima's \cite{Kojima}, which is induced by the action of the universal $R$-matrix for $U$ on the tensor products $V(z_i)\otimes V(z_j)$ of standard evaluation modules $V(z)$:

\begin{align}\label{Ronmod}
R_{i,j}(z)(v_\g\otimes v_\d) = \sum_{\g,\d} (v_\a\otimes v_\b)R_{\a,\b}^{\g,\d}(z).
\end{align}

In Figure \ref{kojrmx}, we have the $R$-vertex weights in this paper on the left and the weights used by Kojima on the right. Note that the $q$ in the Kojima weights is not the same as the $q$ we have been using for the characteristic of the residue field; rather, Kojima's $q = \sqrt{v}$ for $v$ as defined in Section \ref{metice}.

\begin{figure} [h] 
\noindent\makebox[1\textwidth][c]{
\begin{tabular}{|c|c|c|c|}
\hline
\multicolumn{2}{|c|}{}&Figure 3& Kojima\\
\hline\xrowht[()]{30pt}
\multirow{3}{1em}{\vspace{-1.5cm}$\tt{a}_1$} &I& $\dfrac{-v + \boldsymbol{z}^{n_Q \alpha^\vee}}{1-v \boldsymbol{z}^{n_Q \alpha^\vee}}$&$\dfrac{-q^2+z}{1-q^2z}$\\
\cline{2-4} \xrowht[()]{30pt}
&II& $g_Q(a-b) \dfrac{1- \boldsymbol{z}^{n_Q \alpha^\vee}}{1-v \boldsymbol{z}^{n_Q \alpha^\vee}}$&$\dfrac{q(1-z)}{1-q^2z}$\\
\cline{2-4} \xrowht[()]{30pt}
&III& $\begin{aligned}[c]&\dfrac{(1-v)}{1-v \boldsymbol{z}^{n_Q \alpha^\vee}}\cdot\begin{cases}\boldsymbol{z}^{n_Q \alpha^\vee}&\text{if $a > b$}\\1&\text{if $a < b$}\end{cases}\end{aligned}$ & $\begin{aligned}[c]&\dfrac{(1-q^2)}{1-q^2z}\cdot\begin{cases}z&\text{if $a > b$}\\1&\text{if $a < b$}\end{cases}\end{aligned}$\\
\hline \xrowht[()]{20pt}
$\tt{a}_2$&IV& 1&-1\\
\hline \xrowht[()]{30pt}
$\tt{b}_1$&V& $\dfrac{v(1 - \boldsymbol{z}^{n_Q \alpha^\vee})}{1-v \boldsymbol{z}^{n_Q \alpha^\vee}}$&$\dfrac{q(1-z)}{1-q^2z}$\\
\hline \xrowht[()]{30pt}
$\tt{b}_2$& VI &$\dfrac{1 - \boldsymbol{z}^{n_Q \alpha^\vee}}{1-v \boldsymbol{z}^{n_Q \alpha^\vee}}$&$\dfrac{q(1-z)}{1-q^2z}$\\
\hline \xrowht[()]{30pt}
$\tt{c}_1$& VII &$\dfrac{(1-v)\boldsymbol{z}^{n_Q\a}}{1-v \boldsymbol{z}^{n_Q \alpha^\vee}}$&$\dfrac{(1-q^2)z}{1-q^2z}$\\
\hline \xrowht[()]{30pt}
$\tt{c}_2$& VIII & $\dfrac{(1-v)}{1-v \boldsymbol{z}^{n_Q \alpha^\vee}}$&$\dfrac{1-q^2}{1-q^2z}$\\
\hline
\end{tabular}}
\caption{We compare the $R$-vertex weights of this paper to the values of Kojima's $\widehat{\mathfrak{g}\mathfrak{l}}(1|n)$
$R$-matrix. Note: it is assumed that $a\not\equiv b \pmod{n}$. For consistency, the numberings in the second column correspond to the categorization of R-weights used in Section 4 of \cite{BBBIce}.}
\label{kojrmx}
\end{figure}

Note the similarities between our weights and Kojima's weights. Since Kojima's weights already satisfy graded versions of the desired Yang-Baxter equations of Theorems \ref{ybe2} and \ref{ybe3}, we will exploit this similarity to prove that the ungraded versions of these equations hold with our weights.

\begin{Lemma}[Kojima, Section 2 of \cite{Kojima}]\label{Kybes}
Let $R_{i,j}(z)$ be the weight induced by the action of the $R$-matrix $R(z)$ on $V_{z_i}\otimes V_{z_j}$. The $R$-matrix $R(z)$ satisfies the graded Yang-Baxter equations
\begin{align}\label{Kybe2}
R_{i,j}(z_i/z_j) R_{i,k}(z_i/z_k) R_{j,k}(z_j/z_k) = R_{j,k}(z_j/z_k) R_{i,k}(z_i/z_k) R_{i,j}(z_i/z_j),
\end{align}
where we view $R(z)$ as acting on $V(z_i) \otimes V(z_j) \otimes V(z_k)$ in the equation above, and 
\begin{align}\label{Kybe3}
R_{i,j}(z) R_{j,i}(1/z) = 1. 
\end{align}
\end{Lemma}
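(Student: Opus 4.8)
The statement is Kojima's \cite{Kojima}, and the plan is to deduce both equations from the quasitriangularity of $U := \Uv{1|n_Q}$ together with the structure of its evaluation modules. Recall that $U$ carries a universal $R$-matrix $\mathcal{R}\in U\widehat{\otimes}U$ satisfying the three axioms displayed above, read throughout in the graded (super) sense so that all Koszul signs are absorbed into the multiplication of the graded tensor powers $U^{\widehat{\otimes}k}$. From these axioms the universal graded Yang--Baxter equation
\[ \mathcal{R}_{12}\,\mathcal{R}_{13}\,\mathcal{R}_{23} = \mathcal{R}_{23}\,\mathcal{R}_{13}\,\mathcal{R}_{12} \quad\text{in } U^{\widehat{\otimes}3} \]
follows by the standard Drinfeld manipulation: apply $(\Delta\otimes\mathrm{id})$ to the relation $\Delta^{op}(x)=\mathcal{R}\Delta(x)\mathcal{R}^{-1}$, expand both sides with the coproduct axioms $(\Delta\otimes\mathrm{id})\mathcal{R}=\mathcal{R}_{13}\mathcal{R}_{23}$ and $(\mathrm{id}\otimes\Delta)\mathcal{R}=\mathcal{R}_{13}\mathcal{R}_{12}$, and cancel.

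Next I would push this identity forward along the evaluation representations $\pi_{z_i}\otimes\pi_{z_j}\otimes\pi_{z_k}$ on $V(z_i)\otimes V(z_j)\otimes V(z_k)$, where $\pi_z$ is the evaluation homomorphism of $U$ at $z\in\C^\times$. The crucial structural fact is that $\pi_z$ differs from $\pi_1$ by precomposition with the spectral-scaling automorphism of $U$, and $\mathcal{R}$ is invariant under this automorphism applied to both tensor legs; hence $(\pi_{z_i}\otimes\pi_{z_j})(\mathcal{R})$ depends only on the ratio $z_i/z_j$. After dividing by the scalar prefactor that normalizes the $\mathtt{a}_2$ weight --- i.e. the normalization implicit in the third column of Figure \ref{kojrmx} --- this operator is precisely Kojima's $R_{i,j}(z_i/z_j)$, and the universal graded YBE specializes verbatim to (\ref{Kybe2}); because the super signs were already built into $U^{\widehat{\otimes}3}$, no further signs intrude at the level of modules.

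For the unitarity relation (\ref{Kybe3}) I would use a rigidity argument. For generic $z$ the tensor product $V(z_i)\otimes V(z_j)$ of evaluation modules is irreducible, and $\tau R_{i,j}(z)$ is, up to scalar, the unique $U$-module isomorphism $V(z_i)\otimes V(z_j)\to V(z_j)\otimes V(z_i)$; the same applies to $\tau R_{j,i}(1/z)$ in the opposite direction. Their composite $R_{i,j}(z)R_{j,i}(1/z)$ is then a $U$-module endomorphism of an irreducible module, hence a scalar, which the chosen normalization pins to $1$, and the identity extends to all $z$ by rationality of the entries. Alternatively --- and this is the concretely available route, since Kojima writes $\mathcal{R}$ and hence $R(z)$ down explicitly --- one verifies (\ref{Kybe3}) directly from the block structure in Figure \ref{kojrmx}: it is diagonal on each vector $v_\a\otimes v_\a$, and on each two-dimensional block spanned by $v_\a\otimes v_\b$ and $v_\b\otimes v_\a$ (with $\a\ne\b$) it reduces to routine scalar identities such as $\tfrac{-q^2+z}{1-q^2z}\cdot\tfrac{-q^2+z^{-1}}{1-q^2z^{-1}}+\tfrac{(1-q^2)z}{1-q^2z}\cdot\tfrac{1-q^2}{1-q^2z^{-1}}=1$.

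The real work is organizational rather than conceptual: keeping the $\Z/2\Z$-grading and Koszul signs consistent so that the sign-free forms of the YBE and of (\ref{Kybe3}) are genuinely valid in the super category, and establishing the genericity/irreducibility of $V(z_i)\otimes V(z_j)$ needed for the rigidity step (or, on the explicit route, assembling Kojima's closed formula for $\mathcal{R}$ in the first place). Since all of these ingredients are carried out in \cite{Kojima}, in practice I would cite that paper for the construction and explicit evaluation of the universal $R$-matrix and contribute only the short specialization argument above.
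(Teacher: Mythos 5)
The paper provides no proof of this lemma: it is stated as a direct attribution to Section~2 of Kojima's paper, and the subsequent proof of Theorems~\ref{ybe2} and~\ref{ybe3} simply invokes it as a black box. Your own conclusion---that the correct move is to cite \cite{Kojima} for the construction and explicit form of the universal $R$-matrix and merely record the specialization---is therefore exactly what the paper does, so in that sense your approach matches the paper's.

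Your sketch of what underlies Kojima's statement is the standard argument and is sound in outline: the universal graded YBE from quasitriangularity, specialization along $\pi_{z_i}\otimes\pi_{z_j}\otimes\pi_{z_k}$ using the degree-shift automorphism to reduce to ratios $z_i/z_j$, and unitarity either by an irreducibility/Schur argument or by direct block computation from the explicit matrix entries. Two technical points deserve care if you actually wrote this out rather than citing it. First, in the affine setting the invariance of $\mathcal{R}$ under the simultaneous scaling automorphism, and hence the dependence on ratios alone, lives in a suitable completion where $\mathcal{R}$ converges on evaluation modules; the statement is true but not literally an identity in $U\widehat\otimes U$. Second, the Schur-lemma route to (\ref{Kybe3}) requires generic irreducibility of $V(z_i)\otimes V(z_j)$ for the superalgebra $\Uv{1|n_Q}$, which is known but should be cited rather than asserted; given that Kojima writes down $R(z)$ entry by entry, the direct $2\times 2$-block verification you mention at the end is the cleaner and more self-contained fallback, and it also fixes the scalar to $1$ without a separate normalization argument.
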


Substituting $\zn$ for $z$, we have identical weights in cases I, III,VII, and VIII. We can then modify the Kojima weights in cases II, V, and VI by a procedure called a \emph{Drinfeld twist}, originally due to Drinfeld \cite{Drin} and Reshetikhin \cite{Res}, which preserves the graded Yang-Baxter equations (\ref{Kybe2}) and (\ref{Kybe3}). A convenient reference for our case is Section 4 of \cite{BBBFhecke}, in which Brubaker, Buciumas, Bump, and Friedburg consider a particular Drinfeld twist of the quantum group $\Uv{n}$ in the case of the $n$-fold metaplectic cover. 

Let $F = \sum_i f_i\otimes f^i \in \Uv{1|n_Q} \otimes \Uv{1|n_Q}$ be an invertible element satisfying
\begin{align}\label{Drinconds}
\begin{split}
(\Delta \otimes \text{id}) F &= F_{13}F_{23}, \hspace{0.5cm} (\text{id}\otimes \Delta)F = F_{13}F_{12},\\
F_{12}F_{13}F_{23} &= F_{23}F_{13}F_{12}, \hspace{0.5cm} FF_{21} = 1
\end{split}
\end{align}
where $F_{21} = \tau F = \sum_i f^i \otimes f_i$. Let $u = \sum_i f^i S(f_i)$. We can then use $F$ to twist our original quantum group $\Uq{1|n_Q}$.
\begin{Thm}[Reshetikhin, Theorem 1 of \cite{Res}]
Let $U_{q}^F(\widehat{\frak{gl}}(1|n_Q)$ be the coalgebra that is $\Uq{1|n_Q}$ as a vector space, with the same unit, counit, and multiplication as $\Uq{1|n_Q}$, but comultiplication, antipode, and universal $R$-matrix given by
\begin{align*}
\Delta^F(a) = F\Delta(a)F^{-1}, \hspace{0.2cm} S^F(a) = uS(a)u^{-1},\hspace{0.2cm} R^F = F_{21}RF^{-1}.
\end{align*}
Then $\UqF{1|n_Q}$ is a quasitriangular Hopf superalgebra, called the \emph{Drinfeld twist} of $\Uq{1|n_Q}$.
\end{Thm}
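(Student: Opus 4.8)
The final statement is Reshetikhin's theorem on Drinfeld twists, and the plan is to carry out the standard verification, observing first that the $\Z/2\Z$-grading contributes nothing new here: $F$, $R$, and $u$ all lie in the even part of $\Uq{1|n_Q}\widehat{\otimes}\Uq{1|n_Q}$, so the only Koszul signs that ever appear are those already built into $\Uq{1|n_Q}$. I would begin with the coalgebra and Hopf axioms. Since multiplication, unit, and counit are unchanged and conjugation by the invertible element $F$ is an algebra automorphism of $\Uq{1|n_Q}\widehat{\otimes}\Uq{1|n_Q}$, the map $\Delta^F=F\Delta(\,\cdot\,)F^{-1}$ is again an algebra homomorphism; counitality follows by applying $\varepsilon$ to the first two relations in (\ref{Drinconds}), which force $(\varepsilon\otimes\text{id})F=(\text{id}\otimes\varepsilon)F=1$. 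For coassociativity one expands $(\Delta^F\otimes\text{id})\Delta^F$ and $(\text{id}\otimes\Delta^F)\Delta^F$, uses $(\Delta\otimes\text{id})F=F_{13}F_{23}$ and $(\text{id}\otimes\Delta)F=F_{13}F_{12}$ to pull the twist to the outside, and reduces the two sides to $F_{12}F_{13}F_{23}\,(\Delta\otimes\text{id})\Delta(\,\cdot\,)\,F_{23}^{-1}F_{13}^{-1}F_{12}^{-1}$ and $F_{23}F_{13}F_{12}\,(\text{id}\otimes\Delta)\Delta(\,\cdot\,)\,F_{12}^{-1}F_{13}^{-1}F_{23}^{-1}$; coassociativity of $\Delta$ together with the relation $F_{12}F_{13}F_{23}=F_{23}F_{13}F_{12}$ then finishes it. For the antipode I would first check that $u$ is invertible, with $u^{-1}$ expressible through $F^{-1}$ and $S$, and then verify the antipode identities $m(S^F\otimes\text{id})\Delta^F=\varepsilon(\,\cdot\,)1=m(\text{id}\otimes S^F)\Delta^F$ by a direct manipulation using the antipode axiom for $S$.

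Next I would check the three quasitriangularity axioms for $R^F=F_{21}RF^{-1}$. Quasi-cocommutativity is immediate: $\tau\Delta^F(a)=\tau(F\Delta(a)F^{-1})=F_{21}\Delta^{op}(a)F_{21}^{-1}=F_{21}R\Delta(a)R^{-1}F_{21}^{-1}$, while $R^F\Delta^F(a)(R^F)^{-1}=(F_{21}RF^{-1})(F\Delta(a)F^{-1})(FR^{-1}F_{21}^{-1})$ collapses to the same expression after the inner cancellations. The hexagon axioms $(\Delta^F\otimes\text{id})R^F=R^F_{13}R^F_{23}$ and $(\text{id}\otimes\Delta^F)R^F=R^F_{13}R^F_{12}$ are the substantive part. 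For the first, apply $(\Delta^F\otimes\text{id})=F_{12}\,(\Delta\otimes\text{id})(\,\cdot\,)\,F_{12}^{-1}$ to $F_{21}RF^{-1}$, substitute $(\Delta\otimes\text{id})F=F_{13}F_{23}$ and $(\Delta\otimes\text{id})R=R_{13}R_{23}$, and compare against $R^F_{13}R^F_{23}=(F_{31}R_{13}F_{13}^{-1})(F_{32}R_{23}F_{23}^{-1})$; the equality reduces to commuting the various $F_{ij}$ past the $R_{k\ell}$ using the hexagon relations for $R$, the relations (\ref{Drinconds}) for $F$, and the fact that $F_{ij}$ and $R_{k\ell}$ commute when $\{i,j\}$ and $\{k,\ell\}$ are disjoint. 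The second hexagon axiom is handled symmetrically with $(\text{id}\otimes\Delta)F=F_{13}F_{12}$ and $(\text{id}\otimes\Delta)R=R_{13}R_{12}$. One also records that the unitarity $FF_{21}=1$ gives $R^F(R^F)_{21}=F_{21}(RR_{21})F_{21}^{-1}$, so $R^F$ inherits whatever unitarity $R$ possesses; this is what will later make $R^F$ compatible with the Yang--Baxter relation (\ref{Kybe3}) of Lemma \ref{Kybes}.

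The step I expect to be the main obstacle is precisely the index bookkeeping in the hexagon axioms: one multiplies half a dozen elements of the completed triple tensor product $\Uq{1|n_Q}\widehat{\otimes}\Uq{1|n_Q}\widehat{\otimes}\Uq{1|n_Q}$, must track which condition in (\ref{Drinconds}) is the $F$-analogue of which $R$-matrix axiom, and must confirm that each rearrangement is legitimate on the completion. Because $F$ and $R$ are even, no sign complications arise beyond those already present in $\Uq{1|n_Q}$, so once these identities are in hand the twisted data form a quasitriangular Hopf superalgebra. In practice, since this is Reshetikhin's theorem, I would cite the computation from Drinfeld \cite{Drin} and Reshetikhin \cite{Res} (and the metaplectic-case exposition in Section 4 of \cite{BBBFhecke}) rather than reproduce it in full.
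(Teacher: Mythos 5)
The paper does not actually prove this result: it is stated verbatim as Reshetikhin's Theorem 1 and used as a cited fact, which is precisely where your proposal lands when you say you would defer to \cite{Drin}, \cite{Res}, and Section 4 of \cite{BBBFhecke}. Your outline of the standard verification is correct in its broad strokes, and the observation that $F$, $R$, and $u$ are all even (so the $\Z/2\Z$-grading contributes no new Koszul signs) is exactly the point that lets the ungraded Drinfeld--Reshetikhin argument carry over. One small inaccuracy worth flagging: the remark that ``$F_{ij}$ and $R_{k\ell}$ commute when $\{i,j\}$ and $\{k,\ell\}$ are disjoint'' is vacuous inside $A^{\otimes 3}$, since any two two-element subsets of $\{1,2,3\}$ intersect; the actual bookkeeping in the hexagon axioms instead uses identities such as $(\Delta\otimes\mathrm{id})F_{21}=F_{32}F_{31}$, obtained by applying a cyclic permutation of tensor slots to $(\mathrm{id}\otimes\Delta)F=F_{13}F_{12}$, together with the quasitriangularity of $R$ and the relations (\ref{Drinconds}). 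Since you ultimately cite rather than reproduce that computation, this does not affect the correctness of your argument, and your treatment agrees with the paper's.
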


Following the notation of \cite{Res}, choose $F$ to be 
\begin{align}\label{F} F &= \sum_{i} e_{i,i}\otimes e_{i,i} + \sum_{+i}q^{-1/2}\cdot e_{-0,+i}\otimes e_{+i,-0} + \sum_{+i} q^{1/2}\cdot e_{+i,-0} \otimes e_{-0,+i}\\
&+\sum_{+i < +j}\left(q\cdot g_Q(i-j)\right)^{1/2} \cdot e_{+i,+j}\otimes e_{+j,+i}+\sum_{+i < +j}\left(q\cdot g_Q(i-j)\right)^{-1/2}\cdot e_{+j,+i}\otimes e_{+i,+j}
\end{align}
Straightforward calculations verify that $F$ satisfies (\ref{Drinconds}), so we may use this $F$ to take a Drinfeld twist. We may similarly express the Kojima $R$-matrix in such a form, 
\begin{align*}
R &= -e_{-0,-0}\otimes e_{-0,-0} + \sum_{+i} \frac{-q^2 + z}{1-q^2z} \cdot e_{+i,+i}\otimes e_{+i,+i} + \sum_{i\neq j} \frac{q(1-z)}{1-q^2z} \cdot e_{i,i} \otimes e_{j,j}\\
& +\sum_{i<j} \frac{1-q^2}{1-q^2z}\cdot e_{i,j}\otimes e_{j,i} + \sum_{i<j} \frac{(1-q^2)z}{1-q^2z} \cdot e_{j,i} \otimes e_{i,j}.
\end{align*} 
Applying the Drinfeld twist using $F$ in Equation (\ref{F}) yields the universal $R$-matrix of $U^F$:
\begin{align*}
R^F &= -e_{-0,-0}\otimes e_{-0,-0} + \sum_{+i} \frac{-q^2 + z}{1-q^2z} \cdot e_{+i,+i}\otimes e_{+i,+i}\\
& +\sum_{+i\neq +j} g_Q(i-j)\frac{1-z}{1-q^2z} \cdot e_{+i,+i} \otimes e_{+j,+j}\\
& +\sum_{+i} \frac{q^2(1-z)}{1-q^2z} \cdot e_{-0,-0} \otimes e_{+i,+i}+\sum_{+i} \frac{1-z}{1-q^2z} \cdot e_{+i,+i} \otimes e_{-0,-0}\\
& +\sum_{i<j} \frac{1-q^2}{1-q^2z}\cdot e_{i,j}\otimes e_{j,i} + \sum_{i<j} \frac{(1-q^2)z}{1-q^2z} \cdot e_{j,i} \otimes e_{i,j}.
\end{align*} 
Note that only the $e_{i,i}\otimes e_{j,j}$ terms have been affected, which are precisely cases II, V, VI in Figure \ref{kojrmx}. For case II, we note also that this calculation uses the fact that $g_Q(i-j)g_Q(j-i) = v$ to combine cases when $i <j$ and $i>j$. Futhermore, when we plug in $q^2 = v$ and $z = \zn$, the new coefficients of the terms for cases II, V, and VI precisely match the $R$-matrix weights in Figure \ref{modR}.

Now, we have matched all cases except IV. However, we note that the Yang-Baxter equations in Lemma \ref{Kybes} are both graded Yang-Baxter equations. Kojima remarks that multiplying his weights $R^{j_1,j_2}_{k_1,k_2}(z)$ by the signature $(-1)^{[v_{k_1}][v_{k_2}]}$ gives the Perk-Schultz $R$-matrix $R^{PS}(z)$, which satisfies the ungraded versions of these Yang-Baxter equations. This amounts to changing the sign on all cases with only odd-graded spins (i.e. I, II, and III), but it works just as well to change the sign on all cases with only even-graded spins, which only affects VI.

\begin{proof}[Proof of Theorems \ref{ybe2} and \ref{ybe3}]
Starting with Kojima's $R$-matrix weights $R_{\g,\d}^{\a,\b}$, we apply the two methods of changing our $R$-matrices discussed above to obtain the $R$-vertex weights in Figure \ref{modR}. Then Theorems \ref{ybe2} and \ref{ybe3} follow from Equations (\ref{Kybe2}) and (\ref{Kybe3}) of Lemma \ref{Kybes}, respectively.
\end{proof}

Note that the methods we used in this section apply to reductive groups in general, as opposed to just $GL_r(F)$, so one future direction for our research would be to develop analogous $R$-vertex weights satisfying similar Yang-Baxter equations for metaplectic covers of other reductive groups.

\section{Proof of Theorem \ref{maintheorem}}\label{pfmain}

We now have assembled all the necessary pieces to prove the first of our main results. Let $\twid{G}$ be the metaplectic cover associated to a bilinear form $B$ and quadratic form $Q$. Set $v = q^{-1}$, $g_Q$ to the $n_Q$-th Gauss sum given in Equation (\ref{gausssum}), and $n_Q = n/\text{gcd}(n, Q(\a^\vee))$ for any simple coroot $\a^\vee$.

\begin{Prop}\label{tauR}
The $R$-matrix weights in Figure \ref{modR} on vertices with all positive spins are precise normalizations of the entries in the scattering matrix. That is to say, let $\boldsymbol{c} = (c_1,...,c_r)$ be an integer partition with $c_i \in (0,n]$ for all $i$. Let $\nu = \rho - \boldsymbol{c}$, and let $\tau_{\nu,\mu}(\boldsymbol{z}) := \tau_{\nu,\mu}^{s_i}(\boldsymbol{z})$ as in Proposition \ref{TAU}, where the reflection $s_i$ corresponds to the simple coroot $\a_i^\vee$. Let \emph{wt} return the Boltzmann weight of a vertex according to Figures \ref{modwts} and \ref{modR}. Then, given any pair of integers $a,b$ with $a \equiv c_i$ and $b\equiv c_{i+1}\pmod{n_Q}$, if $a\not\equiv b\pmod{n_Q}$, then
\[ \tau_{\nu,\nu}^1(\boldsymbol{z}) = \boldsymbol{z}^{(c_i - c_{i+1} -1)\a_i^\vee}\emph{wt}\left( \begin{array}{c}\gamgam{+}{+}{+}{+}{a}{b\vphantom{b}}{b\vphantom{b}}{a}\end{array}\right) \text{ \hspace{0.5cm} and \hspace{0.5cm}} \tau_{\nu, s_i\nu + \a_i^\vee}^2(\boldsymbol{z}) = \boldsymbol{z}^{-\a_i^\vee}\emph{wt}\left( \begin{array}{c}\gamgam{+}{+}{+}{+}{a}{b\vphantom{b}}{a\vphantom{b}}{b}\end{array}\right). \]
If $a\equiv b\pmod{n_Q}$, then 
\[\tau_{\nu,\nu}^1(\boldsymbol{z}) + \tau_{\nu,s_i\nu + \a_i}^2(\boldsymbol{z}) = \boldsymbol{z}^{-\a_i^\vee}\emph{wt}\left( \begin{array}{c}\gamgam{+}{+}{+}{+}{a}{a\vphantom{b}}{a\vphantom{b}}{a}\end{array}\right).\]
\end{Prop}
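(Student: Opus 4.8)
The plan is to verify the three identities by direct computation: one substitutes the explicit bilinear form $B_{b,c}$ of~(\ref{Bmatrix}) and the coweight $\nu = \rho - \boldsymbol{c}$ into the formulas of Proposition~\ref{TAU}, simplifies the resulting rational functions, and compares them term by term with the $R$-vertex weights of Figure~\ref{modR}. Everything rests on a single pairing evaluation. Since the $i$-th simple coroot is $\alpha_i^\vee = \varepsilon_i^\vee - \varepsilon_{i+1}^\vee$, since $l_{\alpha_i^\vee} = Q(\alpha_i^\vee)$, and since the standard $GL_r$ pairing is $\langle\alpha_i,\nu\rangle = \nu_i - \nu_{i+1}$, reading $B_{b,c}$ off~(\ref{Bmatrix}) gives $B(\alpha_i^\vee,\nu) = Q(\alpha_i^\vee)(\nu_i - \nu_{i+1})$ for any coweight $\nu$. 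With $\nu = \rho - \boldsymbol{c}$ and $\rho_i - \rho_{i+1} = 1$ this yields $B(\alpha_i^\vee,\nu)/Q(\alpha_i^\vee) = 1 - c_i + c_{i+1}$, which I abbreviate $m$. Using $W$-invariance of $B$, $s_i\alpha_i^\vee = -\alpha_i^\vee$, and $B(\alpha_i^\vee,\alpha_i^\vee) = 2Q(\alpha_i^\vee)$, one also gets $B(\alpha_i^\vee,\, s_i\nu + \alpha_i^\vee) = -B(\alpha_i^\vee,\nu) + 2Q(\alpha_i^\vee)$, which is what is needed to evaluate the second index in $\tau^2$.

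First I would treat $a \not\equiv b \pmod{n_Q}$. Plugging the pairing values into Proposition~\ref{TAU} and setting $v = q^{-1}$, the diagonal coefficient becomes $\tau^1_{\nu,\nu}(\boldsymbol{z}) = (1-v)\,\boldsymbol{z}^{(n_Q\lceil m/n_Q\rceil - m)\alpha_i^\vee}\big/\big(1 - v\boldsymbol{z}^{n_Q\alpha_i^\vee}\big)$, and the off-diagonal coefficient becomes $\tau^2_{\nu,\, s_i\nu + \alpha_i^\vee}(\boldsymbol{z}) = g\big(B(\alpha_i^\vee,\nu) - Q(\alpha_i^\vee)\big)\,\boldsymbol{z}^{-\alpha_i^\vee}\big(1 - \boldsymbol{z}^{n_Q\alpha_i^\vee}\big)\big/\big(1 - v\boldsymbol{z}^{n_Q\alpha_i^\vee}\big)$. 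The argument of the Gauss sum simplifies to $Q(\alpha_i^\vee)(c_{i+1} - c_i)$, so the defining relation $g_Q(t) = g(Q(\alpha_i^\vee)t)$ together with the periodicity of $g_Q$ modulo $n_Q$ (and $a \equiv c_i$, $b \equiv c_{i+1}$) identifies this coefficient with $g_Q(b-a)$, precisely the prefactor carried by the relevant all-positive type-$\tt{a}_1$ vertex in Figure~\ref{modR}; multiplying through by $\boldsymbol{z}^{-\alpha_i^\vee}$ then gives the second claimed identity verbatim. For the first identity one multiplies the corresponding type-$\tt{a}_1$ weight $(1-v)\big/\big(1 - v\boldsymbol{z}^{n_Q\alpha_i^\vee}\big)$ times $\boldsymbol{z}^{n_Q\alpha_i^\vee}$ or $1$ (the branch recorded in Figure~\ref{modR}) by $\boldsymbol{z}^{(c_i - c_{i+1} - 1)\alpha_i^\vee} = \boldsymbol{z}^{-m\alpha_i^\vee}$, so the identity reduces to the statement that $n_Q\lceil m/n_Q\rceil$ equals $n_Q$ or $0$ according to the branch — i.e., that rounding $m$ up to the nearest multiple of $n_Q$ matches the inequality between the charges $a$ and $b$ once these are taken as the canonical representatives modulo $n_Q$ carried by the decorated spins.

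When $a \equiv b \pmod{n_Q}$ the computation closes up algebraically. Here the Gauss sum is evaluated at $0$, and $g_Q(0) = -v$ gives $\tau^2_{\nu,\, s_i\nu + \alpha_i^\vee}(\boldsymbol{z}) = -v\,\boldsymbol{z}^{-\alpha_i^\vee}\big(1 - \boldsymbol{z}^{n_Q\alpha_i^\vee}\big)\big/\big(1 - v\boldsymbol{z}^{n_Q\alpha_i^\vee}\big)$, while $m \equiv 1 \pmod{n_Q}$ forces $n_Q\lceil m/n_Q\rceil - m = n_Q - 1$, hence $\tau^1_{\nu,\nu}(\boldsymbol{z}) = (1-v)\,\boldsymbol{z}^{(n_Q-1)\alpha_i^\vee}\big/\big(1 - v\boldsymbol{z}^{n_Q\alpha_i^\vee}\big)$. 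Adding these and clearing the common denominator, the numerator collapses to $(1-v)\boldsymbol{z}^{(n_Q-1)\alpha_i^\vee} - v\boldsymbol{z}^{-\alpha_i^\vee}\big(1 - \boldsymbol{z}^{n_Q\alpha_i^\vee}\big) = \boldsymbol{z}^{(n_Q-1)\alpha_i^\vee} - v\boldsymbol{z}^{-\alpha_i^\vee} = \boldsymbol{z}^{-\alpha_i^\vee}\big(\boldsymbol{z}^{n_Q\alpha_i^\vee} - v\big)$, so that $\tau^1_{\nu,\nu} + \tau^2_{\nu,\, s_i\nu + \alpha_i^\vee} = \boldsymbol{z}^{-\alpha_i^\vee}\,\big(-v + \boldsymbol{z}^{n_Q\alpha_i^\vee}\big)\big/\big(1 - v\boldsymbol{z}^{n_Q\alpha_i^\vee}\big)$, which is exactly $\boldsymbol{z}^{-\alpha_i^\vee}$ times the first ($\tt{a}_1$) entry of Figure~\ref{modR}.

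The genuinely delicate step is the bookkeeping flagged at the end of the second paragraph. Proposition~\ref{TAU} expresses $\tau^1$ and $\tau^2$ in terms of $\nu$, $B$, $Q$, and ceiling functions, whereas Figure~\ref{modR} expresses the $R$-weights in terms of the charge labels on the four legs of the $R$-vertex; matching them requires care about (i) the orientation convention identifying each all-positive $R$-vertex picture with one of the rows of the $\tt{a}_1$/$\tt{a}_2$ block, (ii) the fact that the branch in the $\tt{a}_1$ weight is dictated by the residues of $c_i$ and $c_{i+1}$ modulo $n_Q$ rather than by $c_i, c_{i+1}$ themselves, and (iii) the observation that the normalizing monomials $\boldsymbol{z}^{(c_i - c_{i+1} - 1)\alpha_i^\vee}$ and $\boldsymbol{z}^{-\alpha_i^\vee}$ are exactly what strips the non-$n_Q$-th-power part off the scattering coefficients, leaving an honest function of $\boldsymbol{z}^{n_Q\alpha_i^\vee}$ to compare against the $R$-matrix. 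Once these conventions are nailed down, each of the three identities is a one-line check.
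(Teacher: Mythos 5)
Your proof is correct and follows essentially the same route as the paper: plug $\nu = \rho - \boldsymbol{c}$ into the formulas of Proposition~\ref{TAU}, evaluate the pairings (your identity $B(\alpha_i^\vee,\nu) = Q(\alpha_i^\vee)(\nu_i-\nu_{i+1})$ is a nice explicit statement of what the paper uses implicitly), simplify, and read off the $R$-vertex weights from Figure~\ref{modR}. The one place you go slightly beyond the paper is treating the degenerate case under the weaker hypothesis $m\equiv 1\pmod{n_Q}$ rather than $c_i=c_{i+1}$, and your flagged bookkeeping issue about matching the ceiling branch to the $a\lessgtr b$ branch in Figure~\ref{modR} is exactly the step the paper elides with "checking against the table."
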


In the case where $n_Q = n$, Proposition \ref{tauR} recovers Proposition 5.3 of \cite{BBBIce}.

\begin{proof}
For notational clarity, we set $s:=s_i$ and $\a^\vee:=\a_i^\vee$. Then we plug $\nu$ into the equations from Proposition \ref{tauR}, recalling that $[n_Q] = 0$, not $n_Q$. Here $\nu_i = r -i - c_i$, so
\begin{align*}
\tau_{\nu,\nu}^1(\boldsymbol{z}) = (1-v)\frac{\boldsymbol{z}^{[\nu_{i+1} - \nu_i]\a^\vee}}{1-v\boldsymbol{z}^{n_Q\a^\vee}} &= (1-v) \frac{\boldsymbol{z}^{[c_i - c_{i+1} - 1]\a^\vee}}{1-v\zn}\\
&= \boldsymbol{z}^{(c_i-c_{i+1}-1)\a^\vee}\frac{1-v}{1-v\zn}\cdot \begin{cases} \zn & \text{ if } c_i < c_{i+1}\\ 1 & \text{ if }c_i > c_{i+1}.\end{cases}
\end{align*}
Similarly,
\begin{align*}
\tau_{\nu, s\nu + \a^\vee}^2(z) &= g(-B(\a^\vee, s\nu+\a^\vee) + Q(\a^\vee)) \cdot \boldsymbol{z}^{-\a^\vee}\frac{1-\boldsymbol{z}^{n_Q\a^\vee}}{1-v\boldsymbol{z}^{n_Q\a^\vee}}\\ &= \boldsymbol{z}^{-\a^\vee}g_Q(c_{i+1}-c_i)\frac{1-\zn}{1-v\zn}
\end{align*}
Checking against the table in Figure \ref{modR}, we obtain the desired relations in the first case. For the case of $a\equiv b\pmod{n_Q}$, that is $c_i = c_{i+1}$, we have $g_Q(c_{i+1}-c_i) = g_Q(0) = -v$, so
\begin{align*}
\tau_{\nu,\nu}^1(\boldsymbol{z}) + \tau^2_{\nu,s\nu + \a^\vee}(\boldsymbol{z}) &= \boldsymbol{z}^{-\a^\vee}\cdot \frac{(1-v)\zn}{1-v\zn} - v\boldsymbol{z}^{-\a^\vee}\cdot\frac{1-\zn}{1-q^{-1}\zn}\\ &= \boldsymbol{z}^{-\a^\vee}\cdot\frac{\zn - v}{1-v\zn}
\end{align*}
which gives us the desired equality.
\end{proof}

Let $\theta_{\boldsymbol{z}}: \frak{W}^{\boldsymbol{z}} \rightarrow \bigotimes_i V_+(z_i)$ be the map
\[\theta_{\boldsymbol{z}}(\mathcal{W}^{\boldsymbol{z}}_\nu) = \boldsymbol{z}^{-\nu}\cdot  v_{\boldsymbol{c}}\]
for $\nu = -\boldsymbol{c} + \rho$, where $\boldsymbol{c} = (c_1,...,c_r)$ and $v_{\boldsymbol{c}} = v_{c_1} \otimes \cdots \otimes v_{c_r}$.

\begin{proof}[Proof of Theorem \ref{maintheorem}]
Given a set of representatives $\Gamma$ for $\mathfrak{W}^{\boldsymbol{z}}$, set the representatives for $\mathfrak{W}^{s_i \boldsymbol{z}}$ to be $\Gamma^{s_i} := s_i \Gamma + \a_i^\vee$. Consider $\nu \in \Gamma$: if $s_i\nu + \a \not\sim \nu$, both $\nu$ and $s_i\nu + \a$ appear in $\Gamma^{s_i}$, whereas if $s_i\nu + \a \sim \nu$, only $s_i\nu + \a$ appears in $\Gamma^{s_i}$. Tracing $\mathcal{W}^{\boldsymbol{z}}_\nu$ through the diagram of Theorem \ref{maintheorem}, 
\begin{center}
\begin{tikzcd}
\frak{W}^{\boldsymbol{z}} \arrow[r,"\theta_{\boldsymbol{z}}"] \arrow[d,"\overline{\mathcal{A}}_{s_i}" left]& V_+(z_1) \otimes \cdots \otimes V_+(z_i) \otimes V_+(z_{i+1}) \otimes \cdots \otimes V_+(z_r)\arrow[d, "\left(\tau R_{z_i,z_{i+1}}\right)_{i,i+1}"]\\
\frak{W}^{s_i\boldsymbol{z}} \arrow[r,"\theta_{s_i\boldsymbol{z}}"] &V_+(z_1) \otimes \cdots \otimes V_+(z_{i+1}) \otimes V_+(z_i) \otimes \cdots \otimes V_+(z_r)
\end{tikzcd}
\end{center}
we first consider $\theta_{s_i\boldsymbol{z}}\overline{\mathcal{A}}_{s_i}(\mathcal{W}^{\boldsymbol{z}}_\nu)$. There are precisely two cosets on which $\tau_{\nu,\mu}$ is nonzero, namely $\mu \sim \nu$ and $\mu \sim s_i\nu + \a_i^\vee$. Matching the representatives for $\Gamma^{s_i}$, when these are different cosets, we will use $\nu$ for the first and set $\mu := s_i \nu + \a_i^\vee$ for the second; when the same, we will use only $\mu$. Note that for $\nu \in \Gamma$ written as $\nu -\rho = \boldsymbol{c}$, we have $\mu -\rho = s_i(\nu-\rho) = s_i \boldsymbol{c}$. We split into cases here: suppose $\nu \not\sim s_i\nu + \a^\vee_i$. Then
\begin{align}\label{side1} \theta_{s_i\boldsymbol{z}}\overline{\mathcal{A}}_{s_i}(\mathcal{W}^{\boldsymbol{z}}_\nu) = \theta_{s_i\boldsymbol{z}}\left(\tau_{\nu,\nu}^1\mathcal{W}^{\boldsymbol{z}}_\nu + \tau_{\nu,\mu}^2\mathcal{W}^{\boldsymbol{z}}_\mu\right) = (s_i\boldsymbol{z})^{-\nu}\tau_{\nu,\nu}^1v_{\nu-\rho} + (s_i\boldsymbol{z})^{-\mu}\tau_{\nu,\mu}^2v_{\mu-\rho}.\end{align}
If we follow the other direction around the diagram, we have $\theta_{\boldsymbol{z}}(\mathcal{W}^{\boldsymbol{z}}_\nu) = \boldsymbol{z}^{-\nu}v_{\nu-\rho}$ and $\left(\tau R_{z_i,z_{i+1}}\right)_{i,i+1}$ acts on the $v_{c_i}\otimes v_{c_{i+1}}$ component of $v_{\nu-\rho}$ as
\[ \tau R(v_{c_i} \otimes v_{c_{i+1}}) = \sum_{c_k,c_l} R_{c_k,c_l}^{c_i,c_{i+1}} (v_{c_k} \otimes v_{c_l}).\]
The only nonzero terms on the right hand side are those corresponding to $v_{c_i}\otimes v_{c_{i+1}}$ and $v_{c_{i+1}}\otimes v_{c_i}$. Thus, we have that
\begin{align}\label{side2a}\left(\tau R_{z_i,z_{i+1}}\right)_{i,i+1}\theta_{\boldsymbol{z}}(\mathcal{W}^{\boldsymbol{z}}_\nu) = \boldsymbol{z}^{-\nu}\left(R_{c_i,c_{i+1}}^{c_i,c_{i+1}}\right)_{i,i+1} v_{\nu - \rho} +  \boldsymbol{z}^{-\nu}\left(R_{c_{i+1},c_i}^{c_i,c_{i+1}}\right)_{i,i+1} v_{\mu - \rho}.\end{align}
Since $(s_i\boldsymbol{z})^\nu \cdot \boldsymbol{z}^{-\nu} = \boldsymbol{z}^{(c_i-c_{i+1}-1)\a^\vee}$ and $(s_i\boldsymbol{z})^\mu \cdot \boldsymbol{z}^{-\nu} = \boldsymbol{z}^{-\a^\vee}$, when we compare like terms on the right hand sides of (\ref{side1}) and (\ref{side2a}), the first half of Proposition \ref{tauR} gives us equality of coefficients.

If instead $\nu \sim s_i\nu + \a_i^\vee$, then
\begin{align*}\theta_{s_i\boldsymbol{z}}\overline{\mathcal{A}}_{s_i}(\mathcal{W}^{\boldsymbol{z}}_\nu) = \theta_{s_i\boldsymbol{z}}\left((\tau_{\nu,\nu}^1+ \tau_{\nu,\mu}^2)\mathcal{W}^{\boldsymbol{z}}_\mu\right) = (s_i\boldsymbol{z})^{-\mu}(\tau_{\nu,\nu}^1 + \tau_{\nu,\mu}^2) \ v_{\mu-\rho}.\end{align*}

Going around the other side, note that $c_i = c_{i+1}$, so $\mu-\rho = \nu - \rho$. The $R$-matrix action leaves only one term, corresponding to $v_{c_i} \otimes v_{c_i}$,
\begin{align*} \left(\tau R_{z_i,z_{i+1}}\right)_{i,i+1}\theta_{\boldsymbol{z}}(\mathcal{W}^{\boldsymbol{z}}_\nu) = \boldsymbol{z}^{-\nu}\left(R_{c_i,c_i}^{c_i,c_i}\right)_{i,i+1} v_{\nu - \rho}.\end{align*}

We already checked that $(s_i\boldsymbol{z})^\mu \cdot \boldsymbol{z}^{-\nu} = \boldsymbol{z}^{-\a^\vee}$, so the second half of Proposition \ref{tauR} provides equality of coefficients as desired. 
\end{proof}

\begin{Remark}
As constructed, $\theta_{\boldsymbol{z}}$ is a vector space homomorphism. Given a bilinear form such that each coset $\Gamma$ contains only one equivalence class modulo $n_Q$, however, we obtain an injection. In this case, we may adjust slightly to $\theta_{\boldsymbol{z}}(\mathcal{W}_\nu^{\boldsymbol{z}}) = \boldsymbol{z}^{[-\nu ]} \cdot v_{\boldsymbol{c}}$ for $\boldsymbol{c} = \rho - \nu \pmod{n_Q}$ to make the entire diagram independent of choice of coset representatives.
\end{Remark}

\section{Metaplectic Ice and Whittaker Functions}\label{icewhit}

One of our goals in defining this structure of metaplectic ice is to relate it back to metaplectic covers via the following result.

\begin{Thm}\label{BIGTHM} Let $\lambda$ be a partition with $r$ parts and $n_Q$ a fixed positive integer. Then the partition function $Z(\frak{S}_\lambda)$ is (up to normalization) a value of a $p$-adic spherical Whittaker function on a metaplectic cover of $GL(r,F)$.
\end{Thm}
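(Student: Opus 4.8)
The plan is to identify the partition function $Z(\frak{S}_\lambda)$ with the integral $I_\lambda$ computed in Theorem~\ref{crystalsum}, up to an explicit normalizing monomial, and then read off a Whittaker value from equation~(\ref{mainwhitint}). Summing~(\ref{mainwhitint}) over $\g\in\Gamma$ and invoking Theorem~\ref{crystalsum} gives
\begin{align*}
\sum_{\g\in\Gamma}\mathcal{W}^{\boldsymbol{z}}_\g(\varpi^\lambda)&=(\d^{1/2}\chi_{\boldsymbol{z}})(\varpi^{w_0\lambda})\,I_\lambda\\
&=(\d^{1/2}\chi_{\boldsymbol{z}})(\varpi^{w_0\lambda})\sum_{(\boldsymbol{i},\boldsymbol{m})\in B(\lambda+\rho)}\ \prod_{\a\in\Phi^+}w(\boldsymbol{m},\a)\,x_\a^{m_\a},
\end{align*}
so it is enough to show that $Z(\frak{S}_\lambda)$ equals this last sum times a monomial in $z_1,\dots,z_r$. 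In particular the quantum group plays no role here: the statement is purely a matching of two combinatorial expansions.

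First I would set up a weight-preserving bijection $\Psi$ between the $n_Q$-admissible states of $\frak{S}_\lambda$ and the crystal data $(\boldsymbol{i}_0,\boldsymbol{m})\in B(\lambda+\rho)$ for the reduced word $\boldsymbol{i}_0=(r,r-1,r,\dots,1,2,\dots,r)$ fixed in Section~\ref{whitcalcs}. Reading a state downward from the top boundary, where the $-$ spins occupy the columns indexed by $\lambda+\rho$, the $-$ spins on the horizontal lattice lines between successive rows record an interlacing array of Gelfand--Tsetlin type; the admissible vertex patterns of Figure~\ref{modwts} force the interlacing to be strict, and the consecutive differences of the rows produce the entries $m_{i,j}$. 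A direct check then shows that the inequalities~(\ref{mcond}) cutting out $B(\lambda+\rho)$ are exactly the conditions for such an array to come from a state, and that the boxing/circling decoration of $(\boldsymbol{m},\a)$ corresponds to boundary vertex types together with the $n_Q$-admissibility constraint on charges. This is the bijection of Brubaker, Buciumas, and Bump \cite{BBBIce} in the case $n_Q=n$; the only modification is to replace their $n$ by the parameter $n_Q$, which is harmless since both the crystal description of Section~\ref{whitcalcs} and the weights of Figure~\ref{modwts} were already phrased for general $n_Q$.

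Next I would verify that $\Psi$ carries the Boltzmann weight $B^{(n_Q)}(\frak{s})$ to $\prod_{\a\in\Phi^+}w(\boldsymbol{m},\a)\,x_\a^{m_\a}$ up to the overall factor $(\d^{1/2}\chi_{\boldsymbol{z}})(\varpi^{w_0\lambda})$. This is a row-by-row, vertex-by-vertex comparison against the weight function~(\ref{weighting}): the $\mathtt{b}_1$ vertices supply the $g_Q$-factors, the $\mathtt{c}_1$ vertices supply the factors $1-v$, the powers of $z_i$ in the $\mathtt{a}_1$ and $\mathtt{c}_1$ weights accumulate into the monomial $\prod_\a x_\a^{m_\a}$ together with the normalization, and configurations of Boltzmann weight $0$ on the ice side (the vertex patterns excluded from Figure~\ref{modwts}, and $-$ horizontal edges of nonzero charge modulo $n_Q$) match the crystal entries on which $w(\boldsymbol{m},\a)$ vanishes. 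Summing over all states then yields $Z(\frak{S}_\lambda)=(\text{monomial})\cdot\sum_{\g}\mathcal{W}^{\boldsymbol{z}}_\g(\varpi^\lambda)$, i.e. a normalization of the value at $\varpi^\lambda$ of the spherical Whittaker function $\mathcal{W}^{\boldsymbol{z}}=\sum_{\g}\mathcal{W}^{\boldsymbol{z}}_\g$ on $\twid{G}$; imposing a left-boundary charge vector $c$ on the system isolates the single coefficient $\mathcal{W}^{\boldsymbol{z}}_\g$, with $\g$ determined by $c$ and $\lambda$.

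The hard part will be the bookkeeping in the second step for general $n_Q$: reconciling the global charge statistic and the boxing/circling rules with the purely local vertex types, and exhibiting the substitution $x_\a\mapsto x_{i,j}(\boldsymbol{z})$ uniformly enough that the accumulated powers of $z_i$ reproduce $\prod_\a x_\a^{m_\a}$ exactly while each $g_Q$-factor and each $(1-v)$-factor lands in the correct case. Once that dictionary is in place, the rest is a transcription of the arguments of McNamara \cite{McNcrystal} and of \cite{BBBIce}, with $n$ replaced by $n_Q$ and with the bilinear form $B_{b,c}$ of Section~\ref{metchar} in place of the dot product.
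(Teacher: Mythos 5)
Your proposal is correct and follows essentially the same route as the paper: combine Theorem~\ref{crystalsum} with equation~(\ref{mainwhitint}), then give a weight-preserving bijection between crystal nodes and ice states, with the Gelfand--Tsetlin array appearing as the intermediary. The paper makes the intermediary explicit, factoring the bijection into two previously established ones (crystal $\leftrightarrow$ Gelfand--Tsetlin from~\cite{BBF}, Gelfand--Tsetlin $\leftrightarrow$ ice from~\cite{MetIce}) and checking $G(v)=G(\frak{T})$ directly, whereas you describe the composite map; either formulation works. One small caution: the bijection should be set up between \emph{all} admissible ice states and \emph{all} crystal nodes with $\boldsymbol{i}=\boldsymbol{i}_0$, not just the $n_Q$-admissible states, since the two sums also contain terms of weight zero that must be seen to match --- you handle this in spirit when observing that vanishing $g_Q(r,s)$ on the crystal side corresponds to non-$n_Q$-admissibility on the ice side, but the opening definition of $\Psi$ should be adjusted accordingly.
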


Note: for the bilinear form corresponding to the dot product, which has $n_Q = n$, this is Theorem 2.5 of \cite{BBBIce}; however, to prove the general case requires modifications to the existing literature, since many of the results used in \cite{BBBIce} are also proven only in the dot product case.

\begin{proof} Recall from Section \ref{whitcalcs} that we have an expression for $I_\lambda$ in terms of a sum over nodes in the Kashiwara crystal $B(\lambda+ \rho)$:
\begin{align} I_\lambda = \sum_{(\boldsymbol{i},\boldsymbol{m}) \in B(\lambda + \rho)} \prod_{\a\in \Phi^+} w(\boldsymbol{m},\a)x_\a^{m_\a}.\end{align}\label{WZsum}

There is a bijection between nodes $v = (\boldsymbol{i},\boldsymbol{m}) \in B(\lambda + \rho)$ and Gelfand-Tsetlin patterns $\frak{T}$ with top row $\lambda + \rho$ \cite{BBF}. We direct the reader to \cite{BBFbook} and \cite{Littelmann} for a careful treatment of how Gelfand-Tsetlin patterns relate to crystals and ice models in general; for the purposes of this paper, it suffices to know the connection for Type A. Without loss of generality, we choose $\boldsymbol{i}$ to be the decomposition of the long word as $w_0 = s_r(s_{r-1}s_r)(s_{r-2}s_{r-1}s_r) \cdots(s_1s_2\cdots s_{r-1}s_r)$ as in Section \ref{whitcalcs} and in \cite{BBF}. We have one entry in $\boldsymbol{m} = (m_\a)_{\a\in\Phi^+}$ for every positive root $\a = (i,j)$, which we index by their matrix representations. Given a strict Gelfand-Tsetlin pattern
\begin{align*}
\frak{T} = \left\{ \begin{matrix} a_{0,0} &&a_{0,1} && a_{0,2} &&\cdots&a_{0,r-2}&&a_{0,r-1}\\
&a_{1,1}&&a_{1,2}&&a_{1,3}&\cdots&&a_{1,r-1}&\\
&&\ddots&&&&&\iddots\\
&&&&&a_{r-1,r-1}
\end{matrix} \right\}
\end{align*}
the bijection gives us that $m_{i,j} = a_{r-j+2, r-i} - a_{r-j+1, r-i}$. Note that the inequalities of Proposition \ref{mdef} derive from the Gelfand-Tsetlin pattern inequalities $a_{k,l} \leq a_{k+1,l} \leq a_{k, l -1}$.

Define
\[ G((\boldsymbol{i},\boldsymbol{m})) = \prod_{\a \in \Phi^+} w(\boldsymbol{m},\a). \]
where the weighting function on the crystal is defined in Equation (\ref{weighting}). 

We claim that $G((\boldsymbol{i},\boldsymbol{m}))$ gives the same coefficient as does the Gelfand-Tsetlin description of the Whittaker function. Recall that we may weight a Gelfand-Tsetlin pattern $\frak{T}$ by attaching an integer $e_{i,j}$ defined as
\[ e_{i,j} = \sum_{k=j}^{r-1} a_{i,k} - a_{i-1,k}\]
to each entry $a_{i,j}$, then assigning a weight $\g(a_{i,j})$ defined by
\[ \g(a_{i,j}) = \begin{cases} q^{e_{i,j}} & \text{ if } a_{i-1,j-1} > a_{i,j} = a_{i-1,j}\\ g(e_{i,j},0) & \text{ if } a_{i-1,j-1} > a_{i,j} > a_{i-1,j}\\ g(e_{i,j},-1) & \text{ if } a_{i-1,j-1} = a_{i,j} > a_{i-1,j}\\  0 & \text{ if } a_{i-1,j-1} = a_{i,j} = a_{i-1,j}\\\end{cases},\]
where $g(a,b)$ is the Gauss sum used in \cite{McNcrystal}. The weight of the whole pattern is then $G(\frak{T}) = \prod_{1\leq i\leq j\leq r-1} \g(a_{i,j})$.

Examining the correspondence, we see that $m_\a$ is boxed if its corresponding pattern entry is left-leaning, in which case $s_\a = -1$, and circled if its entry is right-leaning. If $m_\a$ is neither boxed nor circled, the corresponding entry has strict inequalities, in which case $s_\a \geq 0$, and we recall that $g(a,b) = g(a,0)$ if $b\geq 0$. For $\a = (i,j)$, we have that $r_{i,j} = e_{r-j+2,r-i}$. Therefore, for $\frak{T}$ the Gelfand-Tsetlin pattern corresponding to $v = (\boldsymbol{i},\boldsymbol{m})$:
\[ G(v) = G(\frak{T}).\]
As in Theorem \ref{crystalsum}, the modification to the proof of Proposition 5 of \cite{BBF} necessary for our case amounts to replacing the $n$-th Gauss sum with the $n_Q$-th Gauss sum. So we may rewrite our expression for $I_\lambda$ as a sum over Gelfand-Tsetlin patterns. Furthermore, by Proposition 1 of \cite{MetIce}, there is a bijection between strict Gelfand-Tsetlin patterns with top row $\lambda+\rho$ and admissible states of ice with boundary conditions $\lambda$. Recall that we form the Gelfand-Tsetlin pattern corresponding to an ice state by listing off the vertical columns with negative spin above each row, from top to bottom. 
For instance, given our choice of long word, the crystal node and Gelfand-Tsetlin pattern corresponding to the ice state in Figure \ref{icestate} are
\begin{align*}
(\boldsymbol{i},\boldsymbol{m}) = ((2,1,2), (0,2,1)))\hfill \text{ and } \hfill \frak{T} = \left\{\begin{matrix} 4&&3&&0\\&4&&2\\&&2 \end{matrix}\right\}
\end{align*}

The bijection between crystal node weights and Gelfand-Tsetlin pattern weights is irrespective of the choice of metaplectic cover of $G$; the only modification necessary for our case is to change all of the $n$-Gauss sums to $n_Q$-Gauss sums. Since we do this on both sides of the bijection, the proofs given by Brubaker, Bump, and Friedberg in \cite{BBF} for the cover corresponding to the dot product extend as written. On the other hand, while the bijection between Gelfand-Tsetlin patterns and ice states doesn't depend on the metaplectic cover, we must check that it is weight preserving, i.e. that $G(\frak{T})$ is the coefficient of $B^{n_Q}(\frak{s})$ for the matching ice state $\frak{s}$.  It suffices to check that terms that are not $n_Q$-admissible have weight 0. To do this, we note that the Gauss sums $g(r,s)$ with $s\geq 0$ are nonzero only when $s \equiv 0$ modulo our chosen modulus. These sums occur when an entry in the Gelfand-Tsetlin pattern is neither left-leaning nor right-leaning, which forces a vertex in the ice state to take type $\boldsymbol{c}_1$ with charge $s\equiv 0 \pmod{n_Q}$ on both adjacent horizontal edges. Changing our Gauss sum to modulus $n_Q$ then causes this weight to be zero unless the ice state is $n_Q$-admissible. Thus, up to normalization of the power of $\boldsymbol{z}$, these bijections allow us to rewrite the sum in (\ref{WZsum}) as the partition function of the system $\frak{S}_\lambda$. 
\end{proof}

For the sake of completeness, we determine the precise normalization necessary. Of particular note to this normalization will be the fact that when we consider the boundary conditions on a system, we take all charges as residues mod $n_Q$, and as in previous calculations, if $c_i \equiv 0\pmod{n_Q}$, we consider it as $c_i = n_Q$, rather than $c_i=0$.
 
\begin{Thm}\label{indivWhit}
Let $\frak{S}_\lambda$ be the set of boundary conditions given by the partition $\lambda$, as described in Section \ref{metice}; note that this includes a choice of $N$ columns. Let  $\g \in \Gamma$, where we choose representatives according to Remark \ref{choosy}. Let  $\boldsymbol{c} \in (\Z/n_Q\Z)^r$ be defined by $\boldsymbol{c} = [\overline{N}- \g -w_0 \rho]$, where $[\cdot]$ denotes the residue mod $n_Q$ (following the convention above), we set $w_0(x_1,...,x_r) := (x_r,...,x_1)$, and $\overline{k} := (k,....,k)$ for any integer $k$. Then using the weights in Figure \ref{modwts}, we have
\begin{align}\label{thm82}
Z(\frak{S}_\lambda; \boldsymbol{c}) = \boldsymbol{z}^{-\overline{N} + w_0\rho  + \boldsymbol{c}} \cdot \d^{-1/2}(\varpi^\lambda)\mathcal{W}_\gamma^{\boldsymbol{z}}(\varpi^\lambda).
\end{align} 
\end{Thm}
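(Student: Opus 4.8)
The plan is to reduce the statement to a clean identity between $I_{\g,\lambda}$ and the partition function $Z(\frak{S}_\lambda;\boldsymbol{c})$ up to an explicit Laurent monomial in $\boldsymbol{z}$, and to obtain that identity by refining, coset by coset, the chain of bijections already used to prove Theorems \ref{crystalsum} and \ref{BIGTHM}. First I would unwind the left-hand side: by equations (\ref{mainwhitint}) and (\ref{Ilambda}) one has $\mathcal{W}^{\boldsymbol{z}}_\g(\varpi^\lambda) = (\delta^{1/2}\chi_{\boldsymbol{z}})(\varpi^{w_0\lambda})\,I_{\g,\lambda}$, and since $\chi_{\boldsymbol{z}}(\varpi^{w_0\lambda}) = \boldsymbol{z}^{w_0\lambda}$ and $\delta^{1/2}(\varpi^\mu) = q^{-\langle\rho,\mu\rangle}$ for any $\mu$, the composite prefactor $\boldsymbol{z}^{-\overline{N}+w_0\rho+\boldsymbol{c}}\,\delta^{-1/2}(\varpi^\lambda)\,(\delta^{1/2}\chi_{\boldsymbol{z}})(\varpi^{w_0\lambda})$ collapses to a completely explicit monomial in $\boldsymbol{z}$ (any surviving power of $q$ is carried along and ultimately matched against the $q$-content of the Boltzmann weights). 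So the claim becomes: $I_{\g,\lambda}$ equals $Z(\frak{S}_\lambda;\boldsymbol{c})$ times this monomial, with $\boldsymbol{c}$ as prescribed.

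Next I would refine Theorem \ref{crystalsum} so that it tracks the coset $\g$. In the cell decomposition $U^- = \bigsqcup_{(\boldsymbol{i},\boldsymbol{m})}C^{\boldsymbol{i}}_{\boldsymbol{m}}$ recalled there, the Iwasawa torus component is constant on each cell; writing it $\varpi^{\mu(\boldsymbol{m})}$, where $\mu(\boldsymbol{m})$ is the explicit $\Z$-linear function of $\boldsymbol{m}$ coming from the accumulated $\varpi^{-m_\a}$-translations, the cell $C^{\boldsymbol{i}}_{\boldsymbol{m}}$ contributes to $I_{\g,\lambda}$ precisely when $\mu(\boldsymbol{m}) \equiv \g - w_0\lambda \pmod{n_Q\Lambda}$, in which case its contribution is the summand $\boldsymbol{z}^{\mu(\boldsymbol{m})}\prod_{\a\in\Phi^+}w(\boldsymbol{m},\a)$ of Theorem \ref{crystalsum}, under the specialization of the formal variables that makes $\prod_\a x_\a^{m_\a} = \boldsymbol{z}^{\mu(\boldsymbol{m})}$ (the specialization that reproduces $\chi_{\boldsymbol{z}}$ on the torus part). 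Hence $I_{\g,\lambda} = \sum \boldsymbol{z}^{\mu(\boldsymbol{m})}\prod_\a w(\boldsymbol{m},\a)$, summed over the crystal nodes $(\boldsymbol{i},\boldsymbol{m}) \in B(\lambda+\rho)$ lying in the single residue class $\mu(\boldsymbol{m}) \equiv \g - w_0\lambda$.

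Then I would push this sum through the two bijections from the proof of Theorem \ref{BIGTHM} — crystal nodes $\leftrightarrow$ Gelfand--Tsetlin patterns with top row $\lambda+\rho$ $\leftrightarrow$ admissible states in $\frak{S}_\lambda$. The weight identity $\prod_\a w(\boldsymbol{m},\a) = G(\frak{T})$, which the proof of Theorem \ref{BIGTHM} shows to be the coefficient of a monomial in $\boldsymbol{z}$ in $B^{(n_Q)}(\frak{s})$, is already in hand; what remains is (i) to identify, under these bijections, the weight $\mu(\boldsymbol{m})$ with the per-row content of the ice state read off by its left boundary charges, so that the residue condition $\mu(\boldsymbol{m}) \equiv \g - w_0\lambda$ becomes exactly the condition that the left charge vector be $\boldsymbol{c} = [\overline{N}-\g-w_0\rho]$, the residues being normalized into $\{1,\dots,n_Q\}$; and (ii) to match $\boldsymbol{z}^{\mu(\boldsymbol{m})}$ against the product of the $z_i^{-n_Q\delta(a)}$ and $z_i^{-n_Q}$ factors occurring in $B^{(n_Q)}(\frak{s})$, up to the global shift. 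Neither (i) nor (ii) depends on $F$ or on the cover, so combining them with the previous step gives $I_{\g,\lambda} = Z(\frak{S}_\lambda;\boldsymbol{c})$ times the monomial of the first step, which is the assertion.

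The main obstacle will be exactly this last bit of bookkeeping: matching $\boldsymbol{z}^{\mu(\boldsymbol{m})}$, an object indexed by the positive roots of $GL_r$ and the translation structure of $U^-$, against a product of $z_i$-powers indexed by the rows and columns of the $r\times N$ grid, while keeping straight how the $\rho$-shift in the top boundary $\lambda+\rho$, the column count $N$, and the convention that uses $c_i = n_Q$ rather than $c_i = 0$ all enter — this is precisely the content of the correction monomial $-\overline{N}+w_0\rho+\boldsymbol{c}$ and of the specific residue normalization in $\boldsymbol{c} = [\overline{N}-\g-w_0\rho]$. A secondary point requiring care is confirming that the left boundary charge of an ice state really does record the per-row $+$-spin count modulo $n_Q$ in the normalization compatible with the definition of $\Gamma$ and with Proposition \ref{TAU}; once these two indexing issues are settled, the remainder is the routine transport of already-established weight identities through the bijections.
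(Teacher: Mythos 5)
Your plan is correct and tracks the paper's proof essentially step for step: reduce via (\ref{mainwhitint}) and (\ref{Ilambda}) to an identity between $I_{\g,\lambda}$ and $Z(\frak{S}_\lambda;\boldsymbol{c})$, refine the crystal/Gelfand--Tsetlin/ice bijections of Theorems \ref{crystalsum} and \ref{BIGTHM} coset by coset, and then reconcile the $\boldsymbol{z}$-powers and the charge normalization. The one concrete piece you defer as ``bookkeeping'' is exactly where the paper spends the proof: it observes that, reading a row of an ice state right to left, charge increments only between a northern $-$ spin and the next southern $-$ spin, which yields the closed formula $c'_i = N - a_{r-i,r-i} + \sum_{j\ge r-i}(a_{r-i+1,j}-a_{r-i,j}) = N - p_i - (\lambda+\rho)_{r-i+1}$ and hence both the charge vector $\boldsymbol{c}=[\overline{N}-\g-w_0\rho]$ and, via the $\d(a)$-factors in Figure \ref{modwts}, the normalizing monomial $\boldsymbol{z}^{-\boldsymbol{c}'+\boldsymbol{c}}$; you would need to supply this computation (and note that the coset condition is modulo $\Lambda$, not $n_Q\Lambda$) to turn the sketch into a proof.
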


\begin{Example}\label{section8ex} Referring back to the ice state in Figure \ref{icestate}, we had $\lambda = (2,2,0)$ and $N = 5$. This state was $n_Q$-admissible for $n_Q = 1,2$, so suppose $n_Q = 2$; the charge is then $\boldsymbol{c} = (1,1,2)$. We calculated the Boltzmann weight of that state to be $-g_Q(1)(1-v)z_1^{-2}z_3^{-2}$. Via the bijection above, we see that the corresponding crystal node is $(\boldsymbol{i},\boldsymbol{m}) = ((2,1,2),(0,2,1))$, with $\boldsymbol{m} = (m_{12},m_{13},m_{23})$. This node contributes the term $g(3,-1)g(2,0)z_1^2z_2^1z_3^{-3}$ to $I_{(4,3,3), (2,2,0)}$ and thus the term $g(3,-1)g(2,0)z_1^2z_2^3z_3^{-1}$ to $\mathcal{W}_{(4,3,3)}^{\boldsymbol{z}}(\varpi^{(2,2,0)}).$ Note that this power of $\boldsymbol{z}$ is in $(4,3,3)\Lambda$.
The Gauss sums match up, since $g(3,-1) = g_Q(1)$ and $g(2,0) = 1-v$, and multiplying this crystal node term by the normalizing power $z^{(-4,-3,-1)}$, where here $(-4,-3,-1) = - (5,5,5) +(0,1,2)+ (1,1,2),$ gives the correct power of $z$ of our Boltzmann weight. 
\end{Example}

\begin{proof}
It suffices to determining a map between the charge $\boldsymbol{c}$ and the parameters $\g,\lambda$ that suits the bijection of the proof above, which then will provide the desired normalization factor. From Equation (\ref{Ilambda}), we know that $I_{\gamma,\lambda}$ is supported on powers of $\boldsymbol{z}$ in the coset $(\gamma-w_0\lambda)\Lambda$. (Accordingly, $\mathcal{W}_\g^{\boldsymbol{z}}(\varpi^\lambda)$ is supported on the coset $\gamma\Lambda$.) Translating between the powers $x_\a$ used by McNamara and our powers of $\boldsymbol{z}$ by $x_\a = \boldsymbol{z}^\a$, we see that the power of $\boldsymbol{z}^{\boldsymbol{p}}$ provided by the node $(\boldsymbol{i},\boldsymbol{m})$ is 
\[ p_i = \sum_{j> i}m_{i,j} - \sum_{k<i}m_{k,i}.\]
On the other hand, we may express the total charge of an ice state, before we reduce all our charges modulo $n_Q$, in terms of the entries of the corresponding Gelfand-Tsetlin pattern. As we travel along a row from right to left, the charge increases by one every time we encounter a vertex with a plus in the western position; since the right hand boundary is a minus sign, charge will not begin to accumulate until we hit a vertex of type $\tt{c_2}$. From there, each vertex will increase the charge by one until we encounter a vertex of type $\tt{c_1}$, at which point, charge remains stagnant until another $\tt{c_2}$ vertex occurs. In order to connect back to the pattern, we incorporate the type $\tt{b_1}$ vertex as well and notice that this allows us to say that charge starts incrementing when we hit a vertex with a minus in the northern position and stops incrementing when we reach a vertex with a minus in the southern position. (With a type $\tt{b_1}$ vertex, these are the same vertex, which explains the lack of interruption.) However, for row $i$, northern minus signs correspond to Gelfand-Tsetlin entries in row $r-i$ and southern ones to entries in row $r-i+1$. Thus, we have that the unmodulated charge $\boldsymbol{c}'$ is
\[ c'_i = N - a_{r-i,r-i} + \sum_{j \geq r-i} a_{r-i+1,j} - a_{r-i,j}.\]
Chasing our bijection between the Gelfand-Tsetlin pattern and the crystal node back through these entries, we may rewrite in terms of $m_{i,j}$:
\begin{align*} c'_i &= N - \sum_{j\geq i} m_{i,j} +  \sum_{k<i}m_{k,i} - (\lambda + \rho)_{r-i+1}\\
&= N - p_i - (\lambda + \rho)_{r-i+1}.
\end{align*}
Using $w_0(\lambda + \rho)_i = (\lambda + \rho)_{r-i+1}$, we then have that $\boldsymbol{c}' = \overline{N} - \boldsymbol{p} - w_0(\lambda +\rho)$. Since $\boldsymbol{p}$ is in the coset $(\g-w_0\lambda)\Lambda$, we have that after we take the residue mod $n_Q$,
\[ \boldsymbol{c} = [\overline{N} - \g - w_0\rho].\]
%do I have to worry about people being worried about which representative you take?
It remains to then consider the normalization factor between $Z(\frak{S}_\lambda;\boldsymbol{c})$ and $I_{\g,\lambda}$. Due to the $\d(a)$ factor in the weights of Figure \ref{modwts}, we increment a power of $z_i^{-n_Q}$ each time the unmodulated charge $c'_i$ on the eastern position of a vertex is divisible by $n_Q$. Since these all must occur on interior horizontal edges, we then see that the total power of $\boldsymbol{z}$ in $Z(\frak{S}_\lambda;\boldsymbol{c})$ is precisely $z^{-\boldsymbol{c}' +\boldsymbol{c}}$. Here is where our convention becomes crucial: if the charge increments to a multiple of $n_Q$ on the left-most boundary (i.e. $c_i' \equiv 0\pmod{n_Q}$), the interior horizontal edges will not have attained that last multiple of $n_Q$, and therefore we need $c_i$ to be $n_Q$ rather than $0$.
By  (\ref{mainwhitint}) and multiplicativity of the modular quasicharacter, $\mathcal{W}_\g^{\boldsymbol{z}} = \delta^{1/2}(\boldsymbol{z}^\lambda)\boldsymbol{z}^{w_0\lambda} \cdot I_{\g,\lambda}$. Then, since $-\boldsymbol{c}' + \boldsymbol{c} = -\overline{N} + w_0(\lambda + \rho) + \boldsymbol{p} + \boldsymbol{c}$, we have that
\begin{align*} Z(\frak{S}_\lambda; \boldsymbol{c}) = \boldsymbol{z}^{-\overline{N} + w_0\rho  + \boldsymbol{c}} \cdot \d^{-1/2}(\varpi^\lambda)\mathcal{W}_\gamma^{\boldsymbol{z}}(\varpi^\lambda). &\qedhere
\end{align*}
\end{proof}

\begin{Remark}\label{choosy}
For many metaplectic covers, Theorem \ref{indivWhit} is completely independent of the choice of representatives $\Gamma$: for these covers, the set $\Lambda \subset (n_Q\Z)^r$, and thus all representatives of a single coset of $\Gamma$ are equivalent modulo $n_Q$, so all representatives will map to the same charge. However, in cases where $\Lambda$ is not wholly contained in $(n_Q\Z)^r$, i.e. when $r, n, b, c,$ and $c-b$ have sufficiently many factors in common, we must carefully choose our coset representatives in $\Gamma$ to respect the role played by $\boldsymbol{p}$ in the proof above, specifically in the map to $\boldsymbol{c}$. Note that all all $\boldsymbol{p}$ have $\varpi^{\boldsymbol{p}} \in \twid{SL}_r(F)$ by construction, and the subset of $\Lambda$ contained in $\twid{SL}_r(F)$ is always wholly contained in $(n_Q\Z)^r$, so any $\boldsymbol{p}$ contributing to a given Whittaker function will have the same residue class modulo $n_Q$. For any Whittaker function that is nonzero on $\varpi^\lambda$, we thus require the representative $\gamma$ to satisfy $\gamma - w_0\lambda \equiv \boldsymbol{p} \pmod{n_Q}$ for any $\boldsymbol{p}$ appearing in this Whittaker function. To achieve this end, it suffices to choose representatives $\gamma$ such that $\varpi^{\gamma - w_0\lambda} \in SL_r(F)$ for these Whittaker functions, and representatives not equivalent to these modulo $n_Q$ for all remaining cosets.
\end{Remark}

\begin{Example}
For instance, our calculation in Example \ref{section8ex} applies to any metaplectic cover with $n_Q = 2$, but for certain choices, we may need a different representative for the coset $(4,3,3)\Lambda$. Consider $n=6$ and $r=3$. On the one hand, the cover corresponding to $B_{3,4}$, i.e. $b=3,c=4$, has $\Lambda \subset (2\Z)^3$, so every coset representative for the coset $(4,3,3)\Lambda$ will map to the same charge $\boldsymbol{c}$ (in the Example above, when $N=5$, this is the charge $\boldsymbol{c} = (1,1,2)$). On the other hand, the cover corresponding to $B_{3,5}$ has $(1,1,1) \in \Lambda$ and thus each coset splits into two residue classes modulo $2$. For the coset $(4,3,3)\Lambda$, we need to pick a representative $\gamma$ such that $\gamma - w_0\lambda \equiv (2,1,-3) \pmod{2}$, where $(2,1,-3)$ is one $\boldsymbol{p}$ for this Whittaker function. We see that coincidentally the representative (4,3,3) still works, but without having already calculated $\boldsymbol{p}$ we might have accidentally chosen $(3,2,2)$ instead, which doesn't work. To dispense with the need to precalculate, we could instead choose the representative $(0,1,3)$ for this coset, since $(0,1,3) - (0,2,2) = (0,-1,1)$.
\end{Example}

%Put example here.

\begin{Cor}
Under the precise equivalence of Theorem \ref{indivWhit}, we see that the functional equation on partition functions provided by the Yang-Baxter equation reproduces the functional equation of the scattering matrix. That is, from either source,
\begin{align}\label{train} Z(\frak{S}_{\lambda,s_i\boldsymbol{z}};\boldsymbol{c}) = \emph{wt}\left( \begin{array}{c}\gamgam{+}{+}{+}{+}{a}{b\vphantom{b}}{b\vphantom{b}}{a}\end{array}\right) Z(\frak{S}_{\lambda,\boldsymbol{z}};\boldsymbol{c}) + \emph{wt}\left( \begin{array}{c}\gamgam{+}{+}{+}{+}{a}{b\vphantom{b}}{b\vphantom{b}}{a}\end{array}\right) Z(\frak{S}_{\lambda,\boldsymbol{z}}; s_i\boldsymbol{c}).
\end{align}
\end{Cor}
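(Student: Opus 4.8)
The plan is to obtain \eqref{train} by transporting the scattering-matrix functional equation \eqref{intonWhit} through the dictionary of Theorem \ref{indivWhit}; the phrase ``from either source'' also covers a direct lattice-model derivation via the Yang--Baxter equation of Theorem \ref{ybe1}, which I will indicate at the end. The mathematical content is already in place (Theorem \ref{indivWhit}, Propositions \ref{TAU} and \ref{tauR}), so the task is essentially to line up the normalizations correctly.

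First I would take $w=s_i$ in \eqref{intonWhit} and evaluate both sides at $\pi_{\boldsymbol{z}}(\varpi^\lambda)\phi_K^{\boldsymbol{z}}$. Since $\overline{\mathcal{A}}_{s_i}$ is normalized so that $\overline{\mathcal{A}}_{s_i}\phi_K^{\boldsymbol{z}}=\phi_K^{s_i\boldsymbol{z}}$ and commutes with the $\twid{G}$-action, the left side collapses to $\mathcal{W}^{s_i\boldsymbol{z}}_\nu(\varpi^\lambda)$, giving
\[ \mathcal{W}^{s_i\boldsymbol{z}}_\nu(\varpi^\lambda) = \sum_{\mu\in\Gamma}\tau_{\nu,\mu}(\boldsymbol{z})\,\mathcal{W}^{\boldsymbol{z}}_\mu(\varpi^\lambda). \]
By Proposition \ref{TAU} the only surviving terms are $\mu=\nu$, with coefficient $\tau^1_{\nu,\nu}(\boldsymbol{z})$, and $\mu=s_i\nu+\a_i^\vee$, with coefficient $\tau^2_{\nu,s_i\nu+\a_i^\vee}(\boldsymbol{z})$; in the degenerate case $c_i\equiv c_{i+1}\pmod{n_Q}$ these merge into a single term with coefficient $\tau^1+\tau^2$.

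Next I would substitute Theorem \ref{indivWhit} in the form $\mathcal{W}^{\boldsymbol{z}}_\g(\varpi^\lambda)=\boldsymbol{z}^{\overline{N}-w_0\rho-\boldsymbol{c}}\,\d^{1/2}(\varpi^\lambda)\,Z(\frak{S}_\lambda;\boldsymbol{c})$ with $\boldsymbol{c}=[\overline{N}-\g-w_0\rho]$, for each of the three occurring Whittaker values. Two bookkeeping points make this clean. First, if $\boldsymbol{c}$ is the charge attached to $\nu$ then $s_i\boldsymbol{c}$ is the charge attached to $s_i\nu+\a_i^\vee$ (as residues mod $n_Q$, with the convention $0\mapsto n_Q$): indeed $\rho+w_0\rho=\overline{r-1}$ is fixed by $s_i$, while $s_i$ acts on $\boldsymbol{c}$ exactly by exchanging its $i$-th and $(i+1)$-st coordinates, so $[\overline{N}-(s_i\nu+\a_i^\vee)-w_0\rho]=s_i\boldsymbol{c}$. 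Second, the power-of-$\boldsymbol{z}$ prefactor is not permutation-invariant: writing $\xi=-\overline{N}+w_0\rho+\boldsymbol{c}$ one finds $(s_i\boldsymbol{z})^{\xi}/\boldsymbol{z}^{\xi}=\boldsymbol{z}^{(c_{i+1}-c_i+1)\a_i^\vee}$, and likewise the ratio of $\boldsymbol{z}^{\overline{N}-w_0\rho-s_i\boldsymbol{c}}$ to $(s_i\boldsymbol{z})^{\overline{N}-w_0\rho-\boldsymbol{c}}$ equals $\boldsymbol{z}^{\a_i^\vee}$. Dividing the displayed Whittaker identity through by $(s_i\boldsymbol{z})^{\overline{N}-w_0\rho-\boldsymbol{c}}\d^{1/2}(\varpi^\lambda)$ and inserting these ratios, the monomials $\boldsymbol{z}^{(c_{i+1}-c_i+1)\a_i^\vee}$ and $\boldsymbol{z}^{\a_i^\vee}$ cancel exactly against the monomials $\boldsymbol{z}^{(c_i-c_{i+1}-1)\a_i^\vee}$ and $\boldsymbol{z}^{-\a_i^\vee}$ carried by $\tau^1$ and $\tau^2$ in Proposition \ref{tauR}. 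What remains of $\tau^1_{\nu,\nu}$ and $\tau^2_{\nu,s_i\nu+\a_i^\vee}$ is precisely the pair of all-positive-spin $R$-vertex weights of Figure \ref{modR} appearing on the right of \eqref{train}, and in the case $c_i\equiv c_{i+1}$ the combined identity of Proposition \ref{tauR} delivers the corresponding combined $R$-weight while the two summands of \eqref{train} coincide. This gives \eqref{train}.

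Alternatively, \eqref{train} can be read off the lattice model directly: attach the $R$-vertex $R_{z_i,z_{i+1}}$ at the left of the two-row strip formed by rows $i$ and $i+1$ of $\frak{S}_\lambda$ and transport it across the strip column by column using Theorem \ref{ybe1}; at the right edge the strip terminates in the decorated spins $(-,0),(-,0)$, where the only admissible $R$-vertex is the all-minus configuration of weight $1$ from Figure \ref{modR}, so the $R$-vertex is absorbed trivially there. Equating the two positions of the $R$-vertex, and summing over its two admissible left configurations --- which leave the left charge vector as $\boldsymbol{c}$ or replace it by $s_i\boldsymbol{c}$, in each case interchanging the two spectral parameters --- reproduces \eqref{train}. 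In either route the only real difficulty is bookkeeping: keeping the normalizing power of $\boldsymbol{z}$ from Theorem \ref{indivWhit} and the charge-to-coweight correspondence $\boldsymbol{c}\leftrightarrow\g$ (including the $0\mapsto n_Q$ convention) aligned so that the scattering coefficients of Proposition \ref{tauR} land exactly on the tabulated $R$-vertex weights.
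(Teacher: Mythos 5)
Your proposal is correct and follows essentially the same two-pronged approach as the paper: a ``train argument'' transporting the $R$-vertex across the two-row strip via Theorem \ref{ybe1}, and a direct translation of the scattering identity through Theorem \ref{indivWhit} using the power-of-$\boldsymbol{z}$ identities and Proposition \ref{tauR}. The paper presents the lattice route first (starting with the $R$-vertex at the right of the $s_i\boldsymbol{z}$ strip, where its state is forced, and pushing left) while you present it second and in the opposite direction, but these are interchangeable; your derivation of the charge correspondence $\nu\mapsto s_i\nu+\alpha_i^\vee\;\leftrightarrow\;\boldsymbol{c}\mapsto s_i\boldsymbol{c}$ via the $s_i$-invariance of $\rho+w_0\rho$ is an equivalent repackaging of the paper's shift argument $\overline{N}-w_0\rho=\rho+\overline{k}$, and your explicit treatment of the degenerate case $c_i\equiv c_{i+1}\pmod{n_Q}$ is a welcome detail the paper leaves implicit.
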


\begin{proof}
We start by evaluating the partition function of the following system:
\[
\begin{array}{c}
\scalebox{.85}{\begin{tikzpicture}
  \draw (-0.5,0)--(3,0);
  \draw (-0.5,2)--(3,2);
  \draw [thick, dotted] (3,0)-- (5,0);
  \draw [thick, dotted] (3,2)-- (5,2);
  \draw (5,0) -- (8,0);
  \draw (5,2) -- (8,2);
  \draw (1,-1)--(1,3);
  \draw (3,-1)--(3,3);
  \draw (5,-1)--(5,3);
  \draw (7,-1)--(7,3);
  \coordinate (a1) at (8,0);
  \coordinate (c1) at (10,2);
  \coordinate (a2) at (8,2);
  \coordinate (c2) at (10,0);
  \draw (a1) to [out=0,in=180] (c1);
  \draw (a2) to [out=0,in=180] (c2);
  \draw[fill=white] (-0.5,2) circle (.25);
  \draw[fill=white] (-0.5,0) circle (.25);
  \draw[fill=white] (10,2) circle (.25);
  \draw[fill=white] (10,0) circle (.25);
  \path[fill=white] (1,0) ellipse (.5cm and .3cm);
  \path[fill=white] (1,2) ellipse (.3cm and .3cm);
  \path[fill=white] (3,0) ellipse (.5cm and .3cm);
  \path[fill=white] (3,2) ellipse (.3cm and .3cm);
  \path[fill=white] (5,0) ellipse (.5cm and .3cm);
  \path[fill=white] (5,2) ellipse (.3cm and .3cm);
  \path[fill=white] (7,0) ellipse (.5cm and .3cm);
  \path[fill=white] (7,2) ellipse (.3cm and .3cm);
  \node at (1,0) {$z_{i+1}$};
  \node at (1,2) {$z_i$};
  \node at (3,0) {$z_{i+1}$};
  \node at (3,2) {$z_i$};
  \node at (5,0) {$z_{i+1}$};
  \node at (5,2) {$z_i$};
  \node at (7,0) {$z_{i+1}$};
  \node at (7,2) {$z_i$};
  \node at (-0.5,0.5) {$\scriptstyle c_i$};
  \node at (-0.5,2.5) {$\scriptstyle c_{i+1}$};
  \node at (-0.5,2){$+$};
  \node at (-0.5,0){$+$};
  \node at (10,0){$-$};
  \node at (10,2){$-$};
  \node at (4,1) {$\cdots$};
  \end{tikzpicture}}
  \end{array}.
  \]
We have only one choice of R-vertex (type $\tt{a}_2$), so this partition function gives the left hand side of (\ref{train}). Then, repeatedly applying Theorem \ref{ybe1} to push the R-vertex through to the other side in a ``train argument," this is equal to the partition function of the following lattice:
 \[\begin{array}{c}
 \scalebox{.85}{\begin{tikzpicture}
  \draw (0,0)--(3,0);
  \draw (0,2)--(3,2);
  \draw [thick, dotted] (3,0)-- (5,0);
  \draw [thick, dotted] (3,2)-- (5,2);
  \draw (5,0) -- (8,0);
  \draw (5,2) -- (8,2);
  \draw (1,-1)--(1,3);
  \draw (3,-1)--(3,3);
  \draw (5,-1)--(5,3);
  \draw (7,-1)--(7,3);
  \coordinate (a1) at (-2,0);
  \coordinate (c1) at (0,2);
  \coordinate (a2) at (-2,2);
  \coordinate (c2) at (0,0);
  \draw (a1) to [out=0,in=180] (c1);
  \draw (a2) to [out=0,in=180] (c2);
  \draw[fill=white] (-2,2) circle (.25);
  \draw[fill=white] (-2,0) circle (.25);
  \draw[fill=white] (8,2) circle (.25);
  \draw[fill=white] (8,0) circle (.25);
  \path[fill=white] (1,2) ellipse (.5cm and .3cm);
  \path[fill=white] (1,0) ellipse (.3cm and .3cm);
  \path[fill=white] (3,2) ellipse (.5cm and .3cm);
  \path[fill=white] (3,0) ellipse (.3cm and .3cm);
  \path[fill=white] (5,2) ellipse (.5cm and .3cm);
  \path[fill=white] (5,0) ellipse (.3cm and .3cm);
  \path[fill=white] (7,2) ellipse (.5cm and .3cm);
  \path[fill=white] (7,0) ellipse (.3cm and .3cm);
  \node at (1,2) {$z_{i+1}$};
  \node at (1,0) {$z_i$};
  \node at (3,2) {$z_{i+1}$};
  \node at (3,0) {$z_i$};
  \node at (5,2) {$z_{i+1}$};
  \node at (5,0) {$z_i$};
  \node at (7,2) {$z_{i+1}$};
  \node at (7,0) {$z_i$};
  \node at (-2,2){$+$};
  \node at (-2,0){$+$};
  \node at (8,0){$-$};
  \node at (8,2){$-$};
  \node at (-2,0.5) {$\scriptstyle c_i$};
  \node at (-2,2.5) {$\scriptstyle c_{i+1}$};
  \node at (4, 1) {$\cdots$};
  \end{tikzpicture}}
  \end{array}.
  \]
Here, we have two choices for the R-vertex: either $c_i$ and $c_{i+1}$ do not swap, which yields the first term on the right hand side of (\ref{train}), or they do swap, which yields the second term.  

Alternately, we have $\mathcal{W}_\g^{s_i\boldsymbol{z}}\circ \overline{\mathcal{A}_{s_i}} = \tau^1_{\g,\g}\mathcal{W}_\g^{\boldsymbol{z}} + \tau^2_{\g,s_i\g +\a_i} \mathcal{W}_{s_i\g+\a_i}^{\boldsymbol{z}}$. Note that $\overline{N} - w_0\rho = \rho + \overline{k}$ for some $k$, so we have $\g =\rho- \boldsymbol{c}+\overline{k}$, where the $k$ shift accounts for the choice of $N$. Then $s_i\g+\a_i^\vee$ corresponds to $s_i\boldsymbol{c}$ under the same shift. Since
\[ \boldsymbol{z}^{\overline{N} - w_0\rho - \boldsymbol{c}}(s_i\boldsymbol{z})^{-\overline{N}+w_0\rho+c} = \boldsymbol{z}^{(-c_i+c_{i+1}+1)\a_i} \hspace{1cm} \text{ and } \hspace{1cm}\boldsymbol{z}^{\overline{N} - w_0\rho - s_i\boldsymbol{c}}(s_i\boldsymbol{z})^{-\overline{N}+w_0\rho+c} =\boldsymbol{z}^{\a_i},\]
when we plug in from Theorem \ref{indivWhit} the factors appearing from the powers of $\boldsymbol{z}$ out front will cancel with the factors of $\boldsymbol{z}$ appearing in Proposition \ref{tauR}, leaving just the R-vertex weights and the desired functional equation.
\end{proof}

\section{Appendix} \label{appendix}

These are the complete calculations for the proof of Theorem \ref{ybe1}. For readability, we omit the denominators in the $R$-weights. Thus, the actual values of these calculations are the printed value divided by $1-v\mathbf{z}^{n_Q\a}$. The orientation of tables follows that in Example \ref{nicecase}, where the central table is the system of boundary conditions, and the second set of tables are the states for that system on the left and right sides of Equation (\ref{mybe}), respectively. In cases where the second set of tables is stacked, the top one is the left hand side and the bottom the right hand side.

\textbf{Case 1a:}
\begin{center}
\begin{tabular}{|c|c|c|c|c|c|}\hline$k+1$&$k+1$&&$k$&$k$&\\+&+&+&+&+&+ \\\hline\end{tabular}
\end{center}
\begin{center}
\begin{tabular}{|c|c|c|c|}
\hline\small{$k+1$}&\small{$k+1$}&&\multirow{2}{*}{\small{$(z_iz_j)^{-n_Q\d(k)}\left(-v+\zz^{n_Q\a}\right)$}}\\+&+&+&\\\hline
\end{tabular}
\hspace{0.5cm}
\begin{tabular}{|c|c|c|c|}
\hline\small{$k+1$}&\small{$k+1$}&&\multirow{2}{*}{\small{$(z_iz_j)^{-n_Q\d(k)}\left(-v+\zz^{n_Q\a}\right)$}}\\+&+&+&\\\hline
\end{tabular}
\end{center}

\textbf{Case 1b:} $k\not\equiv \ell \pmod{n_Q}$.
\begin{center}
\begin{tabular}{|c|c|c|c|c|c|}\hline$\ell+1$&$k+1$&&$k$&$\ell$&\\+&+&+&+&+&+ \\\hline\end{tabular}
\end{center}
\begin{center}
\begin{tabular}{|c|c|c|c|}
\hline\small{$k+1$}&\small{$\ell+1$}&&\multirow{2}{*}{\small{$(1-v)z_i^{-n_Q\d(\ell)}z_j^{-n_Q\d(k)}\cdot\begin{cases}\zz^{n_Q\a} & k+1> \ell+1\\1&k+1<\ell+1\end{cases}$}}\\+&+&+&\\\hline
\end{tabular}

\begin{tabular}{|c|c|c|c|}
\hline\small{$k$}&\small{$\ell$}&&\multirow{2}{*}{\small{$(1-v)z_i^{-n_Q\d(k)}z_j^{-n_Q\d(\ell)}\cdot\begin{cases}\zz^{n_Q\a} & k>\ell\\1&k<\ell\end{cases}$}}\\+&+&+&\\\hline
\end{tabular}
\end{center}
Note: if $k\equiv 0\pmod{n_Q}$ but $\ell\not\equiv 0\pmod{n_Q}$, then $k >\ell \pmod{n_Q}$ but $k+1<\ell+1\pmod{n_Q}$, so the cancellation does work out properly, and vice versa.

\textbf{Case 1c:} $k\not\equiv \ell \pmod{n_Q}$.
\begin{center}
\begin{tabular}{|c|c|c|c|c|c|}\hline$k+1$&$\ell+1$&&$k$&$\ell$&\\+&+&+&+&+&+ \\\hline\end{tabular}
\end{center}
\begin{center}
\begin{tabular}{|c|c|c|c|}
\hline\small{$k+1$}&\small{$\ell+1$}&&\multirow{2}{*}{\small{$g_Q(\ell-k)z_i^{-n_Q\d(\ell)}z_j^{-n_Q\d(a)}\left(1-\zn\right)$}}\\+&+&+&\\\hline
\end{tabular}

\begin{tabular}{|c|c|c|c|}
\hline\small{$k$}&\small{$\ell$}&&\multirow{2}{*}{\small{$g_Q(\ell-k)z_i^{-n_Q\d(\ell)}z_j^{-n_Q\d(k)}\left(1-\zn\right)$}}\\+&+&+&\\\hline
\end{tabular}
\end{center}

\textbf{Case 2:} $(+,+,+,-,-,+)$. There are no admissible configurations in this case.\\

\textbf{Case 3a:} $k\not\equiv 0\pmod{n_Q}$.

\vspace{-0.7cm}
\begin{center}
\begin{tabular}{|c|c|c|c|c|c|}\hline$k+1$&$0$&&$0$&$k$&\\+&$-$&+&$-$&+&+ \\\hline\end{tabular}
\end{center}
\begin{center}
\begin{tabular}{|c|c|c|c|}
\hline\small{0}&\small{$k+1$}&&\multirow{2}{*}{\small{$1-v$}}\\$-$&+&+&\\\hline
\end{tabular}
\hspace{0.5cm}
\begin{tabular}{|c|c|c|c|}
\hline\small{0}&\small{$k$}&&\multirow{2}{*}{\small{$1-v$}}\\$-$&+&+&\\\hline
\end{tabular}
\end{center}

\textbf{Case 3b:} 

\vspace{-0.7cm}
\begin{center}
\begin{tabular}{|c|c|c|c|c|c|}\hline$1$&$0$&&$0$&$0$&\\+&$-$&+&$-$&+&+ \\\hline\end{tabular}
\end{center}
\begin{center}
\begin{tabular}{|c|c|c|c|}
\hline\small{0}&\small{1}&&\multirow{2}{*}{\small{$(1-v)z_i^{-n_Q}$}}\\$-$&+&+&\\\hline
\end{tabular}
\hspace{0.5cm}
\begin{tabular}{|c|c|c|c|}
\hline\small{0}&\small{$0$}&&\multirow{2}{*}{\small{$(1-v)z_i^{-n_Q}(1-\zz^{n_Q\a})$}}\\+&$-$&$-$&\\\hline\small{0}&\small{$0$}&&\multirow{2}{*}{\small{$(1-v)z_j^{-n_Q}$}}\\$-$&+&+&\\\hline
\end{tabular}
\end{center}

\textbf{Case 4a:} $k\nequiv 0\pmod{n_Q}$.

\vspace{-0.7cm}
\begin{center}
\begin{tabular}{|c|c|c|c|c|c|}\hline$k+1$&$0$&&$k$&$0$&\\+&$-$&+&+&$-$&+ \\\hline\end{tabular}
\end{center}
\begin{center}
\begin{tabular}{|c|c|c|c|}
\hline\small{$k+1$}&\small{0}&&\multirow{2}{*}{\small{$v(1-\zn)$}}\\+&$-$&+&\\\hline
\end{tabular}
\hspace{0.5cm}
\begin{tabular}{|c|c|c|c|}
\hline\small{0}&\small{$k$}&&\multirow{2}{*}{\small{$v(1-\zz^{n_Q\a})$}}\\$-$&+&+&\\\hline\end{tabular}
\end{center}

\textbf{Case 4b:}

\vspace{-0.7cm}
\begin{center}
\begin{tabular}{|c|c|c|c|c|c|}\hline$1$&$0$&&$0$&$0$&\\+&$-$&+&+&$-$&+ \\\hline\end{tabular}
\end{center}
\begin{center}
\begin{tabular}{|c|c|c|c|}
\hline\small{1}&\small{0}&&\multirow{2}{*}{\small{$vz_j^{-n_Q}(1-\zn)$}}\\+&$-$&+&\\
\hline\small{0}&\small{1}&&\multirow{2}{*}{\small{$(1-v)^2z_j^{-n_Q}$}}\\$-$&+&$-$&\\\hline
\end{tabular}
\hspace{0.5cm}
\begin{tabular}{|c|c|c|c|}
\hline\small{0}&\small{$0$}&&\multirow{2}{*}{\small{$(1-v)^2z_i^{-n_Q}\zn$}}\\+&$-$&$-$&\\\hline\small{0}&\small{$0$}&&\multirow{2}{*}{\small{$vz_j^{-n_Q}(1-\zn)$}}\\$-$&+&+&\\\hline
\end{tabular}
\end{center}

\textbf{Case 5:} 

\vspace{-0.7cm}
\begin{center}
\begin{tabular}{|c|c|c|c|c|c|}\hline$0$&$k+1$&&$0$&$k$&\\$-$&+&+&$-$&+&+ \\\hline\end{tabular}
\end{center}
\begin{center}
\begin{tabular}{|c|c|c|c|}
\hline\small{0}&\small{$k+1$}&&\multirow{2}{*}{\small{$z_i^{-n_Q\d(k)}(1-\zn)$}}\\$-$&+&+&\\\hline
\end{tabular}
\hspace{0.5cm}
\begin{tabular}{|c|c|c|c|}
\hline\small{$k$}&\small{0}&&\multirow{2}{*}{\small{$z_i^{-n_Q\d(k)}(1-\zn)$}}\\+&$-$&+&\\\hline\end{tabular}
\end{center}

\textbf{Case 6a:} $k\nequiv 0\pmod{n_Q}$.

\vspace{-0.7cm}
\begin{center}
\begin{tabular}{|c|c|c|c|c|c|}\hline$0$&$k+1$&&$k$&$0$&\\$-$&+&+&+&$-$&+ \\\hline\end{tabular}
\end{center}
\begin{center}
\begin{tabular}{|c|c|c|c|}
\hline\small{$k+1$}&\small{0}&&\multirow{2}{*}{\small{$(1-v)\zn$}}\\+&$-$&+&\\\hline
\end{tabular}
\hspace{0.5cm}
\begin{tabular}{|c|c|c|c|}
\hline\small{$k$}&\small{0}&&\multirow{2}{*}{\small{$(1-v)\zn$}}\\+&$-$&+&\\\hline\end{tabular}
\end{center}

\textbf{Case 6b:}

\vspace{-0.7cm}
\begin{center}
\begin{tabular}{|c|c|c|c|c|c|}\hline$0$&$1$&&$0$&$0$&\\$-$&+&+&+&$-$&+ \\\hline\end{tabular}
\end{center}
\begin{center}
\begin{tabular}{|c|c|c|c|}
\hline\small{$1$}&\small{0}&&\multirow{2}{*}{\small{$(1-v)z_j^{-n_Q}\zn$}}\\+&$-$&+&\\\hline\small{$0$}&\small{$1$}&&\multirow{2}{*}{\small{$(1-v)z_j^{-n_Q}(1-\zn)$}}\\$-$&+&$-$&\\\hline
\end{tabular}
\hspace{0.5cm}
\begin{tabular}{|c|c|c|c|}
\hline\small{0}&\small{$0$}&&\multirow{2}{*}{\small{$(1-v)z_i^{-n_Q}\zn$}}\\+&$-$&+&\\\hline\end{tabular}
\end{center}

\textbf{Case 7:} $(-,-,+,+,+,+)$. There are no admissible configurations in this case.\\

\textbf{Case 8:}

\vspace{-0.7cm}
\begin{center}
\begin{tabular}{|c|c|c|c|c|c|}\hline$0$&$0$&&$0$&$0$&\\$-$&$-$&+&$-$&$-$&+ \\\hline\end{tabular}
\end{center}
\begin{center}
\begin{tabular}{|c|c|c|c|}
\hline\small{$0$}&\small{0}&&\multirow{2}{*}{\small{$1-v\zn$}}\\$-$&$-$&+&\\\hline
\end{tabular}
\hspace{0.5cm}
\begin{tabular}{|c|c|c|c|}
\hline\small{$0$}&\small{0}&&\multirow{2}{*}{\small{$1-v\zn$}}\\$-$&$-$&+&\\\hline\end{tabular}
\end{center}

\textbf{Case 9a:} $k\nequiv 0\pmod{n_Q}$.

\vspace{-0.7cm}
\begin{center}
\begin{tabular}{|c|c|c|c|c|c|}\hline$1$&$k+1$&&$0$&$k$&\\+&+&$-$&$-$&+&+ \\\hline\end{tabular}
\end{center}
\begin{center}
\begin{tabular}{|c|c|c|c|}
\hline\small{$1$}&\small{$k+1$}&&\multirow{2}{*}{\small{$g_Q(k)(1-\zn)$}}\\+&+&+&\\\hline
\end{tabular}
\hspace{0.5cm}
\begin{tabular}{|c|c|c|c|}
\hline\small{$k$}&\small{0}&&\multirow{2}{*}{\small{$g_Q(k)(1-\zn)$}}\\+&$-$&$-$&\\\hline\end{tabular}
\end{center}

\textbf{Case 9b:} $k\nequiv 0\pmod{n_Q}$.

\vspace{-0.7cm}
\begin{center}
\begin{tabular}{|c|c|c|c|c|c|}\hline$k+1$&$1$&&$0$&$k$&\\+&+&$-$&$-$&+&+ \\\hline\end{tabular}
\end{center}
\begin{center}
\begin{tabular}{|c|c|c|c|}
\hline\small{$1$}&\small{$k+1$}&&\multirow{2}{*}{\small{$1-v$}}\\+&+&+&\\\hline
\end{tabular}
\hspace{0.5cm}
\begin{tabular}{|c|c|c|c|}
\hline\small{$0$}&\small{$k$}&&\multirow{2}{*}{\small{$1-v$}}\\$-$&+&+&\\\hline\end{tabular}
\end{center}

Note that $k+1 > 1$ here; if $k+1 \equiv 0\pmod{n_Q}$, we treat it as $n_Q$, not as $0$, although we write it as $0$.

\textbf{Case 9c:}

\vspace{-0.7cm}
\begin{center}
\begin{tabular}{|c|c|c|c|c|c|}\hline$1$&$1$&&$0$&$0$&\\+&+&$-$&$-$&+&+ \\\hline\end{tabular}
\end{center}
\begin{center}
\begin{tabular}{|c|c|c|c|}
\hline\small{$1$}&\small{$1$}&&\multirow{2}{*}{\small{$z_i^{-n_Q}(-v+\zn)$}}\\+&+&+&\\\hline
\end{tabular}
\hspace{0.5cm}
\begin{tabular}{|c|c|c|c|}
\hline\small{$0$}&\small{0}&&\multirow{2}{*}{\small{$-vz_i^{-n_Q}(1-\zn)$}}\\+&$-$&$-$&\\\hline\small{$0$}&\small{0}&&\multirow{2}{*}{\small{$(1-v)z_j^{-n_Q}$}}\\$-$&+&+&\\\hline
\end{tabular}
\end{center}

\textbf{Case 10a:} $k\nequiv 0\pmod{n_Q}$.

\vspace{-0.7cm}
\begin{center}
\begin{tabular}{|c|c|c|c|c|c|}\hline$1$&$k+1$&&$k$&$0$&\\+&+&$-$&+&$-$&+ \\\hline\end{tabular}
\end{center}
\begin{center}
\begin{tabular}{|c|c|c|c|}
\hline\small{$k+1$}&\small{$1$}&&\multirow{2}{*}{\small{$g_Q(k)(1-v)\zn$}}\\+&+&$-$&\\\hline
\end{tabular}
\hspace{0.5cm}
\begin{tabular}{|c|c|c|c|}
\hline\small{$k$}&\small{0}&&\multirow{2}{*}{\small{$g_Q(k)(1-v)\zn$}}\\+&$-$&$-$&\\\hline\end{tabular}
\end{center}

\textbf{Case 10b:} $k\nequiv 0\pmod{n_Q}$.

\vspace{-0.7cm}
\begin{center}
\begin{tabular}{|c|c|c|c|c|c|}\hline$k+1$&$1$&&$k$&$0$&\\+&+&$-$&+&$-$&+ \\\hline\end{tabular}
\end{center}
\begin{center}
\begin{tabular}{|c|c|c|c|}
\hline\small{$k+1$}&\small{$1$}&&\multirow{2}{*}{\small{$g_Q(-k)g_Q(k)(1-\zn)$}}\\+&+&$-$&\\\hline
\end{tabular}
\hspace{0.5cm}
\begin{tabular}{|c|c|c|c|}
\hline\small{$0$}&\small{$k$}&&\multirow{2}{*}{\small{$v(1-\zn)$}}\\$-$&+&+&\\\hline\end{tabular}
\end{center}

\textbf{Case 10c:}

\vspace{-0.7cm}
\begin{center}
\begin{tabular}{|c|c|c|c|c|c|}\hline$1$&$1$&&$0$&$0$&\\+&+&$-$&+&$-$&+ \\\hline\end{tabular}
\end{center}
\begin{center}
\begin{tabular}{|c|c|c|c|}
\hline\small{$1$}&\small{$1$}&&\multirow{2}{*}{\small{$-vz_j^{-n_Q}(-v+\zn)$}}\\+&+&$-$&\\\hline
\end{tabular}
\hspace{0.5cm}
\begin{tabular}{|c|c|c|c|}
\hline\small{$0$}&\small{0}&&\multirow{2}{*}{\small{$-v(1-v)z_i^{-n_Q}\zn$}}\\+&$-$&$-$&\\\hline\small{$0$}&\small{0}&&\multirow{2}{*}{\small{$vz_j^{-n_Q}(1-\zn)$}}\\$-$&+&+&\\\hline
\end{tabular}
\end{center}

\textbf{Case 11:} $(+,-,-,+,+,+)$. There are no admissible configurations in this case.\\

\textbf{Case 12:}

\vspace{-0.7cm}
\begin{center}
\begin{tabular}{|c|c|c|c|c|c|}\hline$1$&$0$&&$0$&$0$&\\+&$-$&$-$&$-$&$-$&+ \\\hline\end{tabular}
\end{center}
\begin{center}
\begin{tabular}{|c|c|c|c|}
\hline\small{$1$}&\small{$0$}&&\multirow{2}{*}{\small{$v(1-\zn)$}}\\+&$-$&+&\\\hline\small{$0$}&\small{$1$}&&\multirow{2}{*}{\small{$1-v$}}\\$-$&+&$-$&\\\hline
\end{tabular}
\hspace{0.5cm}
\begin{tabular}{|c|c|c|c|}
\hline\small{$0$}&\small{0}&&\multirow{2}{*}{\small{$1-v\zn$}}\\$-$&$-$&$-$&\\\hline\end{tabular}
\end{center}

\textbf{Case 13:} $(-,+,-,+,+,+)$. There are no admissible configurations in this case.\\

\textbf{Case 14:}

\vspace{-0.7cm}
\begin{center}
\begin{tabular}{|c|c|c|c|c|c|}\hline$0$&$1$&&$0$&$0$&\\$-$&+&$-$&$-$&$-$&+ \\\hline\end{tabular}
\end{center}
\begin{center}
\begin{tabular}{|c|c|c|c|}
\hline\small{$1$}&\small{$0$}&&\multirow{2}{*}{\small{$(1-v)\zn$}}\\+&$-$&+&\\\hline\small{$0$}&\small{$1$}&&\multirow{2}{*}{\small{$1-\zn$}}\\$-$&+&$-$&\\\hline
\end{tabular}
\hspace{0.5cm}
\begin{tabular}{|c|c|c|c|}
\hline\small{$0$}&\small{0}&&\multirow{2}{*}{\small{$1-v\zn$}}\\$-$&$-$&+&\\\hline\end{tabular}
\end{center}

\textbf{Case 15:} $(-,-,-,-,+,+)$. There are no admissible configurations in this case.\\

\textbf{Case 16:} $(-,-,-,+,-,+)$. There are no admissible configurations in this case.\\

\textbf{Case 17:} $(+,+,+,-,+,-)$. There are no admissible configurations in this case.\\

\textbf{Case 18:} $(+,+,+,+,-,-)$. There are no admissible configurations in this case.\\

\textbf{Case 19a:} $k\nequiv 0\pmod{n_Q}$.

\vspace{-0.7cm}
\begin{center}
\begin{tabular}{|c|c|c|c|c|c|}\hline$k+1$&$0$&&$0$&$k$&\\+&$-$&+&+&+&$-$ \\\hline\end{tabular}
\end{center}
\begin{center}
\begin{tabular}{|c|c|c|c|}
\hline\small{$0$}&\small{$k+1$}&&\multirow{2}{*}{\small{$g_Q(k)(1-v)^2z_j^{-n_Q}$}}\\$-$&+&$-$&\\\hline
\end{tabular}
\hspace{0.5cm}
\begin{tabular}{|c|c|c|c|}
\hline\small{$0$}&\small{$k$}&&\multirow{2}{*}{\small{$g_Q(k)(1-v)^2z_i^{-n_Q}\zn$}}\\$+$&$+$&$-$&\\\hline\end{tabular}
\end{center}

\textbf{Case 19b:} $k\nequiv 0\pmod{n_Q}$.

\vspace{-0.7cm}
\begin{center}
\begin{tabular}{|c|c|c|c|c|c|}\hline$k+1$&$0$&&$k$&$0$&\\+&$-$&+&+&+&$-$ \\\hline\end{tabular}
\end{center}
\begin{center}
\begin{tabular}{|c|c|c|c|}
\hline\small{$k+1$}&\small{$0$}&&\multirow{2}{*}{\small{$v(1-v)z_i^{-n_Q}(1-\zn)$}}\\$+$&$-$&$+$&\\\hline
\end{tabular}
\hspace{0.5cm}
\begin{tabular}{|c|c|c|c|}
\hline\small{$0$}&\small{$k$}&&\multirow{2}{*}{\small{$g_Q(k)g_Q(-k)(1-v)z_i^{-n_Q}(1-\zn)$}}\\$+$&$+$&$-$&\\\hline\end{tabular}
\end{center}

\textbf{Case 19c:} 

\vspace{-0.7cm}
\begin{center}
\begin{tabular}{|c|c|c|c|c|c|}\hline$1$&$0$&&$0$&$0$&\\+&$-$&+&+&+&$-$ \\\hline\end{tabular}
\end{center}
\begin{center}
\begin{tabular}{|c|c|c|c|}
\hline\small{$0$}&\small{$1$}&&\multirow{2}{*}{\small{$-v(1-v)^2(z_iz_j)^{-n_Q}$}}\\$-$&+&$-$&\\\hline\small{$1$}&\small{$0$}&&\multirow{2}{*}{\small{$v(1-v)(z_iz_j)^{-n_Q}(1-\zn)$}}\\$+$&$-$&$+$&\\\hline
\end{tabular}
\hspace{0.5cm}
\begin{tabular}{|c|c|c|c|}
\hline\small{$0$}&\small{$0$}&&\multirow{2}{*}{\small{$-v(1-v)(z_iz_j)^{-n_Q}(-v+\zn)$}}\\$+$&$+$&$-$&\\\hline\end{tabular}
\end{center}

\textbf{Case 20:} $(+,-,+,-,-,-)$. There are no admissible configurations in this case.\\

\textbf{Case 21a:} $k\nequiv0\pmod{n_Q}$.

\vspace{-0.7cm}
\begin{center}
\begin{tabular}{|c|c|c|c|c|c|}\hline$0$&$k+1$&&$k$&$0$&\\$-$&$+$&+&+&+&$-$ \\\hline\end{tabular}
\end{center}
\begin{center}
\begin{tabular}{|c|c|c|c|}
\hline\small{$k+1$}&\small{$0$}&&\multirow{2}{*}{\small{$(1-v)^2z_i^{-n_Q}\zn$}}\\+&$-$&+&\\\hline
\end{tabular}
\hspace{0.5cm}
\begin{tabular}{|c|c|c|c|}
\hline\small{$k$}&\small{$0$}&&\multirow{2}{*}{\small{$(1-v)^2z_j^{-n_Q}$}}\\$+$&$+$&$+$&\\\hline\end{tabular}
\end{center}

Note that $k<0$ here in the $R$-matrix on the right, because we treat $+0$ as $+n_Q$.

\textbf{Case 21b:} $k\nequiv 0\pmod{n_Q}$.

\vspace{-0.7cm}
\begin{center}
\begin{tabular}{|c|c|c|c|c|c|}\hline$0$&$k+1$&&$0$&$k$&\\$-$&$+$&+&+&+&$-$ \\\hline\end{tabular}
\end{center}
\begin{center}
\begin{tabular}{|c|c|c|c|}
\hline\small{$0$}&\small{$k+1$}&&\multirow{2}{*}{\small{$g_Q(k)(1-v)z_j^{-n_Q}(1-\zn)$}}\\$-$&$+$&$-$&\\\hline
\end{tabular}
\hspace{0.5cm}
\begin{tabular}{|c|c|c|c|}
\hline\small{$k$}&\small{$0$}&&\multirow{2}{*}{\small{$g_Q(k)(1-v)z_j^{-n_Q}(1-\zn)$}}\\$+$&$+$&$+$&\\\hline\end{tabular}
\end{center}

\textbf{Case 21c:} 

\vspace{-0.7cm}
\begin{center}
\begin{tabular}{|c|c|c|c|c|c|}\hline$0$&$1$&&$0$&$0$&\\$-$&$+$&+&+&+&$-$ \\\hline\end{tabular}
\end{center}
\begin{center}
\begin{tabular}{|c|c|c|c|}
\hline\small{$0$}&\small{$1$}&&\multirow{2}{*}{\small{$-v(1-v)(z_iz_j)^{-n_Q}(1-\zn)$}}\\$-$&$+$&$-$&\\\hline\small{$1$}&\small{$0$}&&\multirow{2}{*}{\small{$(1-v)^2(z_iz_j)^{-n_Q}\zn$}}\\$+$&$-$&$+$&\\\hline
\end{tabular}
\hspace{0.5cm}
\begin{tabular}{|c|c|c|c|}
\hline\small{$0$}&\small{$0$}&&\multirow{2}{*}{\small{$(1-v)(z_iz_j)^{-n_Q}(-v+\zn)$}}\\$+$&$+$&$+$&\\\hline\end{tabular}
\end{center}

\textbf{Case 22:} $(-,+,+,-,-,-)$. There are no admissible configurations in this case.\\

\textbf{Case 23:}

\vspace{-0.7cm}
\begin{center}
\begin{tabular}{|c|c|c|c|c|c|}\hline$0$&$0$&&$0$&$0$&\\$-$&$-$&+&$-$&+&$-$ \\\hline\end{tabular}
\end{center}
\begin{center}
\begin{tabular}{|c|c|c|c|}
\hline\small{$0$}&\small{$0$}&&\multirow{2}{*}{\small{$(1-v)z_i^{-n_Q}(1-v\zn)$}}\\$-$&$-$&$+$&\\\hline
\end{tabular}
\hspace{0.5cm}
\begin{tabular}{|c|c|c|c|}
\hline\small{$0$}&\small{$0$}&&\multirow{2}{*}{\small{$(1-v)^2z_j^{-n_Q}$}}\\$-$&$+$&$+$&\\\hline\small{$0$}&\small{$0$}&&\multirow{2}{*}{\small{$(1-v)z_i^{-n_Q}(1-\zn)$}}\\$+$&$-$&$-$&\\\hline
\end{tabular}
\end{center}

\textbf{Case 24:}

\vspace{-0.7cm}
\begin{center}
\begin{tabular}{|c|c|c|c|c|c|}\hline$0$&$0$&&$0$&$0$&\\$-$&$-$&+&$+$&$-$&$-$ \\\hline\end{tabular}
\end{center}
\begin{center}
\begin{tabular}{|c|c|c|c|}
\hline\small{$0$}&\small{$0$}&&\multirow{2}{*}{\small{$(1-v)z_j^{-n_Q}(1-v\zn)$}}\\$-$&$-$&$-$&\\\hline
\end{tabular}
\hspace{0.5cm}
\begin{tabular}{|c|c|c|c|}
\hline\small{$0$}&\small{$0$}&&\multirow{2}{*}{\small{$v(1-v)z_j^{-n_Q}(1-\zn)$}}\\$-$&$+$&$+$&\\\hline\small{$0$}&\small{$0$}&&\multirow{2}{*}{\small{$(1-v)^2z_i^{-n_Q}\zn$}}\\$+$&$-$&$-$&\\\hline
\end{tabular}
\end{center}

\textbf{Case 25a:}

\vspace{-0.7cm}
\begin{center}
\begin{tabular}{|c|c|c|c|c|c|}\hline$k+1$&$k+1$&&$k$&$k$&\\$+$&$+$&$-$&$+$&+&$-$ \\\hline\end{tabular}
\end{center}
\begin{center}
\begin{tabular}{|c|c|c|c|}
\hline\small{$k+1$}&\small{$k+1$}&&\multirow{2}{*}{\small{$g_Q(k)^2(z_iz_j)^{-n_Q\d(k)}(-v+\zn)$}}\\$+$&$+$&$-$&\\\hline
\end{tabular}
\hspace{0.5cm}
\begin{tabular}{|c|c|c|c|}
\hline\small{$k$}&\small{$k$}&&\multirow{2}{*}{\small{$g_Q(k)^2(z_iz_j)^{-n_Q\d(k)}(-v+\zn)$}}\\$+$&$+$&$-$&\\\hline
\end{tabular}
\end{center}

\textbf{Case 25b:} $k\nequiv \ell\pmod{n_Q}$.

\vspace{-0.7cm}
\begin{center}
\begin{tabular}{|c|c|c|c|c|c|}\hline$k+1$&$\ell+1$&&$\ell$&$k$&\\$+$&$+$&$-$&$+$&+&$-$ \\\hline\end{tabular}
\end{center}
\begin{center}
\begin{tabular}{|c|c|c|c|}
\hline\small{$\ell+1$}&\small{$k+1$}&&\multirow{2}{*}{\small{$g_Q(k)g_Q(\ell)(1-v)z_i^{-n_Q\d(k)}z_j^{-n_Q\d(\ell)}\cdot\begin{cases}\zn &\ell+1>k+1\\1 &\ell+1<k+1 \end{cases}$}}\\$+$&$+$&$-$&\\\hline
\end{tabular}
\hspace{0.5cm}
\begin{tabular}{|c|c|c|c|}
\hline\small{$\ell$}&\small{$k$}&&\multirow{2}{*}{\small{$g_Q(k)g_Q(\ell)(1-v)z_j^{-n_Q\d(k)}z_i^{-n_Q\d(\ell)}\cdot\begin{cases}\zn &\ell>k\\1 &\ell<k \end{cases}$}}\\$+$&$+$&$-$&\\\hline
\end{tabular}
\end{center}

\textbf{Case 25c:} $k\nequiv \ell\pmod{n_Q}$.

\vspace{-0.7cm}
\begin{center}
\begin{tabular}{|c|c|c|c|c|c|}\hline$k+1$&$\ell+1$&&$k$&$\ell$&\\$+$&$+$&$-$&$+$&+&$-$ \\\hline\end{tabular}
\end{center}
\begin{center}
\begin{tabular}{|c|c|c|c|}
\hline\small{$k+1$}&\small{$\ell+1$}&&\multirow{2}{*}{\small{$g_Q(k)g_Q(\ell)g_Q(\ell-k)z_j^{-n_Q\d(k)}z_i^{-n_Q\d(\ell)}(1-\zn)$}}\\$+$&$+$&$-$&\\\hline
\end{tabular}
\hspace{0.5cm}
\begin{tabular}{|c|c|c|c|}
\hline\small{$\ell$}&\small{$k$}&&\multirow{2}{*}{\small{$g_Q(k)g_Q(\ell)g_Q(\ell-k)z_j^{-n_Q\d(k)}z_i^{-n_Q\d(\ell)}(1-\zn)$}}\\$+$&$+$&$-$&\\\hline
\end{tabular}
\end{center}

\textbf{Case 26:} $(+,+,-,-,-,-)$. There are no admissible configurations in this case.\\

\textbf{Case 27a:} $k\nequiv 0\pmod{n_Q}$.

\vspace{-0.7cm}
\begin{center}
\begin{tabular}{|c|c|c|c|c|c|}\hline$k+1$&$0$&&$0$&$k$&\\$+$&$-$&$-$&$-$&+&$-$ \\\hline\end{tabular}
\end{center}
\begin{center}
\begin{tabular}{|c|c|c|c|}
\hline\small{$0$}&\small{$k+1$}&&\multirow{2}{*}{\small{$g_Q(k)(1-v)$}}\\$-$&$+$&$-$&\\\hline
\end{tabular}
\hspace{0.5cm}
\begin{tabular}{|c|c|c|c|}
\hline\small{$0$}&\small{$k$}&&\multirow{2}{*}{\small{$g_Q(k)(1-v)$}}\\$-$&$+$&$-$&\\\hline\end{tabular}
\end{center}

\textbf{Case 27b:}

\vspace{-0.7cm}
\begin{center}
\begin{tabular}{|c|c|c|c|c|c|}\hline$1$&$0$&&$0$&$0$&\\$+$&$-$&$-$&$-$&+&$-$ \\\hline\end{tabular}
\end{center}
\begin{center}
\begin{tabular}{|c|c|c|c|}
\hline\small{$0$}&\small{$1$}&&\multirow{2}{*}{\small{$-v(1-v)z_i^{-n_Q}$}}\\$-$&$+$&$-$&\\\hline\small{$1$}&\small{$0$}&&\multirow{2}{*}{\small{$v(1-v)z_i^{-n_Q}(1-\zn)$}}\\$+$&$-$&$+$&\\\hline
\end{tabular}
\hspace{0.5cm}
\begin{tabular}{|c|c|c|c|}
\hline\small{$0$}&\small{$0$}&&\multirow{2}{*}{\small{$-v(1-v)z_j^{-n_Q}$}}\\$-$&$+$&$-$&\\\hline\end{tabular}
\end{center}

\textbf{Case 28:}

\vspace{-0.7cm}
\begin{center}
\begin{tabular}{|c|c|c|c|c|c|}\hline$1$&$0$&&$0$&$0$&\\$+$&$-$&$-$&$+$&$-$&$-$ \\\hline\end{tabular}
\end{center}
\begin{center}
\begin{tabular}{|c|c|c|c|}
\hline\small{$1$}&\small{$0$}&&\multirow{2}{*}{\small{$-v^2z_j^{-n_Q}(1-\zn)$}}\\$+$&$-$&$-$&\\\hline
\end{tabular}
\hspace{0.5cm}
\begin{tabular}{|c|c|c|c|}
\hline\small{$0$}&\small{$0$}&&\multirow{2}{*}{\small{$-v^2z_j^{-n_Q}(1-\zn)$}}\\$-$&$+$&$-$&\\\hline\end{tabular}
\end{center}

\textbf{Case 29a:} $k\nequiv 0\pmod{n_Q}$.

\vspace{-0.7cm}
\begin{center}
\begin{tabular}{|c|c|c|c|c|c|}\hline$0$&$k+1$&&$0$&$k$&\\$-$&$+$&$-$&$-$&+&$-$ \\\hline\end{tabular}
\end{center}
\begin{center}
\begin{tabular}{|c|c|c|c|}
\hline\small{$0$}&\small{$k+1$}&&\multirow{2}{*}{\small{$g_Q(k)(1-\zn)$}}\\$-$&$+$&$-$&\\\hline
\end{tabular}
\hspace{0.5cm}
\begin{tabular}{|c|c|c|c|}
\hline\small{$k$}&\small{$0$}&&\multirow{2}{*}{\small{$g_Q(k)(1-\zn)$}}\\$+$&$-$&$-$&\\\hline\end{tabular}
\end{center}

\textbf{Case 29b:}

\vspace{-0.7cm}
\begin{center}
\begin{tabular}{|c|c|c|c|c|c|}\hline$0$&$1$&&$0$&$a$&\\$-$&$+$&$-$&$-$&+&$-$ \\\hline\end{tabular}
\end{center}
\begin{center}
\begin{tabular}{|c|c|c|c|}
\hline\small{$0$}&\small{$1$}&&\multirow{2}{*}{\small{$-vz_i^{-n_Q}(1-\zn)$}}\\$-$&$+$&$-$&\\\hline\small{$1$}&\small{$0$}&&\multirow{2}{*}{\small{$(1-v)^2z_i^{-n_Q}\zn$}}\\$+$&$-$&$+$&\\\hline
\end{tabular}
\hspace{0.5cm}
\begin{tabular}{|c|c|c|c|}
\hline\small{$0$}&\small{$0$}&&\multirow{2}{*}{\small{$-vz_i^{-n_Q}(1-\zn)$}}\\$+$&$-$&$-$&\\\hline\small{$0$}&\small{$0$}&&\multirow{2}{*}{\small{$(1-v)^2z_j^{-n_Q}$}}\\$-$&$+$&$+$&\\\hline
\end{tabular}
\end{center}

\textbf{Case 30a:} $k\nequiv 0\pmod{n_Q}$.

\vspace{-0.7cm}
\begin{center}
\begin{tabular}{|c|c|c|c|c|c|}\hline$0$&$k+1$&&$k$&$0$&\\$-$&$+$&$-$&$+$&$-$&$-$ \\\hline\end{tabular}
\end{center}
\begin{center}
\begin{tabular}{|c|c|c|c|}
\hline\small{$k+1$}&\small{$0$}&&\multirow{2}{*}{\small{$g_Q(k)(1-v)\zn$}}\\$+$&$-$&$-$&\\\hline
\end{tabular}
\hspace{0.5cm}
\begin{tabular}{|c|c|c|c|}
\hline\small{$k$}&\small{$0$}&&\multirow{2}{*}{\small{$g_Q(k)(1-v)\zn$}}\\$+$&$-$&$-$&\\\hline\end{tabular}
\end{center}

\textbf{Case 30b:}

\vspace{-0.7cm}
\begin{center}
\begin{tabular}{|c|c|c|c|c|c|}\hline$0$&$1$&&$0$&$0$&\\$-$&$+$&$-$&$+$&$-$&$-$ \\\hline\end{tabular}
\end{center}
\begin{center}
\begin{tabular}{|c|c|c|c|}
\hline\small{$1$}&\small{$0$}&&\multirow{2}{*}{\small{$-v(1-v)z_j^{-n_Q}\zn$}}\\$+$&$-$&$-$&\\\hline
\end{tabular}
\hspace{0.5cm}
\begin{tabular}{|c|c|c|c|}
\hline\small{$0$}&\small{$0$}&&\multirow{2}{*}{\small{$-v(1-v)z_i^{-n_Q}\zn$}}\\$+$&$-$&$-$&\\\hline\small{$0$}&\small{$0$}&&\multirow{2}{*}{\small{$v(1-v)z_j^{-n_Q}(1-\zn)$}}\\$-$&$+$&$+$&\\\hline
\end{tabular}
\end{center}

\textbf{Case 31:} $(-,-,-,+,+,-)$. There are no admissible configurations in this case.\\

\textbf{Case 32:}

\vspace{-0.7cm}
\begin{center}
\begin{tabular}{|c|c|c|c|c|c|}\hline$0$&$0$&&$0$&$0$&\\$-$&$-$&$-$&$-$&$-$&$-$ \\\hline\end{tabular}
\end{center}
\begin{center}
\begin{tabular}{|c|c|c|c|}
\hline\small{$0$}&\small{$0$}&&\multirow{2}{*}{\small{$1-v\zn$}}\\$-$&$-$&$-$&\\\hline
\end{tabular}
\hspace{0.5cm}
\begin{tabular}{|c|c|c|c|}
\hline\small{$0$}&\small{$0$}&&\multirow{2}{*}{\small{$1-v\zn$}}\\$-$&$-$&$-$&\\\hline\end{tabular}
\end{center}

\bibliography{mybib}

\begin{thebibliography}{10}

\bibitem{BLS}
{\sc W.~D. Banks, J.~Levy, and M.~R. Sepanski}, {\em Block-compatible
  metaplectic cocycles}, J. Reine Angew. Math., 507 (1999), pp.~131--163.

\bibitem{BStriangle}
{\sc V.~V. Bazhanov and A.~G. Shadrikov}, {\em Trigonometric solutions of the
  triangle equations, and simple {L}ie superalgebras}, Teoret. Mat. Fiz., 73
  (1987), pp.~402--419.

\bibitem{BBBIce}
{\sc B.~Brubaker, V.~Buciumas, and D.~Bump}, {\em A yang-baxter equation for
  metaplectic ice}, to appear in Comm. in Number Theory and Physics,  (2016).

\bibitem{BBBFhecke}
{\sc B.~Brubaker, V.~Buciumas, D.~Bump, and S.~Friedberg}, {\em Hecke modules
  from metaplectic ice}, Selecta Math. (N.S.), 24 (2018), pp.~2523--2570.

\bibitem{MetIce}
{\sc B.~Brubaker, D.~Bump, G.~Chinta, S.~Friedberg, and P.~E. Gunnells}, {\em
  Metaplectic ice}, in Multiple {D}irichlet series, {L}-functions and
  automorphic forms, vol.~300 of Progr. Math., Birkh\"{a}user/Springer, New
  York, 2012, pp.~65--92.

\bibitem{BBF}
{\sc B.~Brubaker, D.~Bump, and S.~Friedberg}, {\em Weyl group multiple
  {D}irichlet series, {E}isenstein series and crystal bases}, Ann. of Math.
  (2), 173 (2011), pp.~1081--1120.

\bibitem{BBFbook}
\leavevmode\vrule height 2pt depth -1.6pt width 23pt, {\em Weyl group multiple
  {D}irichlet series: type {A} combinatorial theory}, vol.~175 of Annals of
  Mathematics Studies, Princeton University Press, Princeton, NJ, 2011.

\bibitem{BD}
{\sc J.-L. Brylinski and P.~Deligne}, {\em Central extensions of reductive
  groups by {$\bold K_2$}}, Publ. Math. Inst. Hautes \'{E}tudes Sci.,  (2001),
  pp.~5--85.

\bibitem{BFHeisAnnals}
{\sc D.~Bump, S.~Friedberg, and J.~Hoffstein}, {\em Eisenstein series on the
  metaplectic group and nonvanishing theorems for automorphic {$L$}-functions
  and their derivatives}, Ann. of Math. (2), 131 (1990), pp.~53--127.

\bibitem{BScrystal}
{\sc D.~Bump and A.~Schilling}, {\em Crystal bases}, World Scientific
  Publishing Co. Pte. Ltd., Hackensack, NJ, 2017.
\newblock Representations and combinatorics.

\bibitem{CassShal}
{\sc W.~Casselman and J.~Shalika}, {\em The unramified principal series of
  {$p$}-adic groups. {II}. {T}he {W}hittaker function}, Compositio Math., 41
  (1980), pp.~207--231.

\bibitem{CP}
{\sc V.~Chari and A.~Pressley}, {\em A guide to quantum groups}, Cambridge
  University Press, Cambridge, 1994.

\bibitem{Chintabiquad}
{\sc G.~Chinta}, {\em Mean values of biquadratic zeta functions}, Invent.
  Math., 160 (2005), pp.~145--163.

\bibitem{COcassshal}
{\sc G.~Chinta and O.~Offen}, {\em A metaplectic {C}asselman-{S}halika formula
  for {${\rm GL}_r$}}, Amer. J. Math., 135 (2013), pp.~403--441.

\bibitem{Drin}
{\sc V.~G. Drinfeld}, {\em Quasi-{H}opf algebras}, Algebra i Analiz, 1 (1989),
  pp.~114--148.

\bibitem{FR}
{\sc I.~B. Frenkel and N.~Y. Reshetikhin}, {\em Quantum affine algebras and
  holonomic difference equations}, Comm. Math. Phys., 146 (1992), pp.~1--60.

\bibitem{GanGao}
{\sc W.~T. Gan and F.~Gao}, {\em The langlands-weissman program for
  brylinski-deligne extensions}, 2014.

\bibitem{Ast}
{\sc W.~T. Gan, F.~Gao, and M.~H. Weissman}, {\em L-groups and the langlands
  program for covering groups: a historical introduction}.
\newblock to appear in Asterisque, 2017.

\bibitem{Jimbo}
{\sc M.~Jimbo}, {\em Quantum {$R$} matrix related to the generalized {T}oda
  system: an algebraic approach}, in Field theory, quantum gravity and strings
  ({M}eudon/{P}aris, 1984/1985), vol.~246 of Lecture Notes in Phys., Springer,
  Berlin, 1986, pp.~335--361.

\bibitem{KP}
{\sc D.~A. Kazhdan and S.~J. Patterson}, {\em Metaplectic forms}, Inst. Hautes
  \'Etudes Sci. Publ. Math.,  (1984), pp.~35--142.

\bibitem{Kojima}
{\sc T.~Kojima}, {\em Diagonalization of transfer matrix of supersymmetry
  {$U_q(\widehat{sl}(M+1|N+1))$} chain with a boundary}, J. Math. Phys., 54
  (2013), pp.~043507, 40.

\bibitem{Littelmann}
{\sc P.~Littelmann}, {\em Cones, crystals, and patterns}, Transform. Groups, 3
  (1998), pp.~145--179.

\bibitem{Matsumoto}
{\sc H.~Matsumoto}, {\em Sur les sous-groupes arithm\'{e}tiques des groupes
  semi-simples d\'{e}ploy\'{e}s}, Ann. Sci. \'{E}cole Norm. Sup. (4), 2 (1969),
  pp.~1--62.

\bibitem{McNcrystal}
{\sc P.~J. McNamara}, {\em Metaplectic {W}hittaker functions and crystal
  bases}, Duke Math. J., 156 (2011), pp.~1--31.

\bibitem{McNrep}
\leavevmode\vrule height 2pt depth -1.6pt width 23pt, {\em Principal series
  representations of metaplectic groups over local fields}, in Multiple
  {D}irichlet series, {L}-functions and automorphic forms, vol.~300 of Progr.
  Math., Birkh\"{a}user/Springer, New York, 2012, pp.~299--327.

\bibitem{McNCS}
\leavevmode\vrule height 2pt depth -1.6pt width 23pt, {\em The metaplectic
  {C}asselman-{S}halika formula}, Trans. Amer. Math. Soc., 368 (2016),
  pp.~2913--2937.

\bibitem{MW}
{\sc C.~M\oe~glin and J.-L. Waldspurger}, {\em Spectral decomposition and
  {E}isenstein series}, vol.~113 of Cambridge Tracts in Mathematics, Cambridge
  University Press, Cambridge, 1995.
\newblock Une paraphrase de l'\'{E}criture [A paraphrase of Scripture].

\bibitem{Neukirch}
{\sc J.~Neukirch}, {\em Algebraic number theory}, vol.~322 of Grundlehren der
  Mathematischen Wissenschaften [Fundamental Principles of Mathematical
  Sciences], Springer-Verlag, Berlin, 1999.
\newblock Translated from the 1992 German original and with a note by Norbert
  Schappacher, With a foreword by G. Harder.

\bibitem{PS}
{\sc J.~H.~H. Perk and C.~L. Schultz}, {\em New families of commuting transfer
  matrices in {$q$}-state vertex models}, Phys. Lett. A, 84 (1981),
  pp.~407--410.

\bibitem{Res}
{\sc N.~Reshetikhin}, {\em Multiparameter quantum groups and twisted
  quasitriangular {H}opf algebras}, Lett. Math. Phys., 20 (1990), pp.~331--335.

\bibitem{Shintani}
{\sc T.~Shintani}, {\em On an explicit formula for class-{$1$} ``{W}hittaker
  functions'' on {$GL_{n}$} over {$P$}-adic fields}, Proc. Japan Acad., 52
  (1976), pp.~180--182.

\bibitem{Weil}
{\sc A.~Weil}, {\em Sur certains groupes d'op\'{e}rateurs unitaires}, Acta
  Math., 111 (1964), pp.~143--211.

\bibitem{Yamane}
{\sc H.~Yamane}, {\em Quantized enveloping algebras associated with simple
  {L}ie superalgebras and their universal {$R$}-matrices}, Publ. Res. Inst.
  Math. Sci., 30 (1994), pp.~15--87.

\bibitem{Zhang}
{\sc R.~B. Zhang}, {\em Structure and representations of the quantum general
  linear supergroup}, Comm. Math. Phys., 195 (1998), pp.~525--547.

\end{thebibliography}
\bibliographystyle{siam}

\end{document}